\numberwithin{equation}{section}
\newtheorem{theorem}{Theorem}[section]
\newtheorem{lemma}[theorem]{Lemma}
\newtheorem{proposition}[theorem]{Proposition}
\newtheorem{definition}[theorem]{Definition}
\newtheorem{corollary}[theorem]{Corollary}
\newtheorem{remark}[theorem]{Remark}
\newtheorem*{theorem*}{Conjecture}
\newtheorem{theo}{Theorem}
\newtheoremstyle{remarkstyle}
{}{}{}{ }{\bfseries}{.}{ }{\theoremname{#1}\theoremnumber{ #2}\theoremnote{ (#3)}}
\theoremstyle{remarkstyle}
\newcommand{\N}{\mathbb N}
\newcommand{\R}{\mathbb R}
\title[Ground state energy for NLS with competing nonlinearities]{Ground state energy threshold and blow-up for NLS with competing nonlinearities} 
\author[J. Bellazzini]{Jacopo Bellazzini}
\address[J. Bellazzini]{Dipartimento di Matematica, Universit\`a Degli Studi di Pisa, Largo Bruno Pontecorvo, 5, 56127, Pisa, Italy}
\email{jacopo.bellazzini@unipi.it}
\author[L. Forcella]{Luigi Forcella}
\address[L. Forcella]{Department of Mathematics, Heriot-Watt University, and The Maxwell Institute for the Mathematical Sciences, Edinburgh, EH14 4AS, United Kingdom}
\email{l.forcella@hw.ac.uk}
\author[V. Georgiev]{Vladimir Georgiev}
\address[V. Georgiev]{Dipartimento di Matematica, Universit\`a Degli Studi di Pisa, Largo Bruno Pontecorvo, 5, 56127, Pisa, Italy, and Faculty of Science and Engineering, Waseda University, 3-4-1, Okubo, Shinjuku-ku, Tokyo 169-8555, Japan, and IMICBAS, Acad. Georgi Bonchev Str., Block 8, 1113 Sofia, Bulgaria}
\email{georgiev@dm.unipi.it}
\subjclass[2000]{35Q55, 35B40, 35J91, 35J20}
\keywords{NLS equations, combined nonlinearities, ground states, normalized solutions, blow-up}
\begin{document}
\begin{abstract}
We consider the nonlinear Schr\"odinger equation with combined nonlinearities, where the leading term is an intracritical focusing power-type nonlinearity, and the perturbation is given by a  power-type defocusing one. We completely answer the question wether the ground state energy, which is a threshold between global existence and formation of singularities, is achieved. For any prescribed mass, for mass-supercritical or mass-critical defocusing perturbations, the ground state energy is achieved by a radially symmetric and decreasing solution to the associated stationary equation. For mass-subcritical perturbations, we show the existence of a critical prescribed mass, precisely the mass of the unique, static, positive solution to the associated elliptic equation, such that the ground state energy is achieved for any mass equal or smaller than the critical one. Moreover, the ground state energy is not achieved for mass larger than the critical one. As a byproduct of the variational characterization of the ground state energy, we prove the existence of blowing-up solutions in finite time, for any energy below the ground state energy threshold.\end{abstract}
	
\maketitle

\section{Introduction}
In this paper, we consider the following Schr\"odinger equation with competing nonlinearities: 
\begin{equation}\label{cNLS}
i\partial_{t}u+\Delta u=\lambda_1|u|^{q-1}u+\lambda_2|u|^{p-1}u, \quad (t,x)\in\R\times\R^d.
\end{equation}
Equation \eqref{cNLS} is a nonlinear Schr\"odinger equation with two nonlinearities of pure-power-type, which arises in the description of many physical models, determined by different configurations of the parameters $d,p,q, \lambda_1, \lambda_2$ appearing above. See \cite{GFTC, KA, Malo, PAK} and references therein for physical insights and justifications of the models. From a mathematical point of view, after the pioneering works by Zhang \cite{Zhang}, Tao, Vi\c san, and Zhang \cite{TVZ}, and Miao, Xu, and Zhao \cite{MXZCMP}, there has been an always increasing interest in NLS equations with combined nonlinearities. We refer the reader to \cite{Cheng, CaSp, LRN, KOPV, SJFA,SJDE} for extended discussions on the nowadays known results for \eqref{cNLS} for different choices of the structural constants $d,p,q,\lambda_1, \lambda_2.$ \\

Along this paper, we are interested in the three dimensional problem $d=3$,  and in exponents satisfying $1<q<p$, while the coefficients $\lambda_{1,2}$ are two real parameters with sign, and in particular $\lambda_1>0$ and $\lambda_2<0.$ Namely, we are concerned with  equation \eqref{cNLS} when the leading order nonlinearity is focusing, and the lower order nonlinearity is defocusing. Moreover, we are primarily interested in a leading order nonlinear term satisfying the following mass/energy intracriticality condition:
\[
p_{mc}:=\frac73<p<5:=p_{ec}, 
\]
where $p_{mc}=\frac73$ represents the mass-critical exponent, and $p_{ec}=5$ stands for the energy-critical exponent. We are therefore treating mass/energy intracritical nonlinearities for the focusing term, and we allow any mass-supercritical, mass-subcritical, or mass-critical perturbation for the defocusing term, subject to the condition that $q<p$. Without loss of generality, we fix the parameters $\lambda_{1,2}$ as being normalized as $\lambda_1=-\lambda_2=1;$ hence, for $(t,x)\in\R\times\R^3,$ the studied equation is
\begin{equation}\label{normNLS}
i\partial_{t}u+\Delta u=|u|^{q-1}u-|u|^{p-1}u,  \quad 1<q<p, \quad p\in\left(\frac73,5\right).
\end{equation}

As we also aim to deal with the dynamics for solutions to the evolution equation \eqref{normNLS}, we consider its associated Cauchy problem in the energy space. Hence, the initial value  problem reads 
\begin{equation}\label{NLS}
\left\{ \begin{aligned}
i\partial_{t}u+\Delta u&=|u|^{q-1}u-|u|^{p-1}u\\ 
u(0,x)&=u_0(x)\in H^1(\R^3)
\end{aligned}\right..
\end{equation}

\noindent Local existence of solutions to \eqref{NLS} is well-known, and we refer the reader to the classical monographs \cite{TC, Tao}. 
It is also well-known that solutions to \eqref{NLS} conserve along the flow the mass and the energy, the latter being defined by
\begin{equation}\label{def:en}
E(u(t)):=\frac{1}{2}\int_{\mathbb R^3}|\nabla u(t)|^2\,dx+\frac{1}{q+1}\int_{\mathbb R^3}|u(t)|^{q+1}\,dx-\frac{1}{p+1}\int_{\mathbb R^3}|u(t)|^{p+1}\,dx,
\end{equation}
the former being given by 
\begin{equation*}
M(u(t)):=\int_{\mathbb R^3}|u(t)|^2\,dx.
\end{equation*}
Conservation means  that for any $t\in(-T_{min},T_{max}),$ where $T_{min},T_{max}\in(0,\infty]$ are the minimal and maximal time of existence of the solution, respectively,  
\begin{equation*}\label{energy}
E(t):=E(u(t))=E(u(0))=E(u_0)
\end{equation*}
and  
\begin{equation*}\label{cons:mass}
M(t):=M(u(t))=M(u(0))=M(u_0).
\end{equation*}

\noindent For later purposes, let us write
\begin{equation*}\label{eq:kin-pot}
K(f)=\int_{\mathbb R^3}|\nabla f|^{2}dx,\qquad N_q(f)=\int_{\mathbb R^3}|f|^{q+1}dx, \qquad
N_p(f)=\int_{\mathbb R^3}|f|^{p+1}\,dx.
\end{equation*}
which are, up to the constants appearing in \eqref{def:en}, the kinetic energy functional, the component of the potential energy functional associated to the lower order nonlinear term, and the one associated to the leading order nonlinear term of \eqref{NLS}, respectively. With this terminology at hand, we can rewrite the energy functional as
\[
E(f)=\frac12K(f)+\frac{1}{q+1}N_q(f)-\frac{1}{p+1}N_p(f).
\]
We introduce the following quantity (the so-called Pohozaev functional)
\begin{equation}\label{eq:G}
G(f)=K(f)+\frac{3}{2}\left(\frac{q-1}{q+1}\right)N_q(f)-\frac{3}{2}\left(\frac{p-1}{p+1}\right)N_p(f),
\end{equation}
which will be crucial for the characterization of the dynamics for solutions to \eqref{NLS}, and we give the following useful identity:
\begin{equation}\label{eq:egkn}
E(f)-\frac23 \left(\frac{1}{p-1}\right)G(f)=\frac16\left(\frac{3p-7}{p-1}\right)K(f)+\left(\frac{p-q}{(q+1)(p-1)}\right)N_q(f).
\end{equation}
We note in particular that solutions to  \eqref{NLS} of the form $\psi(x,t)=e^{i \omega t}u(x),$ i.e. standing wave solutions,  fulfil $G(u)=0$.\\

By the fact that the mass is preserved during the time evolution,  we will study the energy functional under the mass constraint $\|u\|_{L^2(\R^3)}^2=\rho^2$. Specifically, we aim to study the existence of a least energy critical point of the energy with prescribed mass.
Notice that due to the focusing mass-supercritical nonlinearity, the energy functional is \emph{not} bounded from below under the mass constraint.
This fact will imply the existence of solutions to \eqref{NLS} that blow-up in finite time, namely $T_{min}<\infty$ and/or $T_{max}<\infty$. Hence, we introduce the following notation for $H^1(\R^3)$ functions having prescribed mass:
\[
S(\rho^2)=\left\{ f \in H^1(\R^3) \hbox{ such that } \|f\|_{L^2(\R^3)}^2=\rho^2\right\}.
\]

We recall the definition  of ground state for functional that are, as in our case, not bounded from below.
\begin{definition}\label{ground-state}
Let $\rho>0$ be arbitrary, we say that $u_{\rho} \in S(\rho^2)$ is a ground state if
\[
E(u_{\rho}) = \inf \left\{E(u) \hbox{ such that }  u \in S(\rho^2) \hbox{ and } E'|_{S(\rho^2)}(u) =0\right\}=\inf_{V(\rho^2)} E,
\]
where the set $V(\rho^2)$ is defined by
\[
V(\rho^2)=\left\{u\in H^1(\mathbb R^3) \hbox{ such that }  \|u\|_{L^2(\mathbb R^3)}^2=\rho^2  \hbox{ and } G(u)=0\right\}.
\]
\end{definition}

In other words, a function $u \in S(\rho^2)$ is called ground state if it minimizes the energy functional $E(u)$ among all the functions which belongs to $V(\rho^2)$ and, in particular, among all standing waves solution of \eqref{NLS}. 
Associated  to the ground state, there is a Lagrange multiplier $\omega \in \R,$ such that the couple $(u, \omega)$ solves
\begin{equation}\label{cNLS2}
-\Delta u+\omega u+|u|^{q-1}u-|u|^{p-1}u=0.
\end{equation}
Observe that, provided $u(x)$ solves \eqref{cNLS2}, the function $\psi(x,t):=e^{i \omega t}u(x)$ is a standing wave solution to \eqref{normNLS}.\\

Given $\rho>0$, a strategy to show that the infimum of the constrained energy is achieved by a ground state, hence the minimization problem actually admits  a minimizer, is to show that the energy functional $E$ defined in \eqref{def:en} has a mountain pass geometry on $S(\rho^2)$ (in the spirit of  \cite{JJ} for constrained minimization problems), and there exists a critical point at the mountain pass energy level. Precisely, we aim to prove  that there exists  a positive parameter $k>0$ such that,  given the set of paths 
\begin{equation}\label{Gamma2}
\Gamma(\rho^2) =\left\{g \in C([0,1];S(\rho^2))  \hbox{ such that } g(0) \in A_{k},E(g(1))<0\right\},
\end{equation}
where 
\[
A_{k}= \left\{f \in S(\rho^2) \hbox{ such that }  \|f\|_X^2\leq k\right\},
\]
and the  space $X$ is the Banach space of function having finite $X$-norm, the latter being defined by 
\begin{equation}\label{spaceX}
\|f\|_X:=K(f)^{\frac 12}+N_q(f)^{\frac{1}{q+1}}=\|f\|_{\dot H^1(\R^3)}+\|f\|_{L^{q+1}(\R^3)},
\end{equation}
then the following critical energy 
\[
I(\rho^2) := \inf_{g \in \Gamma(\rho^2)} \max_{t\in [0,1]}E(g(t))
\]
satisfies
\begin{equation*}\label{gamma}
I(\rho^2)> \max \left\{\max_{g \in \Gamma(\rho^2)}E(g(0)), \max_{g \in \Gamma(\rho^2)}E(g(1))\right\}.
\end{equation*}
It it standard, see \cite{AM}, that the mountain pass geometry induces the existence of a Palais-Smale sequence at the level $I(\rho^2)$. Namely a sequence $\{u_n\}_n \subset S(\rho^2)$ such that, as $n\to\infty,$
\[
E(u_n)=I(\rho^2)+o_n(1), \qquad \|E'|_{S(\rho^2)}(u_n)\|_{H^{-1}(\R^3)}=o_n(1).
\]
If one can show, in addition, the compactness of $\{u_n\}_n$, namely that, possibly up to a subsequence,  $u_n \to u$ in $H^1(\R^3)$, then a critical point is found at the level $I(\rho^2)$. As it will be shown later the mountain pass energy level  $I(\rho^2)$ coincides with $\inf_{V(\rho^2)} E$ and hence a Mountain pass critical point of the constrained energy corresponds to a Ground state solution. \\

For the understanding of the qualitative properties of $I(\rho^2),$ and for the existence result of mountain pass solutions with mass $\rho^2,$ besides \eqref{cNLS2}, it will be also crucial to consider the elliptic equation
\begin{equation}\label{EE}
-\Delta u+|u|^{q-1}u-|u|^{p-1}u=0,
\end{equation}
in $\dot H^1(\mathbb R^3)\cap L^{q+1}(\mathbb R^3)$. 
Equation \eqref{EE} is a specific case of the so called \emph{zero mass case} equation studied by Berestycki and Lions, see  \cite{BL}, and solutions to \eqref{EE} correspond to  \emph{static} solutions to \eqref{normNLS}, i.e. solutions of the equation \eqref{normNLS} which are independent of time.
\\

Before stating our main achievements, we  summarize -- for $1<q<p$, $\frac 73<p<5$ -- the known properties of $I(\rho^2),$ and those of positive solutions to \eqref{EE}, that will be crucial in our first theorem:
\begin{itemize}
\item the function $\rho\mapsto I(\rho^2)$ is continuous in $(0, \infty);$
\item $I(\rho^2) =  \inf_{V(\rho^2)} E$, i.e. the mountain pass energy is the least energy level for standing states, and therefore the ground state energy;
\item a positive, radially symmetric solution $U$ to \eqref{EE} do exists, see \cite{BL}. Furthermore, the positive radially symmetric solution that belongs to $\dot H^{1}(\mathbb R^3)\cap L^{q+1}(\mathbb R^3)$ is unique, see \cite{KZ, LN}, and fulfils  the following precise asymptotic behavior at infinity, see \cite{DS,DSW}:
\begin{equation}\label{eq:decay}
|U(r)|\sim \frac{1}{r^{\alpha}} \quad\hbox{ with }\quad \alpha=\max\left\{\frac{2}{q-1}, 1\right\}.
\end{equation}
\item for small value of $\rho$, the mountain pass energy $I(\rho^2)$ is attained by a function $u_{\rho}\in S(\rho^2)$, with an associated  Lagrange multiplier $\omega_\rho$,  which solves
\begin{equation}\label{eq:giap}
-\Delta u_{\rho}+\omega_{\rho} u_{\rho} +|u_{\rho}|^{q-1}u_{\rho}-|u_{\rho}|^{p-1}u_{\rho}=0.
\end{equation}
Moreover, $u_\rho$ is positive and radially symmetric, see \cite{J20, SJFA,SJDE};
\item for all $\omega\geq 0$ , a unique (see \cite{ST}), positive, radially symmetric solution to \eqref{eq:giap} exists, by minimizing the action functional $S_{\omega}(u)=E(u)+\frac\omega2M(u)$ on the Nehari manifold, in the spirit of \cite{FO}. For $\omega>0$ this solution is strongly unstable
as well as when $\omega=0$ and $1<q<\frac 73$, see \cite{FH}.\\
\end{itemize}

Having the above picture in mind, we are now able to state the main theorem of the paper.
\begin{theo}\label{theorem:main1}
Let $p,q$ be such that $p\in\left(\frac73,5\right)$ and $1<q<p.$ For any fixed $\rho>0,$  the energy functional $E(u)$ has a {\it mountain pass geometry} on $S(\rho^2)$. Furthermore, we have the following characterization. \\
\noindent \textbf{Mass-subcritical perturbations.} For $1<q<\frac{7}{3}$, i.e. when the defocusing term is mass-subcritical, there exists a critical mass $\rho_c>0,$ such that for all $0<\rho\leq \rho_c$  there exists a couple $(u_{\rho}, \omega_{\rho})\in S(\rho^2)\times \R$ such that $e^{i \omega_{\rho} t}u_{\rho}$ is a solution to \eqref{normNLS} with $E(u_{\rho})=I(\rho^2)$. Moreover:
\begin{itemize}
\item[$(i)$] the solution $u_{\rho}$ is radially symmetric, positive and decreasing;
\item[$(ii)$] the function $I(\rho^2)$ is strictly decreasing in $(0, \rho_c],$ with $\displaystyle\lim_{\rho\to 0}I(\rho^2)=+\infty,$ and constant in $[\rho_c, \infty)$; 
\item[$(iii)$] at the critical mass $\rho_c,$ the solution $u_{\rho_c}$  is the unique positive solution $U$ to the static equation \eqref{EE};
\item[$(iv)$] finally, $I(\rho^2)$ is not achieved for $\rho>\rho_c$.
\end{itemize}
\noindent \textbf{Mass-critical or supercritical perturbations.} If $\frac73 \leq q<p$, i.e. when the defocusing term is mass-critical or mass-supercritical, then 
for all $\rho$ there exists a couple $(u_{\rho}, \omega_{\rho})\in S(\rho^2)\times \R$ such that $e^{i \omega_{\rho} t}u_{\rho}$ is a solution to \eqref{normNLS} with 
 $E(u_{\rho})=I(\rho^2)$. Moreover:
\begin{itemize}
\item[$(i)$] the solution $u_{\rho}$ is radially symmetric, positive and decreasing; 
\item[$(ii)$] the function $I(\rho^2)$ is strictly decreasing in $(0, \infty)$ with  $\displaystyle \lim_{\rho\to 0}I(\rho^2)=+\infty$ and $\displaystyle\lim_{\rho\to \infty}I(\rho^2)=E(U)$.
\end{itemize}
\end{theo}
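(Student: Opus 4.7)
My plan is to organize the proof around three blocks: (a) the mountain pass geometry of $E$ on $S(\rho^2)$ together with the identification $I(\rho^2)=\inf_{V(\rho^2)} E$; (b) the construction of a bounded radial Palais--Smale sequence and a compactness argument producing a ground state; (c) the qualitative analysis of $\rho\mapsto I(\rho^2)$, split into the two regimes, with identification of the critical mass in the subcritical case.

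For (a), the key tool will be the $L^2$-preserving dilation $u_\lambda(x):=\lambda^{3/2}u(\lambda x)$, under which
\[
E(u_\lambda)=\frac{\lambda^2}{2}K(u)+\frac{\lambda^{3(q-1)/2}}{q+1}N_q(u)-\frac{\lambda^{3(p-1)/2}}{p+1}N_p(u).
\]
Since $3(p-1)/2>2$ for $p>7/3$, the last term dominates as $\lambda\to\infty$, producing the negative-energy endpoint of a MP path. For small $\|f\|_X^2\leq k$, a Gagliardo--Nirenberg estimate applied to $N_p$ and the control of $N_q$ by the $X$-norm keeps $E$ strictly above a positive level on $A_k$. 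Differentiating gives $G(u)=\tfrac{d}{d\lambda}E(u_\lambda)|_{\lambda=1}$, so a continuity argument forces every $g\in\Gamma(\rho^2)$ to cross $V(\rho^2)$, yielding $I(\rho^2)\geq\inf_{V(\rho^2)} E$; conversely, for $u\in V(\rho^2)$ the function $\lambda\mapsto E(u_\lambda)$ is concave and peaks at $\lambda=1$, so the dilation path itself lies in $\Gamma(\rho^2)$ and realizes the MP level.

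For (b), I would apply the Jeanjean augmented-space machinery from \cite{JJ} to extract a PS sequence $(u_n)\subset S(\rho^2)$ at level $I(\rho^2)$ with the additional property $G(u_n)\to 0$. Then, up to $o(1)$, identity \eqref{eq:egkn} combined with $p>7/3$ and $q<p$ gives
\[
I(\rho^2)\geq \frac{1}{6}\left(\frac{3p-7}{p-1}\right)K(u_n)+\left(\frac{p-q}{(q+1)(p-1)}\right)N_q(u_n),
\]
bounding $(u_n)$ both in $X$ and in $H^1(\R^3)$. Since Schwarz symmetrization preserves $N_q,N_p$ and does not increase $K$, replacing $u_n$ by its decreasing rearrangement followed by a small dilation to restore $G=0$ gives a radial, nonnegative, decreasing PS sequence with no larger energy. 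The compact embedding $H^1_{\mathrm{rad}}(\R^3)\hookrightarrow L^{r+1}(\R^3)$ for $r\in(1,5)$ then yields strong convergence of the potential terms at a weak limit $u_\rho$; a Lagrange multiplier $\omega_\rho$ emerges from the constrained Euler--Lagrange equation, and strict positivity and monotonicity follow from the maximum principle applied to \eqref{cNLS2}.

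For (c), strict monotonicity of $\rho\mapsto I(\rho^2)$ will come from test functions $v(x)=\alpha u(\beta x)$ applied to a minimizer at mass $\rho^2$, tuning $(\alpha,\beta)$ so that the mass strictly increases while $E$ strictly decreases along $V$. The limit $\lim_{\rho\to 0}I(\rho^2)=+\infty$ is immediate from the Gagliardo--Nirenberg lower bound inside $A_k$; in the super/critical case, $\lim_{\rho\to\infty}I(\rho^2)=E(U)$ follows by truncating and $L^2$-renormalizing $U$ and passing to the limit. The principal obstacle is the mass-subcritical case $q<7/3$, where the decay \eqref{eq:decay} makes $M(U)$ finite exactly when $4/(q-1)>3$: I would set $\rho_c^2:=M(U)$, show that below $\rho_c$ the compactness argument of (b) delivers a minimizer, and establish that at $\rho=\rho_c$ the minimizer must be $U$ with Lagrange multiplier $\omega_{\rho_c}=0$. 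The hardest technical point is the non-attainment for $\rho>\rho_c$: the plan is to argue that any putative minimizer would satisfy \eqref{cNLS2} with $\omega_\rho\to 0$ as $\rho\to\rho_c^+$, so by the uniqueness results of \cite{KZ,LN} it would coincide with $U$, contradicting $M(U)=\rho_c^2<\rho^2$. The constancy $I(\rho^2)\equiv E(U)$ above $\rho_c$ is then realized asymptotically by concentrating mass $\rho_c^2$ on $U$ and dispersing the excess $\rho^2-\rho_c^2$ to infinity at vanishing kinetic and potential cost.
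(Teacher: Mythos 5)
There is a genuine gap at the heart of your plan: the monotonicity of $\rho\mapsto I(\rho^2)$, which is exactly the point the whole theorem turns on. Your compactness step (b) cannot close without \emph{strict} monotonicity: after rearranging a minimizing sequence on $V(\rho^2)$ and passing to the radial weak limit $u$, the identity \eqref{eq:egkn} only gives $E(u)\leq I(\rho^2)$ with $u\in V(\mu^2)$ for some $\mu\leq\rho$; to force $\mu=\rho$ (i.e.\ to exclude that part of the mass leaks to infinity and the limit is a minimizer at a strictly smaller mass with the same energy) one needs $I(\mu^2)>I(\rho^2)$ for $\mu<\rho$, as in Lemma \ref{lem:mono}. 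But your proposed proof of strict monotonicity -- two-parameter test functions $\alpha u(\beta x)$ built from a minimizer at mass $\rho^2$ -- is both circular (it presupposes the minimizer whose existence depends on the monotonicity) and false in general: the theorem itself asserts that for $1<q<\frac73$ the function $I(\rho^2)$ is \emph{constant} on $[\rho_c,\infty)$ and not attained there, so no scaling argument can strictly decrease the mountain pass level while increasing the mass for all $\rho$. The mechanism the paper uses, and which your proposal lacks, is the necessary-condition result of Lemma \ref{lem:necessary}: if $I(\rho^2)$ fails to be strictly decreasing on $(0,\rho]$, then at the first mass $\lambda$ where the infimum is attained one has $I(\lambda^2)<I(s^2)$ for $s<\lambda$, hence a ground state $u_0$ exists there, and the fact that $\theta\mapsto E\bigl((\theta u_0)^{\tilde\mu(\theta u_0)}\bigr)$ has an interior minimum at $\theta=1$ forces $K(u_0)+N_q(u_0)-N_p(u_0)=0$, i.e.\ a vanishing Lagrange multiplier, so $u_0$ is a positive radial solution of the zero-mass equation \eqref{EE} with mass $<\rho$. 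Combining this with the uniqueness of $U$ and the decay \eqref{eq:decay} ($U\in L^2$ iff $q<\frac73$) is what yields strict decrease on $(0,\rho_c]$ in the subcritical case and on all of $(0,\infty)$ in the critical/supercritical case; you identify $\rho_c^2=M(U)$ correctly but give no argument replacing this step.

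Two further points in the same vein. For non-attainment when $\rho>\rho_c$, your argument ("$\omega_\rho\to0$ as $\rho\to\rho_c^+$, then invoke uniqueness") does not work: uniqueness in \cite{KZ,LN} concerns the equation with $\omega=0$ exactly, and a limiting statement about multipliers says nothing about a putative minimizer at a fixed $\rho>\rho_c$. The correct route is again the $\theta$-derivative argument: since $I(\rho^2)$ is constant $=E(U)$ on $[\rho_c,\infty)$ (upper bound by your truncation/dispersal construction, lower bound because $V(\rho^2)\subset W$ and $I=\inf_W E=E(U)$ via the Berestycki--Lions/mountain-pass identification \eqref{I-BV}), any minimizer at mass $\rho^2>\rho_c^2$ would have zero Lagrange multiplier, hence be a second positive radial solution of \eqref{EE}, contradicting uniqueness; the same argument identifies $u_{\rho_c}=U$. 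Finally, in (b), Schwarz symmetrization destroys the Palais--Smale property, so you should either rearrange a minimizing sequence on $V(\rho^2)$ (using $I(\rho^2)=\inf_{V(\rho^2)}E$ and checking, via \eqref{eq:egkn} and Proposition \ref{lem:growth}, that the rescaling restoring $G=0$ does not raise the energy) or, as the paper does, keep the PS sequence and recover compactness through the pqr-Lemma, Lieb translation, and Brezis--Lieb splitting; in either case the argument still terminates only through strict monotonicity, which brings you back to the missing lemma above.
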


\begin{figure}[h!]
  \centering
  \begin{subfigure}[b]{0.49\linewidth}
    \includegraphics[width=\linewidth]{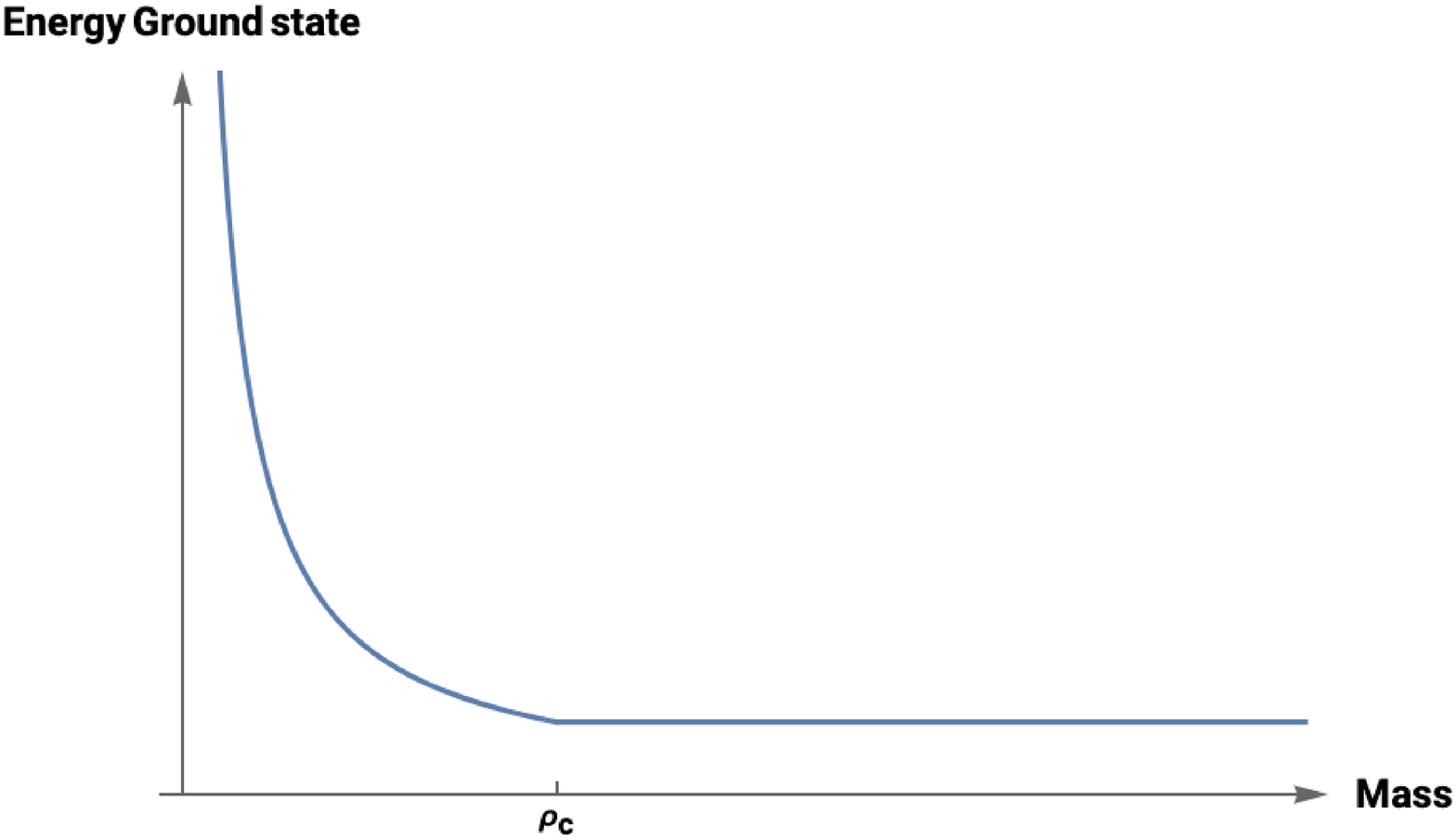}

  \end{subfigure}
  \begin{subfigure}[b]{0.49\linewidth}
    \includegraphics[width=\linewidth]{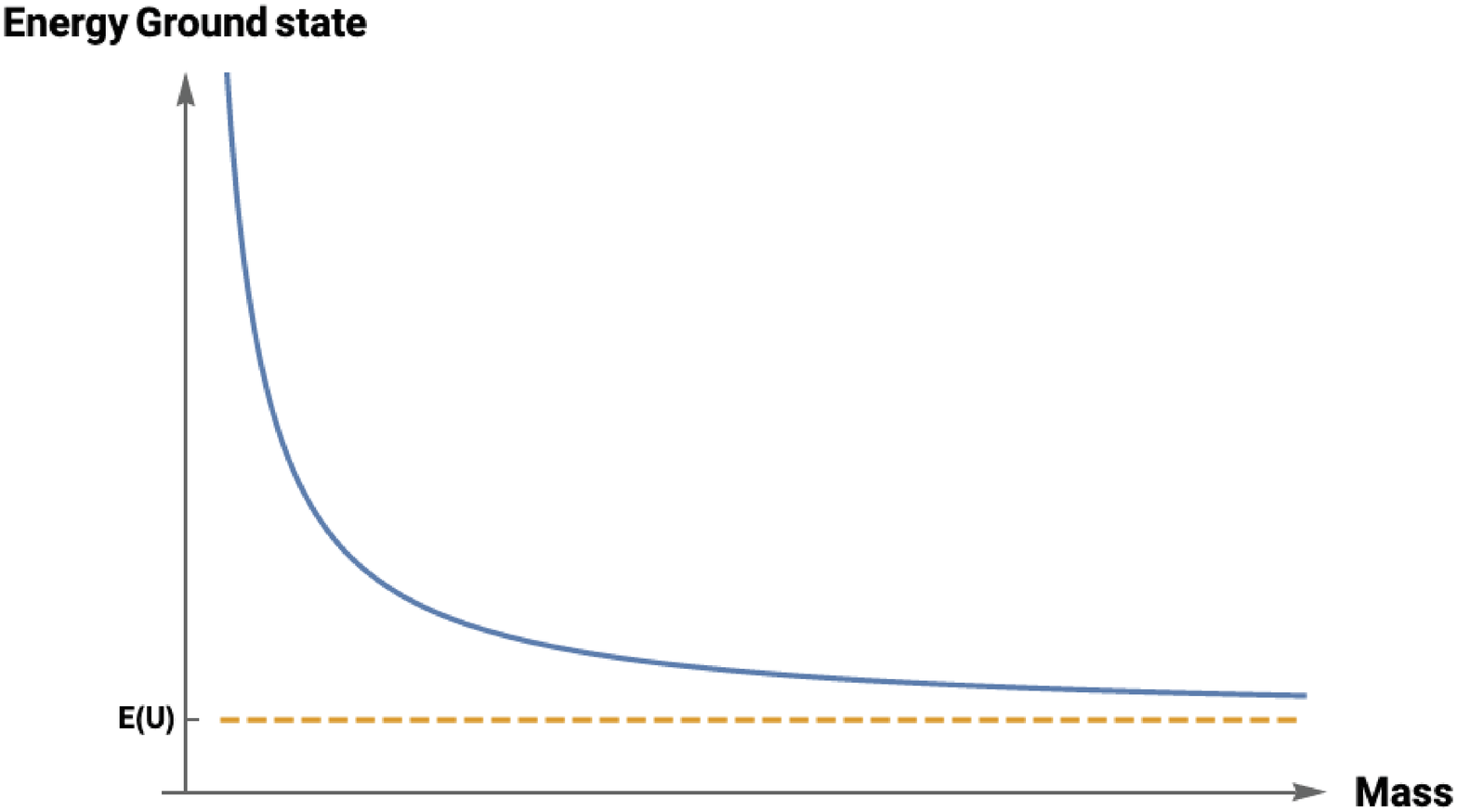}

  \end{subfigure}

  \caption{ Ground state energy as a function of the mass. \texttt{Left.} Mass-subcritical defocusing nonlinearity: the ground state energy is decreasing up to the threshold $\rho_c$,  given by the mass of the positive static solution, and then constant. \texttt{Right.} Mass-critical or mass-supercritical defocusing nonlinearity: the ground state energy is decreasing for all $\rho,$ approaching at infinity the energy of the unique positive static solution.}
 \label{fig:graph}
\end{figure}

Let us now comment on the achievements of Theorem \ref{theorem:main1}. A classical strategy to prove the existence of constrained critical point for mass-supercritical nonlinearities, is to show that $I(\rho^2)$ is a strictly decreasing function. When the latter monotonicity property is not easy to prove, a way to overcame this difficulty is to show that  weak limit  solves
\[
-\Delta u+\omega u +|u|^{p-1}u-|u|^{q-1}u=0
\]
with the corresponding Lagrange multiplier being  positive, see \cite{J20, SJFA,SJDE}. The positivity of the Lagrange multiplier proves that weak convergence to a critical point is in fact strong. For small $\rho$, due to functional inequalities, this positivity property is fulfilled.\\

Our approach is different, and it is not focused on proving that the Lagrange multiplier is positive (which is just a sufficient condition). Our idea  is related to the argument to prove strict sub-additivity for mass-subcritical nonlinearities in the spirit of \cite{BS}.
Given an arbitrary interval $J=(0,\rho]$, we prove in Lemma \ref{lem:necessary} that the existence of a positive radial solution to \eqref{EE}  with mass $\rho_0\in (0, \rho)$, is a \emph{necessary condition} for having \emph{not strict monotonicity} of the
function $I(\rho^2)$. The decay properties of the positive solution to \eqref{EE} given by \eqref{eq:decay} yield informations on the monotonicity of $I(\rho^2)$.
In the  case when  the nonlinearity $|u|^{p-1}u$ is focusing, and mass-supercritical and energy-subcritical, and the nonlinearity $|u|^{q-1}u$ is defocusing and mass-subcritical,  the unique positive solution to \eqref{EE}
belong to $H^1(\R^3),$ and hence the mass of this solution is well defined: $\rho_0=\rho_c$. As a byproduct, we show that  $I(\rho^2)$ is strictly decreasing  as soon as $0<\rho\leq \rho_c$ and constant for $\rho\geq \rho_c$. Indeed, assuming  on  the contrary that $I(\rho^2)$ is not strictly decreasing in $0<\rho\leq \rho_c,$ we deduce the existence of a  solution of \eqref{EE} with mass smaller than $\rho_c$.\\

On the other hand,  when the defocusing nonlinearity $|u|^{q-1}u$ is  mass-critical or mass-supercritical, the unique positive solution to \eqref{EE} does not belong to $H^1(\R^3)$
and hence $I(\rho^2)$ should be decreasing in $0<\rho<\infty$. When $\rho \to \infty,$ the ground state energy converges to the energy of the static solution of \eqref{EE}.
\begin{remark}
It is interesting to notice that in the case when both the nonlinearities are focusing and mass-supercritical, our argument, together with the decay estimate for
the positive solution to zero mass equation 
\begin{equation}\label{eq:FL}
-\Delta u=u^p+u^q
\end{equation}
due to Flucher and M\"uller, see \cite{FM}, permits to conclude that  $I(\rho^2)$ is strictly decreasing for all $\rho\in(0, \infty),$ thus $I(\rho^2)$
is achieved for all $\rho$. Indeed, positive and radially symmetric solutions to \eqref{eq:FL} have a lower bound decay given by $\frac{1}{|x|},$ and this fact
permits to exclude that a weak limit of the $L^2$-constrained  problem could solve \eqref{eq:FL}. With a different approach, in a more general setting, this case has been recently studied in \cite{J20}.
\end{remark}
\begin{remark}
The normalization of the coefficients in front of the two nonlinearities in \eqref{normNLS} do not affect the validity of the main Theorem. Indeed, for general coefficients with sign $\lambda_1>0$ and $\lambda_2<0,$ the achievements in Theorem \ref{theorem:main1} remain valid. 
\begin{remark}
We underline that our approach easily generalizes to arbitrary space dimensions. Indeed, considering again an intracritical focusing
nonlinearity and a defocusing lower order perturbation, the decay for the static solution is explicitly known to be
\end{remark}
\[
|U(r)|\sim \frac{1}{r^{\alpha}} \quad\hbox{ with }\quad \alpha=\max\left\{\frac{2}{q-1}, N-2\right\}.
\]
Following verbatim our approach, we get for instance that for $N\geq 5$, the static solution $U$ is in $L^2$ for all $q$, and hence the  ground state problem behaves as in \textsc{Figure \ref{fig:graph}}, \emph{\texttt{Left}}.
\end{remark}

With the results contained in Theorem \ref{theorem:main1} at hand, we can pass to the dynamical properties of (local) solutions to the time dependent problem \eqref{NLS}.  Specifically,  the ground state energy defines a threshold between global well-posedness and formation of singularities in finite time. Indeed, for an
initial datum $u_0\in H^1(\R^3)$ with mass $\rho_0^2=\|u_0\|_{L^2(\R^3)}^2,$ if the initial energy $E(u_0)<I(\|u_0\|_{L^2(\R^3)}^2)$ and $G(u_0)>0$,  the solution to \eqref{NLS} exists globally in time, i.e. $T_{min}=T_{max}=+\infty.$
This fact follows from the variational characterization of $I(\|u_0\|_{L^2(\R^3)}^2)$ together with \eqref{eq:egkn}.\\

For the blow-up part, some further assumptions on the symmetry of the solutions must be introduced. In what follows, we denote by $\Sigma_{3}$ the space of function with cylindrical symmetry and finite variance in the third direction. More precisely,
\begin{equation}\label{spaceSigma}
\Sigma_3=\{f\in H^1(\R^3) \hbox{ such that } f(x)=f(\bar x,x_3)=f(|\bar x|,x_3), \,f\in L^2(\R^3,x_3^2dx)\},
\end{equation}
where $\bar x=(x_1,x_2)$ and $|\bar x|=(x_1^2+x_2^2)^{1/2}.$

\begin{theo}\label{thm:blowup}
Let us consider an initial datum $u_0\in H^1(\R^3)$ with mass $\rho_0^2=\|u_0\|_{L^2(\R^3)}^2.$ Suppose that $E(u_0)<I(\|u_0\|_{L^2(\R^3)}^2)$ and $G(u_0)<0.$ Provided  one of the following conditions is satisfied, then we have finite time blow-up of the solution $u=u(t,x)$ to \eqref{NLS}, namely $T_{min},T_{max}<\infty:$ 
\begin{itemize}
\item[(i)] $u_0\in L^2(\R^3, |x|^2\,dx);$
\item[(ii)] $u$ is radial;
\item[(iii)] $\frac73 <p\leq 3,$ $1<q<p,$ and $u\in\Sigma_3.$
\end{itemize}
\end{theo}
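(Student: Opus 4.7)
\emph{Strategy.} My plan is a virial convexity argument: in each of the three cases I will construct a nonnegative quantity $V(t)$ satisfying $V''(t) \leq 8\, G(u(t)) + \mathcal{E}(t)$ with a controllable remainder $\mathcal{E}(t)$, establish that $G(u(t)) \leq -\delta < 0$ uniformly in time, and then conclude by integrating twice that $V(t)$ cannot remain nonnegative for all admissible $t$, forcing $T_{min}$ or $T_{max}$ to be finite.

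\emph{Step 1: uniform coercivity $G(u(t)) \leq -\delta$.} The first thing I would prove is an in-time-uniform strict negativity of $G$ along the flow, using the $L^2$-preserving dilation $f^\mu(x) = \mu^{3/2} f(\mu x)$. Since $p > 7/3$, a direct computation shows that the map $\mu \mapsto G(f^\mu)$ changes sign exactly once, at some $\mu^\star = \mu^\star(f) > 0$, and that $\mu \mapsto E(f^\mu)$ attains its maximum at $\mu^\star$. Rewriting \eqref{eq:egkn} gives the auxiliary functional
\[
\mathcal{F}(f) := E(f) - \frac{2}{3(p-1)} G(f) = \frac{3p-7}{6(p-1)} K(f) + \frac{p-q}{(q+1)(p-1)} N_q(f),
\]
and because $3p-7>0$ and $p>q$, the orbit $\mu \mapsto \mathcal{F}(f^\mu)$ is strictly increasing. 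For any $f \in S(\rho^2)$ with $G(f) < 0$ one then has $\mu^\star(f) < 1$, and combining $f^{\mu^\star} \in V(\rho^2)$ with $I(\rho^2) = \inf_{V(\rho^2)} E$ from Theorem \ref{theorem:main1} yields
\[
\mathcal{F}(f) > \mathcal{F}(f^{\mu^\star}) = E(f^{\mu^\star}) \geq I(\rho^2).
\]
A continuity argument along the flow (a crossing $G(u(t_0))=0$ would contradict this bound at time $t_0$, by conservation of mass and energy) shows that $G(u(t)) < 0$ throughout the lifespan; substituting $f = u(t)$ in the displayed inequality gives the sought bound $G(u(t)) \leq -\tfrac{3(p-1)}{2}\bigl(I(\rho_0^2) - E(u_0)\bigr) =: -\delta < 0$.

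\emph{Step 2: the three virial quantities.} For case (i), the weight $|x|^2$ is admissible by hypothesis and the classical virial identity $V''(t) = 8\,G(u(t)) \leq -8\delta$ closes the argument. For case (ii), I would use a radial cutoff $\phi_R(x) = R^2 \phi(|x|/R)$ with $\phi(r) = r^2$ for $r \leq 1$ and $\phi$ bounded, obtaining $V_R''(t) = 8\,G(u(t)) + \mathcal{E}_R(t)$; the remainder is controlled via the radial Strauss decay $\|u(t)\|_{L^\infty(|x| \geq R)} \leq C R^{-1} \|u(t)\|_{H^1}^{1/2}\|u(t)\|_{L^2}^{1/2}$ together with the uniform $H^1$ bound that follows from Step 1 and \eqref{eq:egkn}, so that $|\mathcal{E}_R(t)| \leq 4\delta$ for $R$ large enough. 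For case (iii), I would truncate only transversally and keep the exact second moment along the symmetry axis, working with
\[
V_{R,3}(t) = \int_{\R^3} \phi_R(\bar x) |u(t)|^2\,dx + \int_{\R^3} x_3^2 |u(t)|^2 \, dx,
\]
which is finite because $u \in \Sigma_3$. A two-dimensional radial Strauss-type inequality in $\bar x$, combined with H\"older and Gagliardo--Nirenberg, bounds the transverse remainder, and the assumption $p \leq 3$ is precisely what makes the interpolation exponents close so that the exterior potential energy is absorbed by $\delta$.

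\emph{Main obstacle.} The technically delicate point is the error analysis in case (iii): the two-dimensional radial Strauss bound provides only $|\bar x|^{-1/2}$ decay, the transverse truncation couples nontrivially with the $x_3$-moment, and both nonlinear integrals $\int |u|^{p+1}$ and $\int |u|^{q+1}$ on the exterior $\{|\bar x| > R\}$ must be simultaneously dominated by the slack $\delta$, uniformly in $t$. The restriction $p \leq 3$ arises exactly where the H\"older-Gagliardo--Nirenberg chain that controls $\int_{|\bar x| > R} |u|^{p+1}$ by $\|u\|_{L^\infty(|\bar x|>R)}^{p-3}$ and the conserved quantities becomes lossy for $p > 3$; this is where I would concentrate the bulk of the technical work.
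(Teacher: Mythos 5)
Your Step 1 is correct, and in fact it is a slightly cleaner route to the paper's Lemma \ref{lem:2.2}: instead of the concavity-plus-mean-value argument, you use that $\mu\mapsto \mathcal{F}(u^\mu)=E(u^\mu)-\tfrac{2}{3(p-1)}G(u^\mu)$ is increasing together with $\tilde\mu(u(t))<1$, which gives the uniform bound $G(u(t))\le -\delta$ directly. Case (i) then follows exactly as in the paper. The problem lies in the error analysis of cases (ii) and (iii).

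There you invoke ``the uniform $H^1$ bound that follows from Step 1 and \eqref{eq:egkn}'' to conclude $|\mathcal{E}_R(t)|\le 4\delta$ for $R$ large. No such bound exists, and it does not follow from \eqref{eq:egkn}: with $G(u(t))\le-\delta$, that identity yields
\[
\frac{3p-7}{6(p-1)}K(u(t))+\frac{p-q}{(q+1)(p-1)}N_q(u(t))=E(u_0)-\frac{2}{3(p-1)}G(u(t))\ \geq\ E(u_0)+\frac{2\delta}{3(p-1)},
\]
i.e.\ a \emph{lower} bound on the kinetic energy, not an upper one; and no a priori upper bound can be assumed, since the very statement you are proving forces $\|\nabla u(t)\|_{L^2}$ to leave every bounded set as $t$ approaches the blow-up time. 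After the radial Strauss estimate the truncated-virial remainder is of the form $CR^{-2}+CR^{-(q-1)}\bigl(\|\nabla u(t)\|_{L^2}^{(p-1)/2}+\|\nabla u(t)\|_{L^2}^{(q-1)/2}\bigr)$ (and $R^{-(p-1)/2}\|\nabla u(t)\|_{L^2}^{p-1}$ in the cylindrical case), so it cannot be dominated by a fixed multiple of $\delta$ uniformly in $t$ merely by choosing $R$ large. The missing ingredient is the refined coercivity of Lemma \ref{lem:2.3}, $G(u(t))\le-\epsilon\|u(t)\|_{\dot H^1}^2$, obtained by feeding Lemma \ref{lem:2.2} back into \eqref{eq:egkn}, together with the lower bound $\inf_t\|u(t)\|_{\dot H^1}\ge c$ of Corollary \ref{coro1}: after Young's inequality the gradient-dependent error $o_R(1)\|\nabla u(t)\|_{L^2}^2$ is then absorbed into a fraction of $8G(u(t))$, leaving $V_{\phi_R}''(t)\lesssim-1$ uniformly in $t$. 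With this replacement your outline of (ii) and (iii) coincides with the paper's proof (2D Strauss in $\bar x$, one-dimensional Gagliardo--Nirenberg in $x_3$, the restriction $p\le3$ entering through the H\"older exponent $4/(3-p)$); also note that the exterior bound involves $\|h_R^{1/(p-1)}u\|_{L^\infty}^{p-1}$, not a power $p-3$ of the sup-norm as written in your last paragraph.
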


We point-out that Theorem \ref{theorem:main1} is closely related to the  above Theorems \ref{thm:blowup} by the fact that, given an initial datum $u_0,$ for $\rho^2=\|u_0\|_{L^2(\R^3)}^2,$ the mountain pass energy level $I(\rho^2)$ gives an energy threshold to get global well-posedness or finite time blow-up for the solution to \eqref{NLS}, provided $G(u_0)$ has sign, and at least  one of the conditions $(i)$-$(iii)$ holds true for the blow-up part. It is worth mentioning that, once fixed the power of the focusing term in \eqref{NLS}, the qualitative behavior of the energy threshold $I(\rho^2)$ is only determined  by the property of the defocusing term to be mass-subcritical or not.
\begin{remark} We mention that $(ii)$-$(iii)$ of Theorem \ref{thm:blowup}, extend the blow-up results in \cite{Cheng, FH}. 
\end{remark}

We conclude this introduction by listing some notation used along the paper.
\subsection{Notations.} The $L^r(\R^3)$ spaces, with $r\in[1,\infty],$  denote the usual Lebesgue spaces, and $H^1(\R^3)$ stands for the usual Sobolev space of functions in $L^2(\R^3)$ with $\nabla u\in L^2(\R^3).$ With $\dot H^1(\R^3)$ we refer to its homogeneous version, and by $H^{-1}(\R^3)$ we denote the dual of $H^1(\R^3)$. As we work in the three-dimensional space, we drop the notation $\R^3,$ unless when needed to avoid confusion. For an analogous reason, we simply  write $\int\cdot\, dx$ when the domain of integration is the whole space. The space $X,$ introduced in \eqref{spaceX}, denotes the Banach space endowed with the norm $\|\cdot\|_X=\|\cdot\|_{\dot H^1}+\|\cdot\|_{L^{q+1}},$ while the space $\Sigma_3$ denotes the subspace of $H^1$ consisting of functions which are radial with respect to the $(x_1,x_2)$ components of $\R^3,$ and with finite variance with respect to the $x_3$-direction, see \eqref{spaceSigma}.

\section{Variational structure of the energy functional}\label{sec:3}

We analyze the geometry of the energy functional $E(u)$ constrained to $S(\rho^2),$ and to this aim we introduce the $L^2$-preserving  scaling
\begin{equation}\label{eq:scaling}
u^\mu(x):=\mu^{3/2}u(\mu x), \quad \mu>0.
\end{equation}
We recall the definition of the set $V(\rho^2):$
\[
V(\rho^2)=\left\{u\in H^1\quad \hbox{ such that } \quad \|u\|_{L^2}^2=\rho^2  \hbox{ and } G(u)=0\right\},
\]
where $G$ is defined in \eqref{eq:G}. In order to prove the fundamental Proposition below, in particular Proposition \ref{lem:growth}, point $(iii)$, we will use the following results. Although they are elementary, they are not evident facts. Thus, we provide here a proof for sake of completeness.

\begin{lemma}\label{lemmino1}
Let $z\in (0, \infty)$ and $f=az^2+b z^{\alpha}-cz^{\beta}$ with $b>0$, $c>0,$  $0<\alpha<2$, and $\beta>2$. Then $ f^\prime(z)>zf^{\prime\prime}(z).$
\end{lemma}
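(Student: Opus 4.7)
The approach is a direct computation: differentiate $f$ twice, form the combination $f'(z)-zf''(z)$, and observe that the quadratic term cancels identically while the remaining two terms are manifestly positive under the given assumptions.

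First I would write down
\begin{equation*}
f'(z)=2az+b\alpha z^{\alpha-1}-c\beta z^{\beta-1},\qquad f''(z)=2a+b\alpha(\alpha-1)z^{\alpha-2}-c\beta(\beta-1)z^{\beta-2},
\end{equation*}
so that
\begin{equation*}
zf''(z)=2az+b\alpha(\alpha-1)z^{\alpha-1}-c\beta(\beta-1)z^{\beta-1}.
\end{equation*}
The key observation is that for a pure quadratic $az^2$ one has $(az^2)'-z(az^2)''=2az-2az=0$, so the coefficient $a$ (whose sign is not prescribed) drops out entirely and does not enter the inequality. After this cancellation, a straightforward factorization yields
\begin{equation*}
f'(z)-zf''(z)=b\alpha(2-\alpha)z^{\alpha-1}+c\beta(\beta-2)z^{\beta-1}.
\end{equation*}

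Finally, I would conclude by inspection: the hypotheses $b>0$, $c>0$, $0<\alpha<2$, $\beta>2$, and $z>0$ make each of the two summands strictly positive (in particular the factors $2-\alpha$ and $\beta-2$ are positive), so the whole expression is strictly positive, which is the desired inequality. There is no real obstacle here; the only subtlety worth flagging explicitly in the write-up is the cancellation of the $z^2$ term, which is what allows the sign of $a$ to be irrelevant, and the fact that both $\alpha(2-\alpha)$ and $\beta(\beta-2)$ are strictly positive rather than merely nonnegative, so that the inequality is strict for every $z\in(0,\infty)$.
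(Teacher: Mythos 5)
Your proposal is correct and follows essentially the same route as the paper: a direct computation showing that the quadratic term cancels in $f'(z)-zf''(z)$, leaving $b\alpha(2-\alpha)z^{\alpha-1}+c\beta(\beta-2)z^{\beta-1}$, which is strictly positive under the stated hypotheses.
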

\begin{proof}
By direct computations
\[
\begin{aligned}
f^{\prime}(z)-zf^{\prime\prime}(z)&=b\alpha z^{\alpha-1}-c \beta z^{\beta-1}-b\alpha(\alpha-1) z^{\alpha-1}+c\beta(\beta-1) z^{\beta-1}\\
&= b\alpha(2-\alpha)z^{\alpha-1}+c\beta(\beta-2)z^{\beta-1},
\end{aligned}
\]
and both terms in the right-hand side of the equation above are strictly positive.
\end{proof}
As a consequence of the above elementary lemma, by calling $z_0$ the (unique) maximum of the function $f,$ we have the following.
\begin{corollary}\label{co:lemmino1}
Under the hypothesis of Lemma \ref{lemmino1}, then 
$f$ is concave in $[z_0, \infty)$ and $f^{\prime\prime}(z_0)<0.$
\end{corollary}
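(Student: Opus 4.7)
The plan is to reduce both assertions --- concavity of $f$ on $[z_0,\infty)$ and strict negativity of $f''(z_0)$ --- to the single pointwise inequality
\[
f'(z) > z\, f''(z), \qquad z\in(0,\infty),
\]
already proven in Lemma \ref{lemmino1}. Indeed, if one can show that $f'(z)\le 0$ for every $z\ge z_0$, then the right-hand side $f'(z)$ is non-positive there, which instantly forces $z f''(z)<0$, and hence $f''(z)<0$ since $z>0$. So the only missing ingredient is the sign of $f'$ past the maximum.

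To pin down the sign of $f'$, I would introduce the auxiliary ratio $h(z):=f'(z)/z$ on $(0,\infty)$ and compute
\[
h'(z)=\frac{z f''(z)-f'(z)}{z^2},
\]
which is strictly negative by Lemma \ref{lemmino1}. Hence $h$ is strictly decreasing on $(0,\infty)$. Since $z_0$ is a critical point of $f$, one has $h(z_0)=0$; strict monotonicity of $h$ then yields $h(z)<0$ for every $z>z_0$, i.e.\ $f'(z)<0$ on $(z_0,\infty)$.

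Combining the two ingredients, the inequality $z f''(z)<f'(z)$ gives $z_0 f''(z_0)<0$ at the endpoint (using $f'(z_0)=0$) and $z f''(z)<f'(z)<0$ for $z>z_0$. In both cases $f''(z)<0$, so $f''$ is strictly negative throughout $[z_0,\infty)$, which is precisely concavity on that interval and, in particular, the pointwise statement $f''(z_0)<0$. I do not expect any real obstacle: the whole content is the observation that an $f$ satisfying $f'(z)>zf''(z)$ must have $f'(z)/z$ monotone decreasing, so its unique sign change for $f'$ coincides with the unique critical point $z_0$, and negative second derivative then propagates from $z_0$ outward via the same inequality.
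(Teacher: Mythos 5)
Your proof is correct and follows essentially the route the paper intends: the corollary is stated there as an immediate consequence of Lemma \ref{lemmino1}, i.e.\ $f'(z_0)=0$ combined with $f'(z)>zf''(z)$ gives $f''(z_0)<0$, and the sign of $f'$ beyond the maximum propagates the negativity of $f''$ on $[z_0,\infty)$. Your observation that $h(z)=f'(z)/z$ is strictly decreasing is a clean way of making rigorous the one implicit step, namely that $f'<0$ on $(z_0,\infty)$.
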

\begin{lemma}\label{lemmino2}
Let $z\in (0, \infty)$ and $f=az^2+bz^{\alpha}-cz^{\beta}$ with $b>0,$ $c>0,$ $2 \leq \alpha<\beta $, and $\beta>2$. Then, by calling $z_0$ the (unique) maximum of the function $f$
\[
f^{\prime\prime}(z_0)<0.
\]
\end{lemma}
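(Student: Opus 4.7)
The plan is to exploit the first-order condition $f'(z_0)=0$ to reduce the computation of $f''(z_0)$ to a two-term expression with no explicit dependence on $a$, and then to argue geometrically that $z_0$ sits past the critical scale where this expression changes sign. Concretely, from $f'(z_0)=2az_0+b\alpha z_0^{\alpha-1}-c\beta z_0^{\beta-1}=0$ one solves for $2a$ and substitutes into $f''(z_0)=2a+b\alpha(\alpha-1)z_0^{\alpha-2}-c\beta(\beta-1)z_0^{\beta-2}$. The $2a$ contribution cancels the extra unit inside each coefficient and produces
\[
f''(z_0)=b\alpha(\alpha-2)\,z_0^{\alpha-2}-c\beta(\beta-2)\,z_0^{\beta-2}.
\]

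When $\alpha=2$ the first term vanishes and the second is strictly negative (as $c,\beta>0$ and $\beta>2$), and we are done. The substantive case is $\alpha>2$, where both coefficients $b\alpha(\alpha-2)$ and $c\beta(\beta-2)$ are strictly positive, so $f''(z_0)<0$ becomes equivalent to
\[
z_0^{\beta-\alpha}>z_\ast^{\beta-\alpha}:=\frac{b\alpha(\alpha-2)}{c\beta(\beta-2)}.
\]

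The main obstacle is therefore the positioning of $z_0$ relative to the threshold $z_\ast$. I would handle this by factoring $f'(z)=z\,h(z)$, with $h(z)=2a+b\alpha z^{\alpha-2}-c\beta z^{\beta-2}$, and analysing $h$ on its own. A direct computation gives $h'(z)=z^{\alpha-3}\bigl(b\alpha(\alpha-2)-c\beta(\beta-2)z^{\beta-\alpha}\bigr)$, so $h$ is strictly increasing on $(0,z_\ast)$, attains its maximum at $z_\ast$, and is strictly decreasing on $(z_\ast,\infty)$, with $h(z)\to-\infty$ as $z\to\infty$. Since $z_0>0$ is an interior maximum of $f$, and $f'=z\,h$ has the same sign as $h$ on $(0,\infty)$, the function $h$ must change sign from positive to negative at $z_0$; such a sign change can only occur on the strictly decreasing branch of $h$, which forces $z_0>z_\ast$ and hence $f''(z_0)<0$.

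A subtle point worth flagging is that the uniqueness (and existence) of an interior maximum is needed to exclude the degenerate configuration $h(z_\ast)\le 0$, in which $h\le 0$ everywhere and no sign-changing zero exists. Under that configuration $f$ would be non-increasing on $(0,\infty)$ with supremum $f(0^{+})=0$ not attained at any interior point, contradicting the standing hypothesis that a unique maximum $z_0\in(0,\infty)$ exists. With this case ruled out, the geometric argument above closes and yields the conclusion.
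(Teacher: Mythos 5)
Your proof is correct, and it proceeds by a genuinely different route than the paper. The paper argues by contradiction: assuming $f''(z_0)=0$, it notes that at a maximum this forces (via a Taylor expansion to third order) $f'''(z_0)=0$ as well, and then the two identities $0=f'(z_0)-z_0f''(z_0)$ and $0=z_0^2f'''(z_0)$ produce two incompatible values for $z_0^{\beta-\alpha}$, namely $\frac{b\alpha(\alpha-2)}{c\beta(\beta-2)}$ and $\frac{b\alpha(\alpha-1)(\alpha-2)}{c\beta(\beta-1)(\beta-2)}$. You instead use the first-order condition to eliminate $a$ and obtain the exact value $f''(z_0)=b\alpha(\alpha-2)z_0^{\alpha-2}-c\beta(\beta-2)z_0^{\beta-2}$ (the same quantity the paper reaches through $f'(z_0)-z_0f''(z_0)$), and then prove the strict inequality directly by locating $z_0$ past the unique critical point $z_\ast$ of $h(z)=f'(z)/z$: since $h$ increases on $(0,z_\ast)$ and decreases thereafter, a down-crossing zero of $f'$ (which is what an interior maximum requires) can only sit on the decreasing branch. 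Your route is slightly longer but more quantitative -- it shows $f''(z_0)<0$ with an explicit lower bound on $z_0$ rather than merely excluding equality -- and it makes explicit where the standing hypothesis that an interior maximum exists is used (to rule out the degenerate case $h(z_\ast)\le 0$), a point the paper's contradiction argument bypasses because it only ever needs $f'(z_0)=0$ and $f''(z_0)\le 0$. Both arguments correctly dispose of the boundary case $\alpha=2$ immediately.
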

\begin{proof}
The case $\alpha=2$ it is straightforward. Let us take $\alpha>2$ and assume that $f^{\prime\prime}(z_0)=0$. By the Taylor expansion formula,  we get
\[
f(z)=f(z_0)+\frac 16 f^{\prime\prime\prime}(z_0)(z-z_0)^3+o(|z-z_0|^3).
\]
By the fact that $z_0$ is a maximum point, we get that $f^{\prime\prime\prime}(z_0)=0.$
By easy computations we have
\[
0=f^{\prime}(z_0)-z_0f^{\prime\prime}(z_0)=b\alpha(2-\alpha)z_0^{\alpha-1}+c\beta(\beta-2)z_0^{\beta-1},
\]
and
\[
z_0^2f^{\prime\prime\prime}(z_0)=b\alpha(\alpha-1)(\alpha-2)z_0^{\alpha-1}-c\beta(\beta-1)(\beta-2)z_0^{\beta-1}.
\]
From the first identity above we get
\[
z_0^{\beta-\alpha}=\frac{b\alpha(\alpha-2)}{c\beta(\beta-2)},
\]
while from the second
\[z_0^{\beta-\alpha}=\frac{b\alpha(\alpha-1)(\alpha-2)}{c\beta(\beta-1)(\beta-2)},
\]
and they imply a contradiction, as $\alpha\neq\beta.$
\end{proof}

\begin{proposition}\label{lem:growth}
Let $u$ belong to the manifold $S(\rho^2).$ Then the following properties are  satisfied: 
\begin{itemize}
\item[$(i)$] $\frac{d}{d \mu} E(u^{\mu})=\frac{1}{\mu}G(u^{\mu})$, $\forall \mu>0$;
\item[$(ii)$] there exists a unique $\tilde\mu=\tilde\mu(u)>0$, such that $u^{\tilde\mu} \in V(\rho^2)$;
\item[$(iii)$] the map $\mu \mapsto E(u^{\mu})$ is concave on $[\tilde\mu, \infty),$ and $\frac{d^2}{d \mu^2} E(u^{\tilde\mu})<0$;
\item[$(iv)$] $E(u^{\mu})<E(u^{\tilde\mu})$, for any $\mu>0$ and $\mu \neq \tilde\mu$;
\item[$(v)$] $\tilde\mu(u)<1$ if and only if $G(u)<0$;
\item[$(vi)$] $\tilde\mu(u)=1$ if and only if $G(u)=0$;
\item[$(vii)$] the functional $G$ satisfies 
\begin{equation*}
G(u^\mu)
\begin{cases}
 >0,\quad \forall\, \mu \in (0,\tilde\mu(u))\\
 <0, \quad \forall\, \mu\in (\tilde\mu(u),+\infty)
\end{cases}.
\end{equation*}
\end{itemize}
\end{proposition}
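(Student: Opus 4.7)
The starting point is to determine the $\mu$-dependence of the three basic functionals under the $L^2$-preserving scaling \eqref{eq:scaling}. A change of variables $y=\mu x$ yields
\[
K(u^\mu)=\mu^{2} K(u),\quad N_q(u^\mu)=\mu^{\alpha} N_q(u),\quad N_p(u^\mu)=\mu^{\beta} N_p(u),
\]
with $\alpha:=\tfrac{3(q-1)}{2}$ and $\beta:=\tfrac{3(p-1)}{2}$. The hypotheses $p\in(\tfrac73,5)$ and $1<q<p$ guarantee $\beta>2$ and $0<\alpha<\beta$. Setting $a:=\tfrac12 K(u)$, $b:=\tfrac{N_q(u)}{q+1}$ and $c:=\tfrac{N_p(u)}{p+1}$, all strictly positive for $u\in S(\rho^2)$, we obtain
\[
f(\mu):=E(u^\mu)=a\mu^{2}+b\mu^{\alpha}-c\mu^{\beta},\qquad \mu>0,
\]
and a direct differentiation gives $\mu f'(\mu)=2a\mu^{2}+\alpha b\mu^{\alpha}-\beta c\mu^{\beta}=G(u^\mu)$, which is exactly item $(i)$.

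The remaining items reduce to a one-variable analysis of $f$. Since $\alpha,\beta>0$ and $\beta>2$, one has $f(0^+)=0$ and $f(\mu)\to-\infty$ as $\mu\to\infty$, and an expansion of $f'$ at $0^+$ shows that $f$ is initially strictly increasing, so $f$ has at least one positive critical point, which is a local maximum. To obtain uniqueness of such a critical point $\tilde\mu$ I split into two subcases. When $0<\alpha<2$, Lemma \ref{lemmino1} gives $f'(z)>z f''(z)$ throughout $(0,\infty)$, so at any critical point $f''<0$; hence every critical point is a strict local maximum, which forces exactly one. When $\alpha\geq 2$, I would factor $f'(\mu)=\mu\,g(\mu)$ with $g(\mu):=2a+\alpha b\mu^{\alpha-2}-\beta c\mu^{\beta-2}$: one has $g(0^+)>0$, $g(\infty)=-\infty$, and $g'$ has at most one positive zero, so $g$ is either strictly decreasing throughout or first strictly increasing then strictly decreasing; in either event it has exactly one positive zero, which is $\tilde\mu$. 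With uniqueness in hand, items $(ii)$, $(iv)$ and $(vii)$ are direct consequences of $f'>0$ on $(0,\tilde\mu)$ and $f'<0$ on $(\tilde\mu,\infty)$ combined with the identity $G(u^\mu)=\mu f'(\mu)$; items $(v)$ and $(vi)$ then follow by evaluating this identity at $\mu=1$, since $G(u)=f'(1)$ and the sign of $f'(1)$ is determined by the relative position of $1$ and $\tilde\mu$.

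Item $(iii)$ has two components. The pointwise inequality $f''(\tilde\mu)<0$ follows from Lemma \ref{lemmino1} applied at the critical point $\tilde\mu$ when $0<\alpha<2$, and from Lemma \ref{lemmino2} when $\alpha\geq 2$. Concavity of $f$ on the whole half-line $[\tilde\mu,\infty)$ is Corollary \ref{co:lemmino1} in the subcase $0<\alpha<2$. In the range $\alpha\geq 2$ I would argue directly on $f''$: since $f''(0^+)>0$, $f''(\infty)=-\infty$, and
\[
f'''(\mu)=\alpha(\alpha-1)(\alpha-2)\,b\,\mu^{\alpha-3}-\beta(\beta-1)(\beta-2)\,c\,\mu^{\beta-3}
\]
changes sign at most once, from positive to negative (it is identically negative when $\alpha=2$), the function $f''$ is first increasing then decreasing, has a unique zero $\mu_\star$, and is nonpositive on $[\mu_\star,\infty)$; as $f''(\tilde\mu)<0$ we must have $\tilde\mu>\mu_\star$, which gives concavity on $[\tilde\mu,\infty)$. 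The main obstacle I anticipate is not any single hard estimate but the coexistence of the three subregimes $\alpha<2$, $\alpha=2$, $\alpha>2$, which obstructs a uniform treatment both of the uniqueness of $\tilde\mu$ and of the promotion of the pointwise inequality $f''(\tilde\mu)<0$ to full concavity on $[\tilde\mu,\infty)$; Lemmas \ref{lemmino1}--\ref{lemmino2} and Corollary \ref{co:lemmino1} are tailored precisely to streamline these case distinctions.
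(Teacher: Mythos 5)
Your proposal is correct and follows essentially the same strategy as the paper: everything is reduced to the one-variable function $f(\mu)=a\mu^{2}+b\mu^{\alpha}-c\mu^{\beta}$, uniqueness of the critical point $\tilde\mu$ is obtained by an elementary monotonicity analysis of the derivative (the paper factors $G(u^\mu)$ as $\mu^{2}h(\mu)$ or $\mu^{\frac32(q-1)}h(\mu)$ instead of your $f'(\mu)=\mu\,g(\mu)$, an immaterial difference), and item $(iii)$ rests on Lemma \ref{lemmino1}, Lemma \ref{lemmino2} and Corollary \ref{co:lemmino1} exactly as in the text. The only substantive deviation is that in the regime $\alpha\geq 2$ (i.e.\ $q\geq\frac73$) you additionally justify concavity on $[\tilde\mu,\infty)$ through the sign analysis of $f'''$, whereas the paper merely invokes Lemma \ref{lemmino2}, which by itself only gives $f''(\tilde\mu)<0$; your extra step is correct and actually renders that part of the argument more complete.
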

\begin{proof}
We prove the claims of the Proposition, by starting with the first point $(i)$. 

\noindent Let us introduce the rescaled quantities (under the rescaling \eqref{eq:scaling}) $E(u^\mu)$ and $G(u^\mu):$
\begin{equation}\label{eq:resc:E}
E(u^\mu)=\frac12\mu^2 K(u)+\frac{1}{q+1}\mu^{\frac 32(q-1)}N_q(u)-\frac{1}{p+1}\mu^{\frac 32(p-1)}N_p(u)
\end{equation}
and 
\begin{equation}\label{eq:resc:G}
G(u^\mu)=\mu^2 K(u)+\frac{3}{2}\left(\frac{q-1}{q+1}\right) \mu^{\frac 32(q-1)}N_q(f)-\frac{3}{2}\left(\frac{p-1}{p+1}\right)\mu^{\frac 32(p-1)}N_p(f).
\end{equation}
Direct computations show that
\begin{equation}\label{eq:der:E}
\frac{d}{d\mu}E(u^\mu)=\mu K(u)+\frac{3}{2}\left(\frac{q-1}{q+1}\right)\mu^{\frac 32(q-1)-1}N_q(u)-\frac{3}{2}\left(\frac{p-1}{p+1}\right)\mu^{\frac 32(p-1)-1}N_p(u)
\end{equation}
and combining the  two identities \eqref{eq:resc:G} and \eqref{eq:der:E} we see that
\begin{equation}\label{eq:ide}
\frac{d}{d\mu}E(u^\mu)=\frac1\mu G(u^\mu)
\end{equation}
hence $(i)$ is proved.\\

Let us prove $(ii)$. We recall that $p\in\left(\frac73,5\right).$ We distinguish two cases. 

\noindent\emph{Case I: $\frac73\leq q<p$}. We are therefore considering  the defocusing  nonlinearity  mass-critical or mass-supercritical. We rewrite \eqref{eq:resc:G} as
\[
G(u^\mu)=\mu^{2}\left( K(u)+\frac{3}{2}\left(\frac{q-1}{q+1}\right)\mu^{\frac 32(q-1)-2}N_q(u)-\frac{3}{2}\left(\frac{p-1}{p+1}\right)\mu^{\frac 32(p-1)-2}N_p(u) \right)=\mu^{2} h(\mu),
\]
where
\[
h(\mu)=K(u)+\frac{3}{2}\left(\frac{q-1}{q+1}\right)\mu^{\frac 32(q-1)-2}N_q(u)-\frac{3}{2}\left(\frac{p-1}{p+1}\right)\mu^{\frac 32(p-1)-2}N_p(u).
\]
The fact that there exists only one $\tilde\mu$ such that $h(\tilde\mu)=0,$ which in turn implies  $G(u^{\tilde \mu})=0,$ it is straightforward when $\frac 32(q-1)-2=0$, i.e. when the defocusing term is mass- critical. In all other cases, we observe that $h^\prime(\mu)$ admits only one (real) zero, say $\mu_0>0$, and $h(\mu)$ is increasing
on $[0, \mu_0],$ and decreasing on $[\mu_0,+\infty)$.
Indeed we have
\[
h^\prime(\mu)=c_1\mu^{\frac 32(q-1)-3}N_q(u)-c_2\mu^{\frac 32(p-1)-3}N_p(u) 
\]
for some $c_1,c_2>0,$ and the latter function  admits only one zero. This property,  combined with the fact that $h(\mu)\to-\infty$ as $\mu\to+\infty$, implies that there exists only one $\tilde\mu$ such that $h(\tilde\mu)=0,$ hence $G(u^{\tilde \mu})=0$. We recall
that the scaling \eqref{eq:scaling} preserves the $L^2$-norm, then it is straightforward that $u^\mu \in S(\rho^2)$.\\

\noindent \emph{Case II: $1<q<\frac73<p$}.  Here we are  dealing with the defocusing term in the mass-subcritical range. In this case it is convenient to rewrite \eqref{eq:resc:G} as
\[
G(u^\mu)=\mu^{\frac 32(q-1)}\left( \mu^{\frac{7-3q}{2}}K(u)+\frac{3}{2}\left(\frac{p-1}{p+1}\right)N_q(u)-\frac{3}{2}\left(\frac{p-1}{p+1}\right)\mu^{\frac 32(p-q)}N_p(u) \right)=\mu^{\frac 32(q-1)}h(\mu)
\]
where 
\[
h(\mu)=\mu^{\frac{7-3q}{2}}K(u)+\frac{3}{2}\left(\frac{p-1}{p+1}\right)N_q(u)-\frac{3}{2}\left(\frac{p-1}{p+1}\right)\mu^{\frac 32(p-q)}N_p(u).
\]
Arguing as before, we have that $h^\prime(\mu)$ admits only one (real) zero, and hence that there exists only one $\tilde\mu$ such that $h(\tilde\mu)=0,$ thus $G(u^{\tilde \mu})=0$. The proof is done, by recalling that the scaling is $L^2$-preserving.  The proof of $(ii)$ is complete.\\

Point $(iii)$ is proved by means of Lemma \ref{lemmino1} and Lemma \ref{lemmino2}. \\
\noindent We rewrite the rescaled energy \eqref{eq:resc:E} as
\begin{equation*}\label{eq:resc:E2}
\begin{aligned}
E(u^\mu)&=\frac12\mu^2 K(u)+\frac{1}{q+1}\mu^{\frac 32(q-1)}N_q(u)-\frac{1}{p+1}\mu^{\frac 32(p-1)}N_p(u)\\
&=a\mu^2+b\mu^{\frac 32(q-1)}-c\mu^{\frac 32(p-1)}
\end{aligned}
\end{equation*}
We distinguish two cases.\\

\noindent\emph{Case I: $q<\frac73$.} By observing that $\alpha=\frac 32(q-1)<2$ and $\beta=\frac 32(p-1)>2,$ we apply Corollary  \ref{co:lemmino1} and we conclude.\\
\noindent\emph{Case II: $q\geq\frac73$.}  In this case, by observing that $\alpha=\frac 32(q-1)\geq2$ and $\beta=\frac 32(p-1)>2,$ we use Lemma \ref{lemmino2} and we conclude.
\\

We show  point $(iv)$.

\noindent We  observe that the rescaled energy $E(u^\mu),$ defined in \eqref{eq:resc:E}, is positive and increasing in a neighbourhood of the origin, and its derivative $\frac{d}{d\mu}E(u^\mu)$ admits only one zero, which is a point of maximum for $E(u^\mu),$
and it is exactly the point $\tilde\mu,$ where for the latter fact we employ the point $(i)$ and \eqref{eq:ide}. This in turn gives $(iv)$.\\

Let us consider point $(v)$. \\
\noindent Assume $G(u)<0$ and suppose that $\tilde\mu>1.$ As $G(u^\mu)>0$ for any $ \mu< \tilde\mu,$ in particular we have $G(u)>0,$ which is a contradiction. Viceversa, assume that $\tilde\mu<1$; then for any $\mu>\tilde\mu$ the quantity $G(u^\mu)<0$, and in particular $G(u)<0$. \\

We can conclude the proof of the Proposition by observing that  point $(vi)$ is straightforward by definition of $\tilde\mu$, in conjunction with the previous discussions, while point $(vii)$ is clear from the definition of $\tilde\mu$ and the identity \eqref{eq:ide}, observing that the right-hand side of \eqref{eq:ide} is positive.
\end{proof}

As a simple consequence  of the previous Proposition and the definition of the rescaling \eqref{eq:scaling}, we have the following.
\begin{corollary}\label{lem:base}
Let $u \in S(\rho^2).$ Then we have:
\begin{itemize}
\item[$(i)$] $K(u^\mu)\to0$ and $N_q(u^\mu)\to0$  as $\mu\to0$;
\item[$(ii)$] $K(u^{\mu}) \to \infty,$  $N_q(u^{\mu}) \to \infty,$ and $E(u^{\mu})  \to -\infty$, as $\mu \to \infty$;
\item[$(iii)$] if $E(u)<0,$ then $G(u)<0.$
\end{itemize}
\end{corollary}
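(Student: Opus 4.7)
The plan is to exploit the explicit powers of $\mu$ appearing in the rescaled kinetic, potential, and energy functionals computed already in the proof of Proposition \ref{lem:growth}, together with the ``mountain pass'' shape of $\mu\mapsto E(u^{\mu})$ that was established there.

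For point $(i)$, I would simply use that the $L^{2}$-preserving scaling \eqref{eq:scaling} gives $K(u^{\mu})=\mu^{2}K(u)$ and $N_{q}(u^{\mu})=\mu^{\frac{3}{2}(q-1)}N_{q}(u)$, which are both explicit formulas used in \eqref{eq:resc:E}. Since $q>1$, both exponents are strictly positive, so both quantities tend to $0$ as $\mu\to 0$. For point $(ii)$, the same formulas immediately give $K(u^{\mu}), N_{q}(u^{\mu}) \to \infty$ as $\mu\to\infty$. To handle the divergence of $E(u^{\mu})$ to $-\infty$, I would compare the three exponents in
\[
E(u^{\mu})=\tfrac{1}{2}\mu^{2}K(u)+\tfrac{1}{q+1}\mu^{\frac{3}{2}(q-1)}N_{q}(u)-\tfrac{1}{p+1}\mu^{\frac{3}{2}(p-1)}N_{p}(u),
\]
observing that, since $p\in(\tfrac{7}{3},5)$ and $p>q$, one has $\frac{3}{2}(p-1)>2$ and $\frac{3}{2}(p-1)>\frac{3}{2}(q-1)$, so the last (negative) term dominates as $\mu\to\infty$.

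The only part requiring a short argument is point $(iii)$. The plan is to combine points $(iv)$ and $(v)$ of Proposition \ref{lem:growth}. Recall from the proof of $(iv)$ that $\mu\mapsto E(u^{\mu})$ is strictly increasing on $(0,\tilde\mu)$ and strictly decreasing on $(\tilde\mu,\infty)$, with $E(u^{\mu})\to 0^{+}$ as $\mu\to 0^{+}$ (which follows from the explicit rescaling together with positivity of the leading terms for small $\mu$). Consequently, $E(u^{\mu})>0$ on the entire interval $(0,\tilde\mu]$. Assuming $E(u)=E(u^{1})<0$ therefore forces $1>\tilde\mu(u)$, since otherwise $1\in(0,\tilde\mu]$ and $E(u^{1})>0$, contradiction. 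Invoking now point $(v)$ of Proposition \ref{lem:growth}, the inequality $\tilde\mu(u)<1$ is equivalent to $G(u)<0$, which concludes the proof.

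No serious obstacle is expected: all three points reduce to the explicit scaling identities and to facts already proved in Proposition \ref{lem:growth}. The only subtle observation is that $E(u^{\mu})$ stays strictly positive on $(0,\tilde\mu]$, which is immediate from its strict monotonicity there combined with the limit $E(u^{\mu})\to 0^{+}$ at the origin; this is what converts the sign assumption $E(u)<0$ into the sign information on $\tilde\mu$ needed to apply point $(v)$.
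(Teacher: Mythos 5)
Your proof is correct, and points $(i)$--$(ii)$ are handled exactly as in the paper (explicit scaling exponents, with the largest exponent $\tfrac32(p-1)>2$ carrying the negative coefficient). For point $(iii)$, however, you take a genuinely different route. The paper deduces $(iii)$ in one line from the identity \eqref{eq:egkn}: since $\tfrac73<p<5$ and $q<p$, the right-hand side of \eqref{eq:egkn} is a nonnegative combination of $K(u)$ and $N_q(u)$, so $\tfrac{2}{3(p-1)}G(u)\le E(u)$, and $E(u)<0$ immediately forces $G(u)<0$ -- a purely algebraic argument that does not invoke Proposition \ref{lem:growth} at all. You instead argue through the fibering map $\mu\mapsto E(u^\mu)$: it vanishes as $\mu\to0^+$ and is strictly increasing on $(0,\tilde\mu)$ because its derivative equals $\mu^{-1}G(u^\mu)>0$ there (points $(i)$ and $(vii)$ of Proposition \ref{lem:growth}), hence $E(u^\mu)>0$ on $(0,\tilde\mu]$; thus $E(u)<0$ forces $\tilde\mu(u)<1$, and point $(v)$ (or directly $(vii)$ at $\mu=1$) yields $G(u)<0$. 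Your argument is sound and has the merit of making the geometric picture transparent (negative energy places $u$ past the maximum of the fiber, where $G$ is negative), at the cost of relying on the full structure of Proposition \ref{lem:growth}; the paper's identity-based proof is shorter, self-contained, and in fact gives the quantitative bound $G(u)\le\tfrac{3(p-1)}{2}E(u)$, which is the form used later in the blow-up analysis.
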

\begin{proof}
The first  point is trivial. The second point is a consequence of Proposition \ref{lem:growth}. The second one straightforwardly comes from the identity \eqref{eq:egkn}.
\end{proof}
Now, let us consider the  space $X=\dot H^{1}(\mathbb R^3)\cap L^{q+1}(\mathbb R^3),$ which is a reflexive Banach space
equipped with the norm 
\begin{equation}\label{Xagain}
\|u\|_X=K(u)^{\frac 12}+N_q(u)^{\frac{1}{q+1}},
\end{equation}
and we prove that the energy functional has a mountain pass geometry on $X.$
\begin{proposition}\label{prop:mp0}
$E(u)$ has a mountain pass geometry on $X$.
\end{proposition}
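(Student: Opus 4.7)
The plan is to verify directly the three standard ingredients of a mountain pass geometry for $E$ on the Banach space $X$, noting immediately that $E(0)=0$ from the definition \eqref{def:en}.

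\emph{Step 1 (sphere estimate).} I would exhibit $r,\alpha>0$ with $E(u)\geq\alpha$ whenever $\|u\|_X=r$. The key tool is the Gagliardo--Nirenberg interpolation
\[
\|u\|_{L^{p+1}}\leq C\,\|u\|_{L^{q+1}}^{\theta}\,\|\nabla u\|_{L^2}^{1-\theta},\qquad \tfrac{1}{p+1}=\tfrac{\theta}{q+1}+\tfrac{1-\theta}{6},
\]
available because $1<q<p<5$ forces $q+1<p+1<6$, so $\theta\in(0,1)$; this is H\"older interpolation between $L^{q+1}$ and $L^6$ combined with the Sobolev embedding $\dot H^1(\mathbb R^3)\hookrightarrow L^6(\mathbb R^3)$. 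Raising to the power $p+1$ and bounding each factor by $\|u\|_X$ yields $N_p(u)\leq C\|u\|_X^{p+1}$. For the positive part of $E$, setting $a=K(u)^{1/2}$ and $b=N_q(u)^{1/(q+1)}$, the constraint $a+b=r$ forces $\max\{a,b\}\geq r/2$, so for $r\in(0,1)$ (recall $q+1>2$)
\[
\tfrac12 K(u)+\tfrac{1}{q+1}N_q(u)\geq c_0\,r^{q+1}.
\]
Combining these bounds, $E(u)\geq r^{q+1}(c_0-Cr^{p-q})$, which is strictly positive for $r$ small enough because $p>q$; any such $r$ defines the desired $\alpha>0$.

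\emph{Step 2 (negative--energy point).} I would produce $v\in X$ with $\|v\|_X>r$ and $E(v)<0$. Fix any nontrivial $u_0\in C_c^\infty(\mathbb R^3)\subset X$ and consider the $L^2$-preserving rescaling $u_0^\mu$ from \eqref{eq:scaling}. By Corollary \ref{lem:base}\,$(ii)$, $E(u_0^\mu)\to-\infty$ as $\mu\to\infty$; moreover, a direct computation of the rescaled norms gives
\[
\|u_0^\mu\|_X=\mu\,K(u_0)^{1/2}+\mu^{\frac{3(q-1)}{2(q+1)}}N_q(u_0)^{\frac{1}{q+1}}\underset{\mu\to\infty}{\longrightarrow}\infty,
\]
so taking $\mu$ large enough produces the sought $v=u_0^\mu$.

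The one genuinely delicate step is the interpolation inequality: because $X$ does not control the $L^2$-norm, the classical Gagliardo--Nirenberg interpolation through $L^2$ is unavailable, and one must interpolate between $L^{q+1}$ and the Sobolev endpoint $L^6$. That this is possible is exactly the content of the standing assumptions $q<p<5$, which guarantee simultaneously that $p+1\in(q+1,6)$ and that $\theta\in(0,1)$; once this is in place, the two geometric conditions follow from elementary, scale-homogeneous estimates.
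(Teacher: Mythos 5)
Your proof is correct and follows essentially the same route as the paper: a Gagliardo--Nirenberg/Sobolev bound $N_p(u)\leq C\|u\|_X^{p+1}$, a lower bound for the positive part of $E$ on small $X$-spheres governed by the worse exponent $q+1$, and a negative-energy endpoint produced by the $L^2$-preserving scaling \eqref{eq:scaling} via Corollary \ref{lem:base}. The only cosmetic difference is that you use the standard base point $0$, whereas the paper's formulation \eqref{Gamma3} allows paths starting anywhere in the small ball $A_k$; to match it one just adds the immediate observation that $\sup_{A_k}E\to 0$ as $k\to 0$, which follows from the same upper bound on $E$ in terms of $\|u\|_X$.
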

\begin{proof}
Let us first define the space of paths 
\begin{equation}\label{Gamma3}
\Gamma_X =\{g \in C([0,1];X)  \hbox{ such that } g(0)\in A_k,E(g(1))<0\}.
\end{equation}
where 
\[A_{k}= \{u \in X \hbox{ such that }  \left \|  u \right \|_{X}^2\leq k\}.
\]
In order to show the mountain pass geometry we aim to prove that
\begin{equation}\label{gamma2}
I := \inf_{g \in \Gamma_X} \max_{t\in [0,1]}E(g(t)) > \max \left\{\max_{g \in \Gamma_X}E(g(0)), \max_{g \in \Gamma_X}E(g(1))\right\}.
\end{equation}
Let us fix $\|u\|_{X}^2=b.$ From \eqref{Xagain}, by the Young inequality, we have $b\leq2\left(K(u)+N_q(u)^{\frac{2}{q+1}}\right)$. This fact implies that either 
\[
K(u)\geq \frac{b}{4}=\frac{1}{4}\|u\|_X^2
\] or 
\[
N_{q}(u)\geq \left(\frac{b}{4}\right)^{\frac{q+1}{2}}=\left(\frac{1}{4}\right)^{\frac{q+1}{2}}\|u\|_X^{q+1},
\]
which implies that for $u \in X$
\begin{equation}\label{eq:figo}
\frac 12 K(u)+\frac{1}{q+1}N_q(u)\geq \frac{1}{q+1}\left(K(u)+N_q(u)\right)\geq \frac{2}{q+1}\min \left\{\frac{1}{4}\|u\|_X^2,  \left(\frac{1}{4}\right)^{\frac{q+1}{2}}\|u\|_X^{q+1}\right\}.
\end{equation}
The Gagliardo-Nirenberg inequality yields  $N_{p}(u)\leq C\|u\|_X^{p+1},$ and hence
\begin{equation}\label{eq:vaazero}
\begin{aligned}
E(u)&\leq \frac{1}{2}K(u)+\frac{1}{q+1}N_q(u)+\frac{1}{p+1} N_p(u)\leq   \frac{1}{2}\|u\|_{X}^2+ \frac{1}{q+1}\|u\|_{X}^{q+1}+C \|u\|_{X}^{p+1}\\
E(u)&= \frac{1}{2}K(u)+\frac{1}{q+1}N_q(u)-\frac{1}{p+1} N_p(u)\geq   \frac{1}{2}\|u\|_{X}^2+ \frac{1}{q+1}\|u\|_{X}^{q+1}-C \|u\|_{X}^{p+1}
\end{aligned}
\end{equation}
Thus, for a fixed $b$, by defining the set 
\[
C_{b}= \left\{u \in X \hbox{ such that } \left \|  u \right \|_{X}^2=b\right\},
\]
we derive, by \eqref{eq:figo} and \eqref{eq:vaazero},  that for any $u \in C_{b}$
\[
E(u)\geq  \frac{2}{q+1}\min \left\{\frac{b}{4},  \left(\frac{b}{4}\right)^{\frac{q+1}{2}}\right\}-C b^{\frac{p+1}{2}},
\]
and, by the definition of $G$, see \eqref{eq:G},
\[
G(u)\geq 3\left(\frac{q-1}{q+1}\right)\min \left\{\frac{b}{4},  \left(\frac{b}{4}\right)^{\frac{q+1}{2}}\right\}-C b^{\frac{p+1}{2}}.
\]
As a byproduct, there exists $b_0\ll1$ such that, thanks to \eqref{eq:vaazero},  for $k\ll b_0$
\[
0< \sup_{u \in A_{k}} E(u)< \inf_{u \in C_{b_0}} E(u),
\] 
and  
$\inf_{u \in C_{b}} G(u)>0$ when $0<b<b_0.$
The proof of Proposition \ref{prop:mp0} follows directly from these two estimates. From the scaling introduced in \eqref{eq:scaling},
the class of paths defined in \eqref{Gamma3} fulfilling \eqref{gamma2} is not empty. Indeed, given $g\in \Gamma_X$ there exists $\bar t\in[0,1]$ such that $g(\bar t)\in C_{b_0}.$
\end{proof}
We now  introduce  the space  
\begin{equation}\label{eq:space-W}
W=\left\{u\in X   \hbox{ such that } G(u)=0\right\}.
\end{equation}
We have the following characterization of the mountain pass energy $I$ in $X.$
\begin{proposition} \label{prop:infe0} The minimization problem satisfies 
$I= \displaystyle \inf_{W} E$.
\end{proposition}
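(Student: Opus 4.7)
The plan is to establish the two inequalities $I \le \inf_W E$ and $I \ge \inf_W E$, exploiting the scaling analysis of Proposition \ref{lem:growth} and the quantitative estimates developed inside the proof of Proposition \ref{prop:mp0}. Throughout I tacitly interpret $\inf_W E$ as the infimum over \emph{nontrivial} $u \in W$, since the trivial zero of $G$ is irrelevant for the positive mountain pass level $I>0$.

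For the upper bound, I would fix any $u\in W$ with $u\not\equiv 0$ and construct an admissible path in $\Gamma_X$ passing through $u$ at its summit. By Proposition \ref{lem:growth}(vi), $\tilde\mu(u)=1$, and (iv) gives $E(u^\mu)\le E(u)$ with equality only at $\mu=1$. Corollary \ref{lem:base} furnishes $\mu_-\ll 1$ with $\|u^{\mu_-}\|_X^2\le k$ and $\mu_+\gg 1$ with $E(u^{\mu_+})<0$; after an affine reparametrization to $[0,1]$, the curve $t\mapsto u^{(1-t)\mu_-+t\mu_+}$ lies in $\Gamma_X$ and its maximum energy is exactly $E(u)$. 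Hence $I\le E(u)$, and optimizing over $u$ gives $I\le\inf_W E$.

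For the reverse inequality, the strategy is to show that every path $g\in\Gamma_X$ must visit $W\setminus\{0\}$, yielding $\max_{t\in[0,1]} E(g(t))\ge E(g(\bar t))\ge \inf_W E$ for some $\bar t$. Two estimates from the proof of Proposition \ref{prop:mp0} drive the argument: (a) $G(u)>0$ whenever $0<\|u\|_X^2\le b_0$, for $b_0$ sufficiently small; and (b) the lower estimate in \eqref{eq:vaazero} forces $\|g(1)\|_X^2>b_0$ whenever $E(g(1))<0$, while identity \eqref{eq:egkn} together with $p>7/3$ (which makes the right-hand side nonnegative) forces $G(g(1))<0$. Since $g(0)\in A_k$ with $k<b_0$, continuity of $g$ in $X$ ensures that the sphere $C_{b_0}$ is crossed; let $t_0$ be the first crossing. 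Then $G(g(t_0))>0$ and $G(g(1))<0$, so by continuity the number $\bar t := \max\{t\in[t_0,1] : G(g(t))=0\}$ is well defined and satisfies $G(g(\bar t))=0$.

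The main obstacle I expect is ruling out the degenerate scenario $g(\bar t)=0$, so that $g(\bar t)$ is a genuine nontrivial element of $W$. If it happened, continuity would force $\|g(t)\|_X$ to be arbitrarily small for $t$ slightly greater than $\bar t$; by maximality of $\bar t$ we would have $G(g(t))<0$ there, yet for any element of $X$ with small positive $X$-norm estimate (a) forces $G>0$, a contradiction (and if $g(t)=0$ on a whole right neighbourhood of $\bar t$ the maximality is again violated). This rules out $g(\bar t)=0$ and closes the circle of inequalities, establishing $I=\inf_W E$.
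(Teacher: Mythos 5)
Your proof is correct and follows essentially the same route as the paper: the upper bound via the $L^2$-scaling path $\mu\mapsto u^\mu$ through a given $u\in W$ (using Proposition \ref{lem:growth} and Corollary \ref{lem:base}), and the lower bound by showing every path in $\Gamma_X$ must cross $W$. The only difference is that you spell out in detail the intermediate-value argument (sign of $G$ near the small sphere $C_{b_0}$ versus at $g(1)$, and the exclusion of the trivial zero) that the paper compresses into the phrase ``any path in $\Gamma_X$ crosses $W$, by continuity''.
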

\begin{proof}
Let $v\in W$. Since  $G(v) =0,$  by considering the scaling 
$v^{\mu}(x)=\mu^{\frac 32}v(\mu x)$,  we deduce from Corollary \ref{lem:base} that there exists  $\mu_1\ll1$ and  $\mu_2\gg1$ such that $u^{\mu_1}\in A_{k}$ and $E(u^{\mu_2})<0$. Thus, if we define
\[
g(\lambda)=v^{(1-\lambda)\mu_1+\lambda \mu_2}, \quad\hbox{ for }\quad \lambda\in[0,1],
\]
we obtain a path in $\Gamma_X$. By the definition of $I$ 
\[
I\leq \max_{\lambda\in [0,1]}E(g(\lambda))=E(v),
\]
which implies that $I\leq \displaystyle \inf_{W}E.$
On the other hand, any path $g(t)$ in $\Gamma_X$ crosses $W$, by continuity. This shows that
\[
\max_{t \in [0,1]} E(g(t))\geq \inf_{u \in W} E(u),
\]
and hence that $I\geq \inf_{W}E$
\end{proof}

\begin{proposition}\label{prop:mp}
$E(u)$ has a mountain pass geometry on $S(\rho^2)$.
\end{proposition}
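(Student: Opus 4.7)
The plan is to transfer the mountain pass geometry from $X$ (established in Proposition \ref{prop:mp0}) to the mass constraint $S(\rho^2)$, exploiting the fact that the scaling $u^\mu(x)=\mu^{3/2}u(\mu x)$ in \eqref{eq:scaling} preserves the $L^2$-norm, so every scaled function stays in $S(\rho^2)$. The central ingredients will be: (a) the lower/upper bounds of $E$ and $G$ on $X$-balls obtained in the proof of Proposition \ref{prop:mp0}; (b) Corollary \ref{lem:base}, which tells us that $E(u^\mu)\to-\infty$ and $\|u^\mu\|_X\to0$ as $\mu\to\infty$, $\mu\to 0$ respectively; and (c) the identification $I=\inf_W E$ from Proposition \ref{prop:infe0}.

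First I would fix some $u_0\in S(\rho^2)$ (which is non-empty) and verify that $\Gamma(\rho^2)$ is non-empty by explicitly exhibiting a path. By Corollary \ref{lem:base} there exist $0<\mu_1\ll 1\ll \mu_2$ with $\|u_0^{\mu_1}\|_X^2\leq k$ and $E(u_0^{\mu_2})<0$; since $\mu\mapsto u_0^\mu$ is continuous from $(0,\infty)$ into $H^1$ with values in $S(\rho^2)$, the reparametrized path $\lambda\mapsto u_0^{(1-\lambda)\mu_1+\lambda\mu_2}$ lies in $\Gamma(\rho^2)$.

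Next I would bound the energy at the endpoints of any admissible path. From the estimate \eqref{eq:vaazero} in the proof of Proposition \ref{prop:mp0},
\[
E(g(0))\leq \frac12\|g(0)\|_X^2+\frac{1}{q+1}\|g(0)\|_X^{q+1}+C\|g(0)\|_X^{p+1}\leq \varepsilon(k),
\]
where $\varepsilon(k)\to 0$ as $k\to 0$, while $E(g(1))<0$ by definition of $\Gamma(\rho^2)$. The lower bound for $\max_{t}E(g(t))$ is the key step: from the same estimates in the proof of Proposition \ref{prop:mp0}, shrinking $k$ if necessary we may ensure $G(u)>0$ on $A_k$; on the other hand, by Corollary \ref{lem:base}$(iii)$, $E(g(1))<0$ forces $G(g(1))<0$. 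Continuity of $\mu\mapsto G(g(\mu))$ then yields some $t^\star\in(0,1)$ with $g(t^\star)\in V(\rho^2)\subset W$, so by Proposition \ref{prop:infe0}
\[
\max_{t\in[0,1]}E(g(t))\geq E(g(t^\star))\geq \inf_{W}E=I>0.
\]

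Since this lower bound is uniform over $g\in\Gamma(\rho^2)$, passing to the infimum gives $I(\rho^2)\geq I>0$. Combined with $\max_g E(g(0))\leq \varepsilon(k)$ and $\max_g E(g(1))\leq 0$, choosing $k$ small enough so that $\varepsilon(k)<I/2$ yields the strict inequality
\[
I(\rho^2)>\max\left\{\max_{g\in\Gamma(\rho^2)}E(g(0)),\,\max_{g\in\Gamma(\rho^2)}E(g(1))\right\},
\]
i.e.\ the mountain pass geometry on $S(\rho^2)$. The only mildly delicate point is the verification that $G>0$ on $A_k$ for $k$ sufficiently small (so that the path must actually cross $V(\rho^2)$); this however follows from the lower bound on $G$ already derived in the proof of Proposition \ref{prop:mp0}, since for $\|u\|_X^2=b\ll 1$ the quantity $\min\{b/4,(b/4)^{(q+1)/2}\}$ dominates the term $C b^{(p+1)/2}$, and monotonicity in $b$ propagates the bound to the whole sub-level set $A_k$.
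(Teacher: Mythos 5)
Your proof is correct, but it takes a genuinely different route from the paper's. The paper's own proof consists in re-running the argument of Proposition \ref{prop:mp0} verbatim with $\Gamma(\rho^2)$ in place of $\Gamma_X$: the estimates \eqref{eq:figo}--\eqref{eq:vaazero} do not see the mass constraint, so on the $X$-sphere $C_{b_0}$ one still has $\inf_{C_{b_0}}E>\sup_{A_k}E>0$ for $k\ll b_0$, every admissible path in $S(\rho^2)$ crosses $C_{b_0}$ (its endpoint has negative energy, hence $X$-norm above $b_0$), and the strict inequality follows directly; this is self-contained and needs nothing beyond the ball/sphere estimates, plus the $L^2$-preserving scaling to see that $\Gamma(\rho^2)\neq\emptyset$. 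You instead obtain the uniform lower bound on $\max_t E(g(t))$ by forcing each path to cross the Pohozaev set: $G>0$ on $A_k\setminus\{0\}$ (which suffices, since every point of the path has mass $\rho^2$ and is therefore nontrivial), $G(g(1))<0$ by Corollary \ref{lem:base}$(iii)$, continuity gives $g(t^\star)\in V(\rho^2)\subset W$, and then Proposition \ref{prop:infe0} yields $E(g(t^\star))\geq\inf_W E=I$, with $I>0$ extracted from the proof of Proposition \ref{prop:mp0}. There is no circularity, since both cited propositions precede this one and are independent of the constrained problem, and your route has the small bonus of delivering the quantitative bound $I(\rho^2)\geq I=\inf_W E$, which the paper anyway recovers later (via Propositions \ref{prop:infe0} and \ref{prop:infe}) in the proof of Theorem \ref{theorem:main1}. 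The trade-offs are that you lean on Proposition \ref{prop:infe0} and on the tacit convention that $W$ consists of nontrivial functions (harmless in your use, as noted), and your closing remark about ``monotonicity in $b$'' is superfluous: the positivity of $G$ on each sphere $C_b$ with $0<b<b_0$ already covers all of $A_k\setminus\{0\}$.
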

\begin{proof}
Analogous to the proof of Proposition \ref{prop:mp0}, with $\Gamma(\rho^2)$ defined in \eqref{Gamma2} replacing $\Gamma_X$.

\end{proof}

\begin{proposition} \label{prop:infe}
The minimization problem satisfies $I(\rho^2) = \displaystyle \inf_{V(\rho^2)} E$.
\end{proposition}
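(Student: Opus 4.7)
The plan is to adapt the argument of Proposition \ref{prop:infe0} to the constrained setting, using the $L^2$-preserving scaling \eqref{eq:scaling} to keep the constructed paths on $S(\rho^2)$, and exploiting Proposition \ref{lem:growth} together with Corollary \ref{lem:base} to control $E$ and $G$ along them. The two inequalities $I(\rho^2)\le \inf_{V(\rho^2)} E$ and $I(\rho^2)\ge \inf_{V(\rho^2)} E$ are then established separately.

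For the upper bound, I would fix $v\in V(\rho^2)$. Since the scaling $v^\mu(x)=\mu^{3/2}v(\mu x)$ preserves the $L^2$ norm, every $v^\mu$ still belongs to $S(\rho^2)$, and by Corollary \ref{lem:base} there exist $0<\mu_1\ll 1$ and $\mu_2\gg 1$ with $v^{\mu_1}\in A_k$ and $E(v^{\mu_2})<0$. The continuous curve $\lambda\mapsto g(\lambda):=v^{(1-\lambda)\mu_1+\lambda\mu_2}$ then belongs to $\Gamma(\rho^2)$. Because $G(v)=0$, Proposition \ref{lem:growth}\,$(vi)$ gives $\tilde\mu(v)=1$, and Proposition \ref{lem:growth}\,$(iv)$ then yields $E(v^\mu)\le E(v)$ for every $\mu>0$. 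Consequently $\max_{\lambda\in[0,1]}E(g(\lambda))=E(v)$, so $I(\rho^2)\le E(v)$, and taking the infimum over $v\in V(\rho^2)$ gives the desired bound.

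For the reverse inequality, I would show that every admissible path must cross $V(\rho^2)$. Let $g\in\Gamma(\rho^2)$. Possibly shrinking the threshold $k$ chosen in the proof of Proposition \ref{prop:mp0}, the estimate
\[
G(u)\ge 3\left(\frac{q-1}{q+1}\right)\min\left\{\frac{\|u\|_X^2}{4},\left(\frac{\|u\|_X^2}{4}\right)^{\frac{q+1}{2}}\right\}-C\|u\|_X^{p+1}
\]
is strictly positive on $A_k\cap S(\rho^2)$, since any $u\in S(\rho^2)$ satisfies $\|u\|_X>0$; in particular $G(g(0))>0$. On the other hand, $E(g(1))<0$ forces $G(g(1))<0$ by Corollary \ref{lem:base}\,$(iii)$. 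Continuity of $K$, $N_q$, $N_p$ on $H^1$ makes $t\mapsto G(g(t))$ a continuous real-valued function on $[0,1]$, so the intermediate value theorem produces $t^\star\in(0,1)$ with $G(g(t^\star))=0$, i.e.\ $g(t^\star)\in V(\rho^2)$. Therefore $\max_{t\in[0,1]}E(g(t))\ge E(g(t^\star))\ge \inf_{V(\rho^2)}E$, and taking the infimum over $g\in\Gamma(\rho^2)$ concludes.

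The only potentially delicate point is the uniform positivity of $G$ on $A_k\cap S(\rho^2)$; this is already contained in the analysis carried out for Proposition \ref{prop:mp0}, and the mass constraint automatically rules out the trivial function, so the $X$-space argument transfers to the constrained setting without further modification.
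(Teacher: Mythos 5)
Your argument is correct and takes essentially the same route as the paper, whose proof simply adapts Proposition \ref{prop:infe0} with $W$ replaced by $V(\rho^2)$: the $L^2$-preserving scaled path through a given $v\in V(\rho^2)$ gives $I(\rho^2)\le \inf_{V(\rho^2)}E$, and the fact that every path in $\Gamma(\rho^2)$ crosses $V(\rho^2)$ gives the reverse inequality. Your explicit sign analysis of $G$ at the endpoints (via the small-$k$ estimate and Corollary \ref{lem:base}\,$(iii)$) just fills in the crossing step that the paper leaves as ``by continuity.''
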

\begin{proof}
Similar to the proof of Proposition \ref{prop:infe0}, replacing $W$ with $V(\rho^2).$
\end{proof}

\section{Monotonicity of  $I(\rho^2)$ and the connection with the solution to \eqref{EE}}
As stated in the Introduction, the mountain pass geometry implies the existence of a Palais-Smale sequence $\{u_n\}_n \subset S(\rho^2)$ such that
\[
E(u_n)=I(\rho^2)+o_n(1) \quad \hbox{ and } \quad \|E'|_{S(\rho^2)}(u_n)\|_{H^{-1}}=o_n(1).
\]
Inspired by an argument due to \cite{Gh, GP},  we can strengthen this information and select a specific sequence localized around $V(\rho^2)$, namely such that
$\hbox{dist}(u_n, V(\rho^2))=o_n(1)$. 
Let us notice that  for any $n\in\N$  and any $w\in V(\rho^2),$ we may write, for $\theta\in[0,1]$
\[
G(u_n)=G(w)+d G(\theta u_n+(1-\theta)w)(u_n-w)=dG(\theta u_n+(1-\theta)w)(u_n-w).
\]
Therefore, by choosing a sequence  $\{w_m\}_n\subset V(\rho^2)$ such that
\[
\|u_n-w_m\|\to \hbox{dist}(u_n,V(\rho^2))
\]
as $m\to\infty,$ we get that $G(u_n)=o_n(1)$, as $\hbox{dist}(u_n,V(\rho^2))\to0$.
In particular we can consider a Palais-Smale sequence for which $G(u_n)=o_n(1).$\\

We first recall  the following known property. It basically says that  the monotonicity of $I(\rho^2)$ implies compactness.
\begin{lemma}\label{lem:mono}
Let $\{u_n\}_n \subset S(\rho^2)$ be such that
$E(u_n)=I(\rho^2)+o_n(1)$, $\|E'|_{S(\rho^2)}(u_n)\|_{H^{-1}}=o_n(1)$, $G(u_n)=o_n(1)$ as $n\to\infty,$ and $I(\rho^2)<I(\mu^2)$ for any $0<\mu<\rho.$ Then,
up to a possible space translation sequence, we have $u_n \to u$ in $H^1,$ with  $E(u)=I(\rho^2)$ and $G(u)=0.$
Moreover, the ground state solution $u$ is real, positive and radially symmetric. 
\end{lemma}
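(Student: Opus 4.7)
The plan proceeds by extracting an $H^1$-bound from the identity \eqref{eq:egkn}, producing Lagrange multipliers, isolating a nontrivial weak limit via concentration-compactness, and finally running a Brezis-Lieb argument against the strict monotonicity of $I$ to upgrade to strong convergence. The symmetry and positivity are then obtained by comparing the ground state with its modulus and its Schwarz rearrangement.

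First, I would apply \eqref{eq:egkn} to $\{u_n\}$. Since $p \in (7/3, 5)$ and $q < p$, both coefficients on the right-hand side are strictly positive, so boundedness of $E(u_n) = I(\rho^2) + o(1)$ combined with $G(u_n) = o(1)$ bounds $K(u_n)$ and $N_q(u_n)$, and hence yields a uniform $H^1$-bound. The constrained Palais-Smale condition then furnishes Lagrange multipliers $\omega_n \in \R$; testing the approximate Euler-Lagrange equation against $u_n$ and invoking the $H^1$-bound shows $\omega_n$ is bounded, so $\omega_n \to \omega_\infty$ along a subsequence. If vanishing held in the sense that $\sup_{y \in \R^3} \int_{B(y,1)} |u_n|^2 \, dx \to 0$, Lions' lemma would yield $N_q(u_n), N_p(u_n) \to 0$; then $G(u_n) \to 0$ would force $K(u_n) \to 0$, and $E(u_n) \to 0$ would contradict $I(\rho^2) > 0$ (see Proposition \ref{prop:mp0}). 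Hence, along a translation sequence $y_n$, the functions $\tilde u_n(x) := u_n(x + y_n)$ converge weakly in $H^1$ to some $u \neq 0$ satisfying
\[
-\Delta u + \omega_\infty u + |u|^{q-1}u - |u|^{p-1}u = 0.
\]
Combining the Nehari and Pohozaev identities for this equation gives $G(u) = 0$, so with $\mu^2 := \|u\|_{L^2}^2 \in (0, \rho^2]$ we have $u \in V(\mu^2)$ and $E(u) \geq I(\mu^2)$.

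The crucial step is to show $\mu = \rho$. Setting $v_n := \tilde u_n - u$, Brezis-Lieb applied to the $L^{q+1}$ and $L^{p+1}$ terms together with weak-$H^1$ orthogonality of gradients yields
\[
E(\tilde u_n) = E(u) + E(v_n) + o(1), \quad G(\tilde u_n) = G(v_n) + o(1), \quad \|v_n\|_{L^2}^2 \to \rho^2 - \mu^2.
\]
If $\mu < \rho$, the strict monotonicity hypothesis gives $E(u) \geq I(\mu^2) > I(\rho^2)$, so $E(v_n) \to I(\rho^2) - E(u) < 0$. On the other hand, applying \eqref{eq:egkn} to $v_n$ and using $G(v_n) = o(1)$,
\[
E(v_n) \geq \tfrac{2}{3(p-1)}\, G(v_n) = o(1),
\]
a contradiction. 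Hence $\mu = \rho$, so $v_n \to 0$ in $L^2$; interpolation with the $H^1$-bound yields $v_n \to 0$ in $L^{q+1}$ and $L^{p+1}$, so $E(v_n), G(v_n) \to 0$, and a further application of \eqref{eq:egkn} forces $K(v_n), N_q(v_n) \to 0$. Consequently $\tilde u_n \to u$ strongly in $H^1$, $E(u) = I(\rho^2)$ and $G(u) = 0$.

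For the qualitative properties I would compare $u$ with its modulus $|u|$ and its Schwarz rearrangement $u^*$, both of which preserve the mass, $N_q$ and $N_p$, while $K$ is non-increasing. Denoting by $\tilde\mu_*$ the unique rescaling given by Proposition \ref{lem:growth}\,(ii) that sends $u^*$ into $V(\rho^2)$, the chain
\[
I(\rho^2) \leq E((u^*)^{\tilde\mu_*}) = \max_{\mu > 0} E((u^*)^\mu) \leq \max_{\mu > 0} E(u^\mu) = E(u) = I(\rho^2)
\]
is in fact a chain of equalities, which forces $K(u^*) = K(u)$ and, by the Brothers-Ziemer rigidity for Polya-Szego, $u = u^*$ up to translation; the identical argument with $|u|$ in place of $u^*$ shows $u$ has constant phase, so may be taken real and nonnegative, and the Strong Maximum Principle applied to the stationary equation delivers strict positivity. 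The main obstacle I anticipate is precisely the strict-monotonicity step: converting the hypothesis $I(\rho^2) < I(\mu^2)$ into compactness of the Palais-Smale sequence depends on the specific sign structure in \eqref{eq:egkn}, which is where the defocusing perturbation $|u|^{q-1}u$ cooperates with the intracriticality $p \in (7/3, 5)$ to close the argument.
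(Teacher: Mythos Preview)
Your proposal is correct and follows essentially the same route as the paper: boundedness via \eqref{eq:egkn}, a nontrivial weak limit after translation, Brezis--Lieb splitting of $E$ and $G$, and then \eqref{eq:egkn} combined with the strict monotonicity hypothesis to force $\mu=\rho$ and strong $H^1$ convergence. The only cosmetic differences are that the paper excludes vanishing via the $pqr$-Lemma together with Lieb's translation lemma rather than Lions' lemma, and it merely cites \cite{BZ} for the symmetry/positivity, whereas you spell out the rearrangement comparison explicitly.
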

\begin{proof}
Firstly, we notice that there exists a  Palais-Smale sequence  $\{u_n\}_n \subset S(\rho^2)$ such that $E(u_n)\to I(\rho^2)$ and $G(u_n)=o_n(1)$ which is bounded in $X,$ as consequence of \eqref{eq:egkn}. \\
Let us first show that   $ u \neq 0$. We claim that $\|u_n\|_{L^{p+1}}\geq c>0.$ On the contrary, by assuming that $\|u_n\|_{L^{p+1}}=o_n(1),$ by interpolation
we  get $\|u_n\|_{L^{q+1}}=o_n(1),$ and by the fact that $G(u_n)=o_n(1)$ we  conclude that $K(u_n)=o_n(1)$.
On the other hand  
\[
E(u_n)-\frac23 \left(\frac{1}{p-1}\right)G(u_n)=\left(\frac{3p-7}{6p-6}\right)K(u_n)+\left(\frac{p-q}{(q+1)(p-1)}\right)N_q(u_n)=I(\rho^2)+o_n(1),
\]
and hence the latter contradicts the fact that $\|u_n\|_{L^{p+1}}=o_n(1)$.
At this point, since $\|u_n\|_2^2=\rho^2$ for any $n$, and $\sup_n\|u_n\|_{L^6}\leq C$, the classical pqr-Lemma, see \cite{FLL}, implies that  there exists a $\eta>0$, such that 
\begin{equation*}\label{hyp2}
\inf_n \left|\{ |u_n|>\eta \}\right|>0.
\end{equation*}
Here  $|\cdot |$ denote the Lebesgue measure of a measurable set. This fact, together with the Lieb Translation Lemma, see \cite{Lieb}, guarantees  the existence of a sequence $\{x_n\}_n\subset\mathbb R^3$ such that a subsequence of $\{u_n(x+ x_n)\}_n$ has a weak limit $ u \not\equiv 0$ in $H^1$. Now, let us prove the weak convergence is actually a strong convergence. Since  $ u$ is not trivial, and is a weak solution to \eqref{cNLS2},  we can assume 
that $u \in V(\mu^2)$ for some  $0<\mu\leq \rho$.
By the Brezis-Lieb Lemma, we have that 
\begin{equation}\label{splittings}
\begin{aligned}
K(u_n- u)+ K( u)&=K(u_n)+o_n(1), \\ 
N_q(u_n- u)+N_q( u)&=N_q(u_n)+o_n(1),  \\ 
N_p(u_n- u)+N_p( u)&=N_p(u_n)+o_n(1).
\end{aligned}
\end{equation}
Since $E(u_n) \to I(\rho^2),$ the splittings \eqref{splittings} above give
\begin{equation*}\label{101}
E(u_n- u)+E( u)=I(\rho^2)+o_n(1),
\end{equation*}
and we also have
\begin{equation}\label{1020}
G(u_n- u)+G( u)=G( u_n)+o_n(1).
\end{equation}
Since $ u\in V(\mu^2)$ we have that $E(u) \geq I(\mu^2),$  and we deduce therefore that
 \begin{equation*}\label{eq:mono}
 E(u_n - u)+I(\mu^2)\leq I(\rho^2)+o_n(1).
 \end{equation*}
Now, by using the relation
\[\left(\frac{3p-7}{6p-6}\right)K(u_n-u)+\left(\frac{p-q}{(q+1)(p-1)}\right)N_q(u_n-u)= E(u_n-u)-\frac23 \left(\frac{1}{p-1}\right)G(u_n-u),
\]
by the fact that $G(u_n-u)=o_n(1),$ thanks to \eqref{1020}
we obtain that
\[\left(\frac{3p-7}{6p-6}\right)K(u_n-u)+\left(\frac{p-q}{(q+1)(p-1)}\right)N_q(u_n-u)+I(\mu^2)\leq I(\rho^2)+o_n(1).
\]
By the monotonicity of $I(\rho^2)$
we necessarily deduce that  $\mu^2 = \rho^2$, $K(u_n- u)=o_n(1)$ and $N_q(u_n- u)=o_n(1)$. This proves the strong convergence of $\{u_n\}_n$ in $H^1$. The ground state solution $u$ can be selected real, positive and radially symmetric, by the well known facts concerning the  symmetric decreasing rearrangement, see \cite{BZ} for a reference.
\end{proof}
In what follows, we focus on the proof of some qualitative properties of the functional $I(\rho^2).$

\begin{lemma}
$I(\rho)$ diverges as $\rho\to0,$ i.e. $\lim_{\rho \to 0}  I(\rho^2)=+\infty$.
 \end{lemma}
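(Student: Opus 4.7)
The plan is to show that on the Pohozaev set $V(\rho^2)$ both the kinetic energy and the energy itself blow up as $\rho\to 0$, which suffices thanks to Proposition \ref{prop:infe}.

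First, I would fix $u\in V(\rho^2)$ and exploit $G(u)=0.$ Since $q>1,$ the term $\frac{3(q-1)}{2(q+1)}N_q(u)$ is nonnegative, so from \eqref{eq:G} one gets
\[
K(u)\leq \frac{3}{2}\left(\frac{p-1}{p+1}\right)N_p(u).
\]
Next, I would bring in the three-dimensional Gagliardo--Nirenberg inequality for the $L^{p+1}$-norm, namely
\[
N_p(u)\leq C\,\|u\|_{L^2}^{\frac{5-p}{2}}\,K(u)^{\frac{3(p-1)}{4}}=C\rho^{\frac{5-p}{2}}K(u)^{\frac{3(p-1)}{4}},
\]
where the exponent $\frac{3(p-1)}{4}$ is strictly greater than $1$ precisely because $p>\frac{7}{3}$ (mass-supercriticality of the leading term). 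Inserting this into the previous inequality and rearranging, one obtains
\[
K(u)^{\frac{3p-7}{4}}\geq c\,\rho^{-\frac{5-p}{2}},
\]
i.e. $K(u)\geq c\,\rho^{-\alpha}$ for some $\alpha=\alpha(p)>0,$ uniformly in $u\in V(\rho^2).$

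To conclude, I would apply identity \eqref{eq:egkn} with $G(u)=0$: since $p>\frac{7}{3}$ and $q<p,$ both coefficients on the right-hand side are positive, so
\[
E(u)\geq \frac16\left(\frac{3p-7}{p-1}\right)K(u)\geq c'\,\rho^{-\alpha}.
\]
Taking the infimum over $V(\rho^2)$ and invoking Proposition \ref{prop:infe}, we obtain $I(\rho^2)\geq c'\rho^{-\alpha}\to+\infty$ as $\rho\to 0^+.$ There is no real obstacle here: the only point that needs care is ensuring the Gagliardo--Nirenberg exponent exceeds $1$, which is exactly the hypothesis $p>\frac{7}{3},$ and discarding the defocusing $N_q$ contribution (which only helps since $q>1$ keeps its sign favorable in both $G$ and the energy identity).
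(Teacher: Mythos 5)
Your argument is correct, but it takes a genuinely different route from the paper. You work directly on the Pohozaev set: from $G(u)=0$ and $q>1$ you get $K(u)\leq \frac{3}{2}\frac{p-1}{p+1}N_p(u)$, then the Gagliardo--Nirenberg inequality $N_p(u)\leq C\rho^{\frac{5-p}{2}}K(u)^{\frac{3(p-1)}{4}}$ (whose exponent exceeds $1$ exactly because $p>\frac73$) forces $K(u)\gtrsim \rho^{-\frac{2(5-p)}{3p-7}}$ uniformly on $V(\rho^2)$, and identity \eqref{eq:egkn} with $G(u)=0$ converts this into a lower bound for $E$, hence for $I(\rho^2)=\inf_{V(\rho^2)}E$ by Proposition \ref{prop:infe}. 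The paper instead compares with the pure focusing problem: it introduces $\tilde I(\rho^2)$ for $i\partial_t u+\Delta u=-|u|^{p-1}u$, computes $\tilde I(\rho^2)=C\rho^{\frac{2(p-5)}{3p-7}}$ by exploiting the exact scaling invariance of that equation, and then proves $I(\rho^2)\geq \tilde I(\rho^2)$ by a path/maximum comparison (a near-minimizer $\bar u\in V(\rho^2)$ has $\tilde G(\bar u)<0$, and rescaling it onto $\tilde V(\rho^2)$ gives $\tilde I\leq \tilde E(\bar u^{\tilde\mu})\leq \max_\mu E(\bar u^\mu)=E(\bar u)$). Your proof is more elementary and self-contained (no auxiliary variational problem, no comparison of the two mountain-pass levels), and it yields the same quantitative rate $\rho^{-\frac{2(5-p)}{3p-7}}$ that the paper obtains for $\tilde I$; the paper's approach buys a structural link between the combined-nonlinearity threshold and the classical single-power ground state energy, for which the rate is exact by scaling. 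I see no gap in your argument: the only implicit points are that $K(u)>0$ on $V(\rho^2)$ (true since $\|u\|_{L^2}=\rho>0$), and that the constants in the Gagliardo--Nirenberg step depend only on $p$, so the bound is indeed uniform over $V(\rho^2)$.
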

\begin{proof}
In order to prove the Lemma, we make a connection with the usual NLS equation with one focusing nonlinearity of power-type. Indeed, to prove that the limit at $0^+$ of $I(\rho^2)$ do exists, 
we  first notice that $\lim_{\rho \to 0}\tilde I(\rho^2)=+\infty,$ where  $\tilde I, \tilde E,$ and $\tilde V$ are defined by
\[
\tilde I(\rho^2)=\inf_{u \in \tilde V(\rho^2)}\tilde E(u),
\]  
\[
\tilde E(u)=\frac12K(u)-\frac{1}{p+1}N_p(u),
\]
and
\[
\tilde V(\rho^2)=\left\{ u \in S(\rho^2) \hbox{ such that }\tilde G(u)=0  \right\},
\] 
respectively, where
\[
\tilde G(u)=K(u)-\frac{3}{2}\left(\frac{p-1}{p+1}\right)N_p(f).
\]  

Now, provided we take $\tilde u_{\rho}\in \tilde V(\rho^2)$ with $\tilde E(\tilde u_{\rho})=\tilde I(\rho^2),$ by the fact that $e^{i \omega t}\tilde u$ is a standing wave solution for the focusing NLS equation 
\begin{equation}\label{eq:focNLS}
i\partial_{t}u+\Delta u=-|u|^{p-1}u,
\end{equation}
(namely, we are ignoring the defocusing perturbation in \eqref{normNLS}) it is clear that $\tilde E(\tilde u_{\rho})=C K(\tilde u_\rho),$ for some constant $C.$ Let us recall that \eqref{eq:focNLS} is invariant under the scaling $u^{\lambda}(t,x)= \lambda^{\frac{2}{p-1}} u(\lambda^2 t,\lambda x).$
Hence, if we take $\tilde u_1,$ with normalized $L^2$-norm $\|\tilde u_1\|_{L^2}=1$, we get that $\|\tilde u_1^{\lambda}\|_{L^2}^2=\lambda^{\frac{7-3p}{p-1}}$
and $K(\tilde u_1^{\lambda})=\lambda^{\frac{5-p}{p-1}} K(\tilde u_1)$. By defining $\rho^2=\lambda^{\frac{7-3p}{p-1}},$ we deduce that $\tilde E(\tilde u_{\rho})=C\rho^{\frac{2(p-5)}{3p-7}}$ and hence $\lim_{\rho \to 0}\tilde I(\rho^2)=+\infty,$ as $p\in\left(\frac73,5\right).$ \\

\noindent With the above considerations, the claim follows provided that  inequality $I(\rho^2)\geq \tilde I(\rho^2)$ holds true. To prove the latter inequality, let us take $\bar u \in V(\rho^2)$ such that $E(\bar u)=I(\rho^2)+\varepsilon,$ $\varepsilon>0.$ Clearly, $ \tilde G(\bar u)<0.$  By considering $\tilde \mu \neq 1$ such that $\tilde G(\bar u^{\tilde \mu})=0$, by definition of the mountain pass energy we have 
\[
\tilde I(\rho^2)\leq \tilde E(u_0^{\tilde \mu})= \max_{\mu}  \tilde E(u_0^{\mu})\leq \max_{\mu}   E(u_0^{\mu})=E(u_0)=I(\rho^2)+\varepsilon.
\]
By the arbitrariness of the choice of $\varepsilon,$ we have therefore the desired inequality $I(\rho^2)\geq \tilde I(\rho^2).$ The proof is therefore concluded as $\lim_{\rho \to 0}\tilde I(\rho^2)=+\infty.$ 
\end{proof}

According to Proposition \ref{lem:growth}, let us recall that we define by $\tilde\mu=\tilde\mu(v)$ the unique scaling parameter such that $G(v^{\tilde\mu})=0$.

\begin{lemma}\label{lem:regmax}
The function $\theta \mapsto E( (\theta u)^{\tilde\mu(\theta u)})$ is $C^1$ in a neighbourhood of $\theta=1$.
\end{lemma}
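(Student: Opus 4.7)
The plan is to apply the implicit function theorem to the equation $G((\theta u)^\mu) = 0$ in the unknown $\mu$, with parameter $\theta$ near $1$. The $C^1$ regularity of the composed map then follows by the chain rule.

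First, I would observe that $(\theta u)^\mu = \theta \, u^\mu$, so using the explicit expressions from \eqref{eq:resc:E} and \eqref{eq:resc:G} we may write
\[
F(\theta,\mu) := G((\theta u)^\mu) = \theta^{2}\mu^{2} K(u)+\tfrac{3}{2}\tfrac{q-1}{q+1}\theta^{q+1}\mu^{\frac{3}{2}(q-1)}N_q(u)-\tfrac{3}{2}\tfrac{p-1}{p+1}\theta^{p+1}\mu^{\frac{3}{2}(p-1)}N_p(u).
\]
This is manifestly $C^\infty$ in the pair $(\theta,\mu)\in(0,\infty)\times(0,\infty)$, and similarly the map $(\theta,\mu)\mapsto E((\theta u)^\mu)$ is $C^\infty$. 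At $(\theta,\mu)=(1,\tilde\mu(u))$ we have $F(1,\tilde\mu(u))=0$ by definition of $\tilde\mu(u)$.

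The crucial step is to verify that $\partial_\mu F(1,\tilde\mu(u))\neq 0$. Combining parts $(i)$ and $(iii)$ of Proposition \ref{lem:growth}, at $\mu=\tilde\mu(u)$ we compute
\[
\frac{d^2}{d\mu^2}E(u^\mu)\Big|_{\mu=\tilde\mu(u)}=\frac{d}{d\mu}\!\left(\frac{G(u^\mu)}{\mu}\right)\!\Big|_{\mu=\tilde\mu(u)}=\frac{1}{\tilde\mu(u)}\,\partial_\mu F(1,\tilde\mu(u)),
\]
where we used $G(u^{\tilde\mu(u)})=0$. Since the left-hand side is strictly negative by $(iii)$, we conclude $\partial_\mu F(1,\tilde\mu(u))<0$; in particular, it does not vanish.

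The implicit function theorem then produces a $C^1$ function $\theta\mapsto M(\theta)$ on a neighbourhood of $\theta=1$ satisfying $F(\theta,M(\theta))=0$ and $M(1)=\tilde\mu(u)$. By the uniqueness in part $(ii)$ of Proposition \ref{lem:growth}, necessarily $M(\theta)=\tilde\mu(\theta u)$ on that neighbourhood. Finally, the composition
\[
\theta\mapsto E((\theta u)^{\tilde\mu(\theta u)})=E\big(\theta\, u^{M(\theta)}\big)
\]
is $C^1$ as a composition of the $C^1$ map $\theta\mapsto(\theta,M(\theta))$ with the smooth map $(\theta,\mu)\mapsto E((\theta u)^\mu)$. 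The only conceptual step is the non-degeneracy of $\partial_\mu F$, which is already encoded in Proposition \ref{lem:growth}$(iii)$; the rest is mechanical.
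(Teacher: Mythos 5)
Your proof is correct and follows essentially the same route as the paper: the paper also applies the implicit function theorem to the stationarity condition in $\mu$ (there written as $\partial_\mu g(\theta,\mu)=0$ for $g(\theta,\mu)=E((\theta u)^\mu)$, which by Proposition \ref{lem:growth}$(i)$ is the same equation as your $G((\theta u)^\mu)=0$), with the non-degeneracy supplied by the strict concavity at the maximizer from Proposition \ref{lem:growth}$(iii)$. Your explicit check that $\partial_\mu F(1,\tilde\mu(u))=\tilde\mu(u)\,\frac{d^2}{d\mu^2}E(u^\mu)\big|_{\mu=\tilde\mu(u)}<0$ and the identification $M(\theta)=\tilde\mu(\theta u)$ via uniqueness match the paper's argument in substance.
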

\begin{proof}
Let us define the function,
\[
\begin{aligned}
f(\theta)&=\max_{\mu>0}\{E((\theta u)^\mu)\}\\
&=\max_{\mu >0} \left \{ \frac 12 \mu^2 \theta^2K(u)  + \frac{1}{q+1}\mu^{\frac 32(q-1)} \theta^{q+1}N_q(u) - \frac{1}{p+1} \mu^{\frac32 (p-1)}\theta^{p+1} N_p(u) \right \}.
\end{aligned}
\]
To simplify the notation, let us call $a=\frac 12 K(u)$, $b=\frac{1}{q+1} N_q(u)$, and  $c=   \frac{1}{p+1} N_p(u).$ Let
\[
g(\theta,\mu)= a \theta^2  \mu ^2+b \theta^{q+1} \mu^{\frac 32(q-1)} - c \theta^{p+1}  \mu^{\frac 32(p-1)},
\]
then
\[
\partial_{\mu} g(\theta, \mu)=2a \theta^2  \mu +\frac{3b}{2}(q-1) \theta^{q+1} \mu^{\frac{3}{2}(q-1)-1} - \frac{3c}{2}(p-1)\theta^{p+1}  \mu^{\frac 32(p-1)-1}
\]
and
\[
\partial_{\mu}^2 g(\theta, \mu)=2a \theta^2  - \frac{3b}{2}(q-1)\left(\frac {3q-5}{2}\right) \theta^{q+1} \mu^{\frac 32(q-1)-2}  - 
\frac{3c}{2}(p-1)\left(\frac{3p-5}{2}\right) \theta^{p+1} \mu^{\frac 32(p-1)-2}.
\]
By Proposition \ref{lem:growth}, for a $\theta_0>0$ there exists a unique $\mu_0=\mu_0(\theta_0)>0$, such that $\partial_{\mu} g(\theta_0,\mu_0)=0$ (see point $(i)$), and $\partial_{\mu\mu}^2 g(\theta_0,\mu_0)<0$ (see point $(iii)$), thus $f(\theta_0)=g(\theta_0,\mu_0)$. Then, by  applying the Implicit Function Theorem to the function $\partial_{\mu} g(\theta, \mu)$, we deduce the existence of a continuous function $\mu=\mu(\theta)$ in some neighbourhood  $\mathcal O_{\theta_0}$ of $\theta_0$ which satisfies $\partial_{\mu} g(\theta,\mu(\theta))=0$ for any $\theta\in\mathcal O_{\theta_0},$ and $\partial_{\mu\mu}^2 g(\theta,\mu(\theta))<0$.  Now, since the function $g(\theta, \mu)$ is $C^1$ in $(\theta, \mu)$, it follows that $f(\theta)$ is $C^1$.
\end{proof}

\begin{lemma} \label{lem:regmax2} For $p \in \left(\frac{7}{3},5\right)$, $1<q<p$, and $a>0$, $b>0,$ $c>0,$ the function  $(a,b,c)\mapsto f(a,b,c)$ where 
\[
f(a,b,c)=\max_{z>0} \left \{ az^2+bz^{\frac32(q-1)} - cz^{\frac{3}{2}(p-1)} \right \},
\]
is continuous on $\mathbb{R}^+\times \mathbb{R}^+ \times \mathbb{R}^+$.
\end{lemma}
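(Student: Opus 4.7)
The plan is to fix an arbitrary base point $(a_0,b_0,c_0)\in\mathbb R^+\times\mathbb R^+\times\mathbb R^+$ and exhibit $f$ as a continuous composition in a neighbourhood of it. Setting $g(z;a,b,c)=az^2+bz^{\frac32(q-1)}-cz^{\frac32(p-1)}$, I first observe that for $(a,b,c)=(a_0,b_0,c_0)$ the one-variable function $z\mapsto g(z;a_0,b_0,c_0)$ fits into the hypotheses of either Lemma~\ref{lemmino1}/Corollary~\ref{co:lemmino1} (when $1<q<\tfrac73$, so that $\alpha=\tfrac32(q-1)<2<\beta=\tfrac32(p-1)$) or of Lemma~\ref{lemmino2} (when $\tfrac73\le q<p$, so that $2\le \alpha<\beta$ and $\beta>2$). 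In both regimes these lemmas yield a unique maximizer $z_0=z_0(a_0,b_0,c_0)>0$ with $\partial_{zz}g(z_0;a_0,b_0,c_0)<0$, so that $f(a_0,b_0,c_0)=g(z_0;a_0,b_0,c_0)$.

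Next I would apply the Implicit Function Theorem to the $C^1$ map
\[
h(z;a,b,c):=\partial_z g(z;a,b,c)=2az+\tfrac32(q-1)b z^{\frac32(q-1)-1}-\tfrac32(p-1)c z^{\frac32(p-1)-1}
\]
at the point $(z_0;a_0,b_0,c_0)$. The partial derivative $\partial_z h = \partial_{zz} g$ is non-zero at that point by the step above, so there exist a neighbourhood $\mathcal O$ of $(a_0,b_0,c_0)$ and a continuous (indeed $C^1$) function $\mathcal O\ni (a,b,c)\mapsto \zeta(a,b,c)\in\mathbb R^+$ such that $\zeta(a_0,b_0,c_0)=z_0$ and $h(\zeta(a,b,c);a,b,c)=0$ for all $(a,b,c)\in\mathcal O$.

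To conclude, I need to identify $\zeta(a,b,c)$ with the global maximizer of $z\mapsto g(z;a,b,c)$. But the uniqueness argument in the first step applies verbatim to any $(a,b,c)\in\mathbb R^+\times\mathbb R^+\times\mathbb R^+$, since Lemmas~\ref{lemmino1} and \ref{lemmino2} only use positivity of the coefficients and the fixed exponent conditions. Hence for each $(a,b,c)\in\mathcal O$ the function $g(\cdot;a,b,c)$ has a unique critical point in $(0,\infty)$, which is the global maximum; this critical point must coincide with $\zeta(a,b,c)$. Consequently $f(a,b,c)=g(\zeta(a,b,c);a,b,c)$ on $\mathcal O$, and continuity of $f$ at $(a_0,b_0,c_0)$ follows from continuity of $\zeta$ together with the joint continuity of $g$ in all its arguments.

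The main (mild) obstacle is precisely the last identification step: the Implicit Function Theorem only guarantees a local branch of zeros of $\partial_z g$, and one must rule out that the perturbed function $g(\cdot;a,b,c)$ develops spurious critical points or that the maximum jumps to a different branch. This is handled by reapplying the uniqueness statements of Lemma~\ref{lemmino1} and Lemma~\ref{lemmino2} to the perturbed parameters, which is legitimate since those lemmas hold under the same standing hypotheses $b,c>0$ that are preserved in a small enough neighbourhood $\mathcal O$. Since $(a_0,b_0,c_0)$ was arbitrary, continuity of $f$ on the whole positive octant follows.
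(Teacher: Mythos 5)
Your proposal is correct and takes essentially the same route as the paper: the paper's proof of Lemma \ref{lem:regmax2} simply defers to the argument of Lemma \ref{lem:regmax}, i.e.\ nondegeneracy of the maximizer (via Lemma \ref{lemmino1}/Corollary \ref{co:lemmino1} and Lemma \ref{lemmino2}) followed by the Implicit Function Theorem applied to $\partial_z g$, with the branch identified with the global maximizer. One small attribution point: for $q\geq\frac73$ the uniqueness of the critical point for perturbed $(a,b,c)$ comes from the elementary factorization argument in Proposition \ref{lem:growth}$(ii)$ (valid for all positive coefficients), not from Lemma \ref{lemmino2}, which only gives nondegeneracy at a maximum; with that reference your identification step is exactly the paper's.
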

\begin{proof}
The proof is similar to the one of Lemma \ref{lem:regmax2}.
\end{proof}
We can eventually prove the weak monotonicity of $I(\rho^2)$. 
\begin{lemma}\label{lem:nonincreasing}
The function $\rho \mapsto  I(\rho^2)$ is continuous in $(0, \infty)$ and nonincreasing.
\end{lemma}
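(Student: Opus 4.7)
The plan is to establish monotonicity first and then bootstrap to continuity.

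\emph{Monotonicity.} Given $0<\rho_1<\rho_2$ and $\varepsilon>0$, I pick $v\in V(\rho_1^2)$ with $E(v)<I(\rho_1^2)+\varepsilon$, noting that $E(v)=\max_{\mu>0}E(v^\mu)$ by Proposition~\ref{lem:growth}. The idea is to manufacture a competitor $u_n\in S(\rho_2^2)$ whose triple $(K(u_n),N_q(u_n),N_p(u_n))$ converges to $(K(v),N_q(v),N_p(v))$; Lemma~\ref{lem:regmax2}, applied to the explicit polynomial $\mu\mapsto E(u_n^\mu)$, will then yield
\[
I(\rho_2^2)\leq \max_{\mu>0}E(u_n^\mu)\longrightarrow \max_{\mu>0}E(v^\mu)=E(v)<I(\rho_1^2)+\varepsilon,
\]
and sending $\varepsilon\to 0$ concludes. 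The competitor is built as a ``bump far away'': fix $\phi\in C_c^\infty(\mathbb R^3)$ with $\|\phi\|_{L^2}^2=\rho_2^2-\rho_1^2$, let $\phi_n(x)=n^{-3/2}\phi(x/n)$ (mass-preserving, with $K(\phi_n)=O(n^{-2})$ and $N_q(\phi_n),N_p(\phi_n)=O(n^{-3(q-1)/2}),\,O(n^{-3(p-1)/2})\to 0$ since $q,p>1$), truncate $v$ to $v_{M_n}$ compactly supported in $B_{M_n}$ with $v_{M_n}\to v$ in $H^1\cap L^{q+1}\cap L^{p+1}$, and pick $R_n\gg M_n+n$ so that $u_n=v_{M_n}+\phi_n(\cdot-R_n e_1)$ is a sum of functions with disjoint supports. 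By disjointness $K,N_q,N_p$ split additively and converge to those of $v$; a small cosmetic rescaling restores the mass to exactly $\rho_2^2$.

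\emph{Continuity.} Let $\rho_n\to\rho>0$. By the just-proved monotonicity, $\rho_n\leq\rho$ forces $I(\rho_n^2)\geq I(\rho^2)$ and $\rho_n\geq\rho$ forces $I(\rho_n^2)\leq I(\rho^2)$, so it suffices to establish the two matching semicontinuities. For $\rho_n\to\rho^-$, choose $v\in V(\rho^2)$ near-optimal and set $\theta_n=\rho_n/\rho\to 1^-$, so that $\theta_n v\in S(\rho_n^2)$ and
\[
I(\rho_n^2)\leq \max_{\mu>0}E((\theta_n v)^\mu) = f\!\left(\tfrac{\theta_n^{2} K(v)}{2},\,\tfrac{\theta_n^{q+1}N_q(v)}{q+1},\,\tfrac{\theta_n^{p+1}N_p(v)}{p+1}\right),
\]
with $f$ as in Lemma~\ref{lem:regmax2}; this converges to $E(v)<I(\rho^2)+\varepsilon$. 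For $\rho_n\to\rho^+$, I pick $v_n\in V(\rho_n^2)$ with $E(v_n)<I(\rho_n^2)+\varepsilon$; monotonicity gives $E(v_n)\leq I(\rho^2)+\varepsilon$, and the identity \eqref{eq:egkn} combined with $G(v_n)=0$ yields uniform bounds on $K(v_n)$ and $N_q(v_n)$, hence also on $N_p(v_n)$. Scaling by $\tilde\theta_n=\rho/\rho_n\to 1^-$ gives $\tilde\theta_n v_n\in S(\rho^2)$, and the continuity of $f$ on compacts provides $\max_\mu E((\tilde\theta_n v_n)^\mu)=E(v_n)+o(1)$, whence $I(\rho^2)\leq \liminf_n I(\rho_n^2)+\varepsilon$.

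\emph{Main obstacle.} The delicate step is the bump construction in the monotonicity part: one must carefully balance the truncation radius $M_n$, the spreading parameter $n$, and the translation $R_n$ so as to simultaneously (i) keep the supports of $v_{M_n}$ and the translated bump disjoint, (ii) drive $K,N_q,N_p$ of the bump to $0$, and (iii) restore the mass to exactly $\rho_2^2$. Once this is in place the remainder is a continuous-dependence argument through Lemma~\ref{lem:regmax2}, and the continuity half bootstraps on the monotonicity half.
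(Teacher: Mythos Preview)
Your argument is correct and follows essentially the same route as the paper: for monotonicity you truncate a near-minimizer on $S(\rho_1^2)$, attach a far-away $L^2$-bump whose $K$, $N_q$, $N_p$ contributions vanish, and invoke the continuity of $(a,b,c)\mapsto\max_{\mu}\{a\mu^2+b\mu^{3(q-1)/2}-c\mu^{3(p-1)/2}\}$ (Lemma~\ref{lem:regmax2}); the only cosmetic differences are that the paper places the bump in an annulus and tunes its amplitude to hit the target mass exactly, whereas you translate and then rescale. For continuity the paper simply declares it ``standard'', while you supply a clean $\theta$-scaling argument together with the uniform $X$-bounds coming from \eqref{eq:egkn} and $G(v_n)=0$; this is a welcome addition and requires no further justification beyond noting that $f$ extends continuously to $b=0$ and is uniformly continuous on bounded sets with $a,c$ bounded away from zero (the lower bounds on $K(v_n)$ and $N_p(v_n)$ follow from $G(v_n)=0$ combined with Gagliardo--Nirenberg).
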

\begin{proof}
The continuity of  $\rho \mapsto  I(\rho^2)$ is a standard fact. To show that  $\rho \mapsto I(\rho^2)$ is nonincreasing, we follow the approach in \cite[Lemma 5.3]{BJL}. It is enough to verify that, for any $\rho_1<\rho_2$ and $\varepsilon >0$ arbitrary, we have
\[
I(\rho_2^2)\leq I(\rho_1^2)+\varepsilon.
\]
By definition of $I(\rho_1^2)$, there exists $u_1 \in V(\rho_1^2)$ such that $E(u_1)\leq  I(\rho_1^2)+ \frac{\varepsilon}{2}$
with
\begin{equation*}\label{444}
E(u_1)=\max_{\mu>0}E(u_1^{\mu}).
\end{equation*}
We truncate $u_1$ with a compactly supported function as follows. Let $\eta \in C_0^{\infty}(\mathbb{R}^3)$ be a radial cut-off function such that $\eta:\R^3\mapsto[0,1]$
\begin{equation}\label{eq:cutrad}
\eta (x)=\begin{cases}
1, &  \hbox{ for } \quad |x|\leq 1\\
0, & \hbox{ for } \quad  |x |\geq 2
\end{cases}.
\end{equation}
For any  $\delta>0$, let
\[
\widetilde{u}_{1,\delta}(x)=\eta(\delta x) u_1(x).
\]
It is standard to show that
$\widetilde{u}_{1,\delta}(x) \to u_1(x)$  in $ H^1$ as $ \delta \to 0.$ Then, by continuity, we have, as $\delta \to 0$,
\begin{equation*}\label{avv}
\begin{aligned}
K(\widetilde{u}_{1,\delta})  &\to K(u_1),\\
N_{q}(\widetilde{u}_{1,\delta}) &\to N_q(u_1), \\
N_{p}(\widetilde{u}_{1,\delta}) & \to N_p(u_1). 
\end{aligned}
\end{equation*}
At this point, using Lemma \ref{lem:regmax2}, we deduce that there exists $\delta >0$ small enough, such that
\begin{equation*}\label{77}
\max_{\mu>0}E(\widetilde{u}_{1,\delta}^\mu)= \max_{\mu>0} E(u_1^\mu)   + \frac{\varepsilon}{4}.
\end{equation*}
Now let $v(x)\in C_0^{\infty}(\mathbb{R}^3)$ be radial and such that $\hbox{supp}\,v \subset B_{2R_{\delta}+1}\backslash B_{2R_{\delta}}$, where we defined $R_\delta=\frac2\delta.$ Then we introduce the function
\[ 
v_0=\frac{\rho_2^2- \|\widetilde{u}_{1,\delta}\|_{L^2}^2}{\|v\|_{L^2}^2}\cdot v,
\]
for which we have $\|v_0\|_{L^2}^2=\rho^2_2- \|\widetilde{u}_{1,\delta}\|_{L^2}^2$. Finally, using the scaling \eqref{eq:scaling}, letting $v_0^{\mu}=\mu^{\frac{3}{2}}v_0(\mu x)$, for $\mu \in (0,1)$, we have $\|v_0^{\mu}\|_{L^2}^2=\|v_0\|_{L^2}^2$ and the rescaled quantities
\begin{equation}\label{eq7}
\begin{aligned}
K( v_0^{\mu} )&= K( v_0 ) \mu^2,\\
N_q(v_0^{\mu})  &= N_q(v_0)\mu^{\frac 32(q-1)} ,\\
N_p(v_0^{\lambda} ) &= N_p(v_0 )\mu^{\frac{3}{2}(p-1)} .
\end{aligned}
\end{equation}
Now, for any $\mu \in (0,1)$ we define $w_{\mu}=\widetilde{u}_{1,\delta} + v_0^{\mu}$. We observe that
\begin{equation*}\label{suport}
\hbox{dist}\{\hbox{supp}\, \widetilde{u}_{1,\delta}, \hbox{supp}\, v_0^{\mu} \} \geq \frac{2R_{\delta}}{\mu}-R_{\delta}=\frac{2}{\delta}\left(\frac{2}{\mu}-1\right)>0.
\end{equation*}
It follows that $\|w_{\mu}\|_2^2=\|\widetilde{u}_{1,\delta}\|_2^2+\|v_0^{\mu}\|_2^2$ and $w_{\mu} \in S(\rho_2^2)$. Furthermore, 
\begin{equation}\label{eq11}
\begin{aligned}
K( w_{\mu})&=K(\widetilde{u}_{1,\delta})+K(v_0^{\mu}), \\
N_q(w_{\mu})&=N_q(\widetilde{u}_{1,\delta}) + N_q (v_0^{\mu}),\\
N_p(w_{\mu})&=N_p(\widetilde{u}_{1,\delta}) + N_p(v_0^{\mu}).
\end{aligned}
\end{equation}
From \eqref{eq11} and \eqref{eq7} we see that, as $\mu\to0,$
\begin{equation*}\label{14}
\begin{aligned}
K(w_{\mu}) &\to K(\widetilde{u}_{1,\delta} ),  \\
N_q(w_{\mu}) &\to N_q(\widetilde{u}_{1,\delta}) \\
N_p(w_{\mu})& \to N_p(\widetilde{u}_{1,\delta} ).
\end{aligned}
\end{equation*}
Thus,  we have that, by fixing $\mu>0$ small enough,
\begin{equation*}\label{15}
\max_{s>0}E(w_{\mu}^s) \leq \max_{s>0} E(\widetilde{u}_{1,\delta}^s)+\frac{\varepsilon}{4}.
\end{equation*}
At this point we can conclude  with the following:
\begin{equation*}
I(\rho_2^2)\leq \max_{s>0}E(w_{\mu}^s)\leq \max_{s>0} E(\widetilde{u}_{1,\delta}^s)+\frac{\varepsilon}{4} 
\leq \max_{s>0} E(u_1^s) + \frac{\varepsilon}{2} 
= E(u_1) + \frac{\varepsilon}{2}  \leq I(\rho_1^2)+\varepsilon,
\end{equation*}
and this ends the proof.
\end{proof}

We are able to prove the result concerning the connection between  the  function $I(\rho^2)$ and the existence of solutions to \eqref{EE}.
\begin{lemma}\label{lem:necessary}
If the   function $\rho \mapsto   I(\rho^2)$ is not a strictly monotone decreasing  function in $(0, \rho]$, then there exist  $\rho_0\in (0, \rho)$ and $u_0\in S(\rho_0^2)$ such that $u_0$ is a positive radially symmetric solution  to \eqref{EE}.
\end{lemma}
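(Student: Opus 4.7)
The plan is to locate an interval of constancy of $I$, extract a ground state at an interior point via the standard Palais--Smale/translation argument, and then use the constancy together with a scaling argument to force the Lagrange multiplier of the weak limit to vanish.

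By Lemma \ref{lem:nonincreasing}, the map $\rho\mapsto I(\rho^2)$ is nonincreasing, so failure of strict decrease on $(0,\rho]$ produces $\rho_1<\rho_2\leq\rho$ with $I(\rho_1^2)=I(\rho_2^2)$, and hence $I$ is constant on the whole interval $[\rho_1,\rho_2]$. I will fix any $\rho_3\in(\rho_1,\rho_2)$, so $\rho_3\in(0,\rho)$. Following the strategy of Lemma \ref{lem:mono}, I would select a Palais--Smale sequence $\{u_n\}\subset S(\rho_3^2)$ with $E(u_n)\to I(\rho_3^2)$, $\|E'|_{S(\rho_3^2)}(u_n)\|_{H^{-1}}=o_n(1)$, and $G(u_n)=o_n(1)$; the identity \eqref{eq:egkn} yields boundedness in $X$ and the lower bound $\|u_n\|_{L^{p+1}}\geq c>0$ as in Lemma \ref{lem:mono}. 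Lieb's translation lemma then provides, after translating, a non-trivial weak limit $u_0\in H^1$ solving
\[
-\Delta u_0+\omega u_0+|u_0|^{q-1}u_0-|u_0|^{p-1}u_0=0
\]
for some $\omega\in\R$. The Pohozaev identity for this equation yields $G(u_0)=0$, so with $\mu^2:=\|u_0\|_{L^2}^2\leq\rho_3^2$ we have $u_0\in V(\mu^2)$ and hence $E(u_0)\geq I(\mu^2)$.

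Next, the Brezis--Lieb splittings as in Lemma \ref{lem:mono} give $G(u_n-u_0)=o_n(1)$; inserting $u_n-u_0$ in \eqref{eq:egkn} (whose right-hand side is non-negative) yields $E(u_n-u_0)\geq o_n(1)$, and passing to the limit in $E(u_n)=E(u_0)+E(u_n-u_0)+o_n(1)$ forces $E(u_0)\leq I(\rho_3^2)$. Combined with $E(u_0)\geq I(\mu^2)\geq I(\rho_3^2)$ by monotonicity, all three quantities coincide, so $E(u_0)=I(\mu^2)=I(\rho_3^2)=I(\rho_2^2)$; by monotonicity $I$ is then constant on the possibly enlarged interval $[\mu,\rho_2]$. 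This propagation of constancy down to the mass of the weak limit is the subtle ingredient, and it is where the hypothesis enters nontrivially.

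To force $\omega=0$, I set $f(\theta):=\max_{\eta>0}E\bigl((\theta u_0)^\eta\bigr)=E\bigl((\theta u_0)^{\tilde\mu(\theta u_0)}\bigr)$; since $(\theta u_0)^{\tilde\mu(\theta u_0)}\in V(\theta^2\mu^2)$ by Proposition \ref{lem:growth}, we have $f(\theta)\geq I(\theta^2\mu^2)$, with equality at $\theta=1$ because $\tilde\mu(u_0)=1$ and $u_0$ achieves $I(\mu^2)$. Lemma \ref{lem:regmax} ensures $f$ is $C^1$ near $\theta=1$. The constancy of $I$ on $[\mu,\rho_2]$ gives $I(\theta^2\mu^2)=I(\mu^2)=f(1)$ for $\theta\in[1,\rho_2/\mu]$, while monotonicity gives $I(\theta^2\mu^2)\geq I(\mu^2)=f(1)$ for $\theta\leq 1$; hence $f(\theta)\geq f(1)$ in a neighbourhood of $1$, so $\theta=1$ is a local minimum of $f$ and $f'(1)=0$. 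Differentiating the rescaled energy expression at $(\theta,\eta)=(1,1)$ gives $f'(1)=K(u_0)+N_q(u_0)-N_p(u_0)$, which by the Nehari identity for the Lagrange equation equals $-\omega\mu^2$. Thus $\omega=0$, and $u_0$ solves \eqref{EE} with $\|u_0\|_{L^2}^2=\mu^2\in(0,\rho^2)$. Positivity and radial symmetry are then obtained from a standard Schwarz symmetric decreasing rearrangement (combined with the strong maximum principle applied to $|u_0|$), exactly as in the last sentence of the proof of Lemma \ref{lem:mono}. The main obstacle in the plan is the one-sided comparison $f(\theta)\geq f(1)$: it requires $I$ to be constant on a right-neighbourhood of $\mu$, which is not automatic since $\mu$ may be strictly less than $\rho_3$, and it is only delivered by the Brezis--Lieb bound propagating the constancy from $\rho_3$ down to $\mu$.
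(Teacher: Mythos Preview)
Your argument is correct, and the derivative computation at $\theta=1$ matches the paper's. The route, however, differs from the paper's in one organizing choice.

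The paper does not work at an arbitrary interior point of a plateau. Instead, it sets $\Lambda=\inf_{(0,\bar\rho)}I$ and takes $\lambda=\inf\{s\in(0,\bar\rho]:I(s^2)=\Lambda\}$, i.e.\ the \emph{leftmost} point where the infimum value is attained. By construction $I$ is strictly decreasing on $(0,\lambda)$, so Lemma~\ref{lem:mono} applies verbatim at mass $\lambda$: one gets strong $H^1$-convergence and a positive, radially symmetric ground state $u_0\in S(\lambda^2)$ in a single stroke. Moreover $\lambda$ is an interior minimum of $I$ on $(0,\bar\rho)$, so the two-sided inequality $I(\lambda^2)\le I(\theta^2\lambda^2)$ for $\theta$ near $1$ is immediate, and the computation $f'(1)=K(u_0)+N_q(u_0)-N_p(u_0)=0$ follows.

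Your approach picks $\rho_3$ inside the plateau, extracts only a weak limit $u_0$ at a possibly smaller mass $\mu$, and then needs the Brezis--Lieb step to propagate constancy of $I$ from $\rho_3$ down to $\mu$ before the derivative argument can be run. This is the step you flag as the main obstacle, and your treatment of it is sound. The price is that positivity and radial symmetry are not free: your $u_0$ is a priori just a weak limit, so you must symmetrize and then re-invoke the natural-constraint/derivative argument for the symmetrized minimizer (your last sentence glosses over this, though at the same level of detail as the paper's corresponding appeal to rearrangement in Lemma~\ref{lem:mono}). The paper's choice of $\lambda$ buys economy: strong convergence, symmetry, and the two-sided variational inequality all come at once, with no propagation-of-constancy step needed.
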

\begin{proof}
If the   function $\rho \mapsto   I(\rho^2)$ is not a  strictly monotone decreasing  function in $(0, \rho]$ there exists $\bar \rho \in (0, \rho)$ such that, given the quantities $\Lambda$ and $\lambda$ defined by
\[
\Lambda:=\inf_{\rho \in (0, \bar \rho)} I(\rho^2)
\]
and 
\[
\lambda=\inf \left\{\rho \in (0, \bar \rho] \quad\hbox{ such that }\quad   I(\rho^2)=\Lambda \right\},
\]
then we have $\lambda<\bar \rho$. Notice that $\lim_{\rho \to 0}   I(\rho^2)=+\infty$  and hence $\lambda>0$.\\
By construction, $I(\lambda^2)<I(s^2)$
for any $0<s< \lambda,$ and the ground state at the mountain pass energy level for the constrained problem with mass $\lambda$ is achieved by Lemma \ref{lem:mono}. Moreover, this ground state can be chosen  positive and radially symmetric. Let us call $u_0$
such ground state with mass $\lambda$. We have that the function $ I(\rho^2)$ has a global minimum in $\lambda$ in the interval $(0, \bar \rho )$ and thus
\[
E(u_0)=  I(\lambda^2)\leq    I(\theta^2\lambda^2)\leq  E\left((\theta u)^{\tilde\mu(\theta u_0)}\right)
\]
for $\theta \in (1-\epsilon, 1+\epsilon)$. We deduce that
\[
\frac{d}{d \theta} \left( E((\theta u_0)^{\tilde\mu(\theta u_0))} \right)_{|_{\theta=1}}=0,
\]
which  in turn gives
\[
\frac{d}{d \theta}\left(\frac{(\tilde\mu(\theta u_0))^2}{2}\theta^2K(u_0) +\frac{(\tilde\mu(\theta u_0))^{\frac 32(q-1)}}{q+1}\theta^{q+1}N_q(u_0)- \frac{(\tilde\mu(\theta u_0))^{\frac 32(q-1)}}{p+1}\theta^{p+1} N_p(u_0)\right)_{|_{\theta=1}}=0.
\]
Direct computations give, using that $\tilde\mu (\theta u)$ is $C^1$ and  $\tilde\mu(\theta u_0)_{|_{\theta=1}}=1$, 
\[
\begin{aligned}
\frac{d}{d \theta} \left( E((\theta u_0)^{\tilde\mu(\theta u_0))} \right)_{|_{\theta=1}}&=\left( K(u_0)+\frac{3}{2}\left(\frac{q-1}{q+1}\right)N_q(u_0)-\frac{3}{2}\left(\frac{p-1}{p+1}\right)N_p(u_0)\right)\frac{d}{d \theta} \tilde\mu(\theta u_0)_{|_{\theta=1}}\\
&+K(u_0)+N_q(u_0)-N_p(u_0).
\end{aligned}
\]
By recalling that $K(u_0)+\frac{3}{2}\left(\frac{q-1}{q+1}\right)N_q(u_0)-\frac{3}{2}\left(\frac{p-1}{p+1}\right)N_p(u_0)=0$ as $G(u_0)=0$,   we get
\begin{equation}\label{eq:0lag}
K(u_0)+N_q(u_0)-N_p(u_0)=0.
\end{equation}
Now, $u_0$ solves
\[
-\Delta u_0+\omega u_0 +|u_0|^{p-1}u_0-|u_0|^{q-1}u_0=0
\]
where $\omega$ is the associated Lagrange multiplier.
From \eqref{eq:0lag} we obtain that $\omega=0$ and hence that $u_0$ solves the zero mass case equation \eqref{EE}. It is therefore enough to set $\rho_0=\lambda$ to conclude the proof of the Lemma. 
\end{proof}

\section{Proof of Theorem \ref{theorem:main1}}
This section is concerned with a proof of the main Theorem  \ref{theorem:main1}. Let us consider the zero-mass elliptic equation \eqref{EE}, which we rewrite:
\begin{equation*}
-\Delta u+|u|^{q-1}u-|u|^{p-1}u=0.
\end{equation*}
By Berestycki-Lions \cite{BL}, we know that it
admits a real, nonnegative, radial, and nonincreasing solution $U\in X=\dot H^{1}\cap L^{q+1}.$ In \cite{BL}, the ground state for the  equation above is found as a minimizer of the constrained minimization problem
\begin{equation}\label{eq:minBL}
\inf \left\{ \frac 12 K(u) \,\hbox{ such that }\, u\in X, \hbox{ and } \frac{1}{p+1}N_p(u)- \frac{1}{q+1}N_q(u)=1\right\}.
\end{equation}
The minimizer, that can be chosen positive and radially symmetric, solves
\[
-\Delta u=\lambda(u^p-u^q)
\]
for some $\lambda>0,$ which can be normalized by a suitable rescaling, in order to get a solution to \eqref{EE}. Moreover, by \cite{KZ,LN}, $U$ is unique. A priori, $U\notin L^2.$ Nonetheless, in \cite{DS,DSW}, the precise decay of the solution $U$  was proved.
In particular 
\begin{equation*}
|U(r)|\sim \frac{1}{r^{\alpha}} \quad\hbox{ with }\quad \alpha=\max\left\{\frac{2}{q-1}, 1\right\},
\end{equation*}
and hence $U\in L^2$ if and only if $1<q<\frac73$, i.e.  the defocusing nonlinearity is mass-subcritical.\\
\\
On the other hand, the functional $E$ has a mountain pass geometry in $X,$ and called $I$ the mountain pass energy, and $u_1$ the function in $X$ that minimizes
\eqref{eq:minBL}, we have that

\begin{equation}\label{I-BV}
\begin{aligned}
I=\max_{s>0}E\left(u_1\left(\frac{x}{s^{1/3}}\right)\right)&=\max_{s>0}\left\{\frac{1}{2}K(u_1)s^{\frac{1}{3}}-s\left(\frac{1}{p+1}N_p(u_1)- \frac{1}{q+1}N_q(u_1)\right)\right\}\\
&=\max_{s>0}\left\{\frac{1}{2}K(u_1)s^{\frac{1}{3}}-s\right\}.
\end{aligned}
\end{equation}
This connection between the mountain pass energy and the constrained minimization of \cite{BL} has been proved, in a general framework, in \cite[Theorem 0.1]{BV}.\\
In particular, $\max_{s>0}\left\{\frac{1}{2}K(u_1)s^{\frac{1}{3}}-t\right\}$ can be computed explicitly,  and we get that 
\begin{equation*}\label{eq:bevi}
I=\left(\frac{1}{2}\frac{1}{6^{1/2}}-\frac{1}{6^{3/2}}\right)K(u_1)^{\frac 32}=\frac{2}{6^{3/2}}K(u_1)^{\frac 32}.
\end{equation*}
By calling $s_0=\left(\frac{K(u_1)}{6}\right)^{\frac 32}$ the maximum of the function $h(s)=\frac{1}{2}K(u_1)s^{1/3}-s,$ we have that $U=u_1\left(\frac{x}{s_0^{1/3}}\right)$, and hence from \eqref{I-BV} we also obtain that the mountain pass energy is given $I=E(U)=E\left(u_1(\cdot/s_0^{1/3})\right)$.\\

\noindent\textbf{Mass-subcritical perturbations.} By Proposition \ref{prop:mp}, $E(u)$ has a mountain pass geometry on $S(\rho^2)$. By Lemma \ref{lem:mono} and Lemma \ref{lem:necessary},
when $1<q<\frac73,$ the function $\rho \mapsto I(\rho^2)$ is strictly decreasing for  $\rho \in (0, \rho_c]$, where $\rho_c^2=\|U\|_{L^2}^2$, and a positive and radially symmetric
ground state $u_{\rho}$ exists for any $\rho$ in that interval.\\

When $\rho=\rho_c$, we claim that $U=u_{\rho_c}.$ Indeed
\begin{equation*}\label{eq:abc}
I =  E(U)=\inf_{W} E=\inf_{V(\rho_c^2)} E=I(\rho_c^2)=E(u_{\rho_c})\leq I(\theta^2\rho_c^2)\leq E\left((\theta u_{\rho_c})^{\tilde\mu(\theta u_{\rho_c})}\right)
\end{equation*}
for $\theta \in (1-\epsilon, 1+\epsilon),$ and we recall that the space  $W$ was defined in \eqref{eq:space-W} by
\[
W=\left\{u\in X  \,\hbox{ such that } \, G(u)=0\right\}.
\]
Arguing as in Lemma \ref{lem:necessary} we get 
\[
\frac{d}{d \theta} \left( 
E \left( 
(\theta u_{\rho_c})^{\tilde\mu(\theta u_{\rho_c})}\right) 
\right)_{\vert_{\theta=1}}=0,
\]
and hence $u_{\rho}$ is a positive solution of $\eqref{EE}$, i.e. $U=u_{\rho_c}.$\\

By the fact that $I(\rho_c^2)=I= \displaystyle \inf_{W} E\leq \displaystyle \inf_{V(\rho^2)} E$ for any $\rho$ and that Lemma \ref{lem:nonincreasing} holds true, then $I(\rho^2)$ is constant for $\rho\geq \rho_c$.
If $I(\rho^2)$ were achieved for some $\rho>\rho_c,$ arguing again as in Lemma \ref{lem:necessary} we would find again
$\frac{d}{d \theta} \left( E((\theta u_{\rho})^{\tilde\mu(\theta u_{\rho}))} \right)_{|_{\theta=1}}=0$
by the fact that $I=I(\rho^2)\leq I(\theta^2\rho ^2)\leq E((\theta u_{\rho})^{\tilde\mu(\theta u_{\rho})})$.
Moreover, we  would find a second positive radially symmetric solution to \eqref{EE} which is different from $U,$ and this is forbidden by the uniqueness result.\\

\noindent\textbf{Mass-critical or mass-supercritical perturbations.} By Lemma \ref{lem:mono} and Lemma \ref{lem:necessary},
when $\frac73\leq q<p$ the function $\rho \mapsto I(\rho^2)$ is strictly decreasing for  $\rho \in (0, \infty),$ because  positive, radially symmetric solutions to \eqref{EE} do not belong to $L^2,$ and hence  a positive, radially symmetric
ground state $u_{\rho}$ exists for any $\rho$ in the  positive half-line.
\\
To prove that $\lim_{\rho\to \infty}I(\rho^2)=E(U),$ we argue as in Lemma \ref{lem:nonincreasing}, by considering  a radial cut-off function $\eta \in C_0^{\infty}(\mathbb{R}^3;[0,1])$ such that  \eqref{eq:cutrad} holds.
For any small $\delta>0$, let
\[
U_\delta(x)=\eta(\delta x) U(x).
\]
Clearly, $\|U_{\delta}\|_{L^2}^2=\rho_{\delta}^2\to \infty$ when $\delta\to 0$ and
\begin{equation*}\label{avv2}
\begin{aligned}
K(U_{\delta})  &\to K(U),\\
N_{q}(U_{\delta})& \to N_q(U),\\
N_{p}(U_{\delta})  &\to N_p(U). 
\end{aligned}
\end{equation*}
As in Lemma  \ref{lem:nonincreasing}, for a fixed $\varepsilon>0$ there exists $\delta>0$ small enough such that
\[
I(\rho_{\delta}^2)\leq \max_{\mu>0}E(U_{\delta}^{\mu})\leq \max_{\mu >0} E(U)+\varepsilon=I+\varepsilon,
\]
and hence $\lim_{\rho\to \infty}I(\rho^2)=E(U)$.\\

The proof of Theorem \ref{theorem:main1} is concluded.


\section{Blow-up}
This section is devoted to the proof of the  blow-up results for solutions to \eqref{NLS}.  In order to prove Theorem \ref{thm:blowup},  we introduce some fundamental estimates.  
\begin{lemma}\label{lem:2.2}
Let us suppose that the initial datum $u_0$ satisfies  $E(u_0)<I(\|u_0\|_{L^2}^2)$ and $G(u_0)<0.$ Then $G(u(t))<0$ for any $t\in(-T_{min}, T_{max}).$ More precisely, there exists a positive constant $\delta>0$ such that $G(u(t))\leq-\delta$ for any $t\in(-T_{min}, T_{max}).$ 
\end{lemma}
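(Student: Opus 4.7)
The plan is to prove the result in two stages: first, the qualitative statement $G(u(t))<0$ by a continuity argument, and second, the quantitative gap $G(u(t))\le -\delta$ by exploiting the rescaling $u^\mu$ and the variational characterization $I(\rho_0^2)=\inf_{V(\rho_0^2)} E$.

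For the qualitative part, I would set $\rho_0^2=\|u_0\|_{L^2}^2$ and note that, by mass conservation, $u(t)\in S(\rho_0^2)$ for every $t$ in the maximal existence interval. Suppose by contradiction that $G(u(t))\ge 0$ at some time. Since $t\mapsto G(u(t))$ is continuous (by continuity of the flow in $H^1$) and $G(u_0)<0$, there is a first time $\bar t$ with $G(u(\bar t))=0$. Then $u(\bar t)\in V(\rho_0^2)$, so by Proposition \ref{prop:infe} one has $E(u(\bar t))\ge \inf_{V(\rho_0^2)}E=I(\rho_0^2)$. But energy conservation gives $E(u(\bar t))=E(u_0)<I(\rho_0^2)$, a contradiction.

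For the quantitative bound, define the positive constant
\[
\delta:=I(\rho_0^2)-E(u_0)>0,
\]
which is independent of $t$. Fix $t$ and apply Proposition \ref{lem:growth} to $u(t)$: let $\tilde\mu=\tilde\mu(u(t))$ be the unique scaling producing a point of $V(\rho_0^2)$. Since $G(u(t))<0$ by the previous step, point $(v)$ yields $\tilde\mu<1$. Set $g(\mu):=E(u(t)^\mu)$. By $(i)$, $g'(\mu)=G(u(t)^\mu)/\mu$, and by $(iii)$, $g$ is concave on $[\tilde\mu,\infty)$, hence $g'$ is nonincreasing there. On the one hand, the variational characterization together with energy conservation gives
\[
g(\tilde\mu)-g(1)=E(u(t)^{\tilde\mu})-E(u(t))\ge I(\rho_0^2)-E(u_0)=\delta.
\]
On the other hand, by the mean value theorem there exists $\xi\in(\tilde\mu,1)$ with
\[
g(\tilde\mu)-g(1)=-g'(\xi)(1-\tilde\mu)\le -g'(1)(1-\tilde\mu)=-G(u(t))(1-\tilde\mu)\le -G(u(t)),
\]
using $g'(\xi)\ge g'(1)$ (monotonicity of $g'$) and $1-\tilde\mu<1$. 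Combining the two inequalities yields $G(u(t))\le -\delta$, as desired.

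The only delicate point I anticipate is ensuring the strict sign needed in $(v)$ of Proposition \ref{lem:growth}, together with correctly extracting the uniform lower bound $\delta$ from the variational inequality; once the concavity of $\mu\mapsto E(u^\mu)$ on $[\tilde\mu,\infty)$ is invoked, the argument is essentially forced. No compactness or Palais--Smale machinery is required because energy conservation and mass conservation transfer the assumption $E(u_0)<I(\|u_0\|_{L^2}^2)$ uniformly along the flow.
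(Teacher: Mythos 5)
Your proof is correct and follows essentially the same route as the paper: the sign of $G(u(t))$ is propagated by continuity together with the characterization $I(\rho_0^2)=\inf_{V(\rho_0^2)}E$ and energy conservation, and the uniform gap comes from the concavity of $\mu\mapsto E(u(t)^\mu)$ on $[\tilde\mu,\infty)$ combined with the mean value theorem on $[\tilde\mu,1]$. If anything, your explicit use of $0<1-\tilde\mu<1$ to extract the $t$-independent constant $\delta=I(\rho_0^2)-E(u_0)$ is slightly more careful than the paper's write-up, whose stated $\delta$ formally involves the $t$-dependent factor $(1-\tilde\mu)^{-1}$.
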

\begin{proof}
By the absurd, if $G(u(t))>0$ for some time $t\in(-T_{min}, T_{max}),$ then by the continuity in time of the function $G(u(t)),$ there exists $\tilde t$ such that $G(u(\tilde t))=0.$ By definition of the functional $I(\rho),$ we have therefore $I(\|u_0\|_{L^2}^2)\leq E(u(\tilde t))=E(u_0),$ which is a contradiction with respect to the hypothesis.

The uniform lower bound away from zero is proved as follows. We simply write $u$ for the time-dependent solution $u(t).$ By Proposition \ref{lem:growth} there exists a scaling parameter  $\tilde\mu\in(0,1)$ such that $G(u^{\tilde\mu})=0.$ Then 
\[
E(u_0)-E(u^{\tilde\mu})=(1-\tilde\mu)\frac{d}{d\mu}E(u^\mu)\vert_{\mu=\bar\mu}
\] 
for some $\bar\mu\in(\tilde\mu,1),$ and due to the concavity of $\mu\mapsto E(u^\mu)$ -- see again Proposition \ref{lem:growth} -- we have that
\[
E(u_0)-E(u^{\tilde\mu})=(1-\tilde\mu)\frac{d}{d\mu}E(u^\mu)\vert_{\mu=\bar\mu}\geq (1-\tilde\mu) \frac{d}{d\mu}E(u^\mu)\vert_{\mu=1}=(1-\tilde\mu)G(u).
\] 
The last equality in the above chain of relations comes from the last statement in Proposition \ref{lem:growth}. Hence 
\[
G(u)\leq (1-\tilde\mu)^{-1}( E(u_0)-E(u^{\tilde\mu})).
\]
As $G(u^{\tilde\mu})=0,$ we have that $I(\|u_0\|_{L^2}^2)\leq E(u^{\tilde\mu})$ by definition, thus 
\[
G(u(t))\leq -(1-\tilde\mu)^{-1} (I(\|u_0\|_{L^2}^2)-E(u_0)).
\]
The proof is complete by choosing $\delta:=-(1-\tilde\mu)^{-1}( I(\|u_0\|_{L^2}^2)-E(u_0)).$
\end{proof}

As a consequence of Lemma \ref{lem:2.2}, we can actually provide a  pointwise-in-time upper bound for the Pohozaev functional $G$ evaluated at $u(t).$ 
\begin{lemma}\label{lem:2.3}
Under the hypothesis of Lemma \ref{lem:2.2}, there exists a strictly positive constant $\epsilon>0,$ such that $G(u(t))\leq-\epsilon \|u(t)\|_{\dot H^1}^2$ for any $t\in(-T_{min},T_{max}).$
\end{lemma}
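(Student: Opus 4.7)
The plan is to combine the algebraic identity \eqref{eq:egkn} with the uniform positive lower bound on $-G(u(t))$ from Lemma \ref{lem:2.2}, so as to absorb the constant $E(u_0)$ into the coefficient of $K(u(t))$.

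First, rearranging \eqref{eq:egkn} and using conservation of energy $E(u(t)) = E(u_0)$, I get the pointwise-in-time identity
\[
-G(u(t)) \;=\; \frac{3p-7}{4}\,K(u(t)) \;+\; \frac{3(p-q)}{2(q+1)}\,N_q(u(t)) \;-\; \frac{3(p-1)}{2}\,E(u_0).
\]
Since $p > 7/3$ and $p > q$, both coefficients on the right-hand side are strictly positive, while $N_q(u(t)) \geq 0$; discarding the nonnegative middle term gives
\[
-G(u(t)) \;\geq\; \frac{3p-7}{4}\,K(u(t)) \;-\; \frac{3(p-1)}{2}\,E(u_0).
\]
If $E(u_0) \leq 0$, the claim follows at once with $\epsilon := (3p-7)/4 > 0$.

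When $E(u_0) > 0$, the displayed right-hand side may be negative for small values of $K(u(t))$; to remedy this, I will take a convex combination of the previous inequality with the $t$-uniform bound $-G(u(t)) \geq \delta > 0$ obtained from Lemma \ref{lem:2.2} (one may take $\delta := I(\|u_0\|_{L^2}^2) - E(u_0)$, since $(1-\tilde\mu)^{-1} \geq 1$ throughout the proof of Lemma \ref{lem:2.2}). For any $\theta \in [0,1]$, writing $-G = (1-\theta)(-G) + \theta(-G)$ and applying each bound to the corresponding summand yields
\[
-G(u(t)) \;\geq\; \theta\,\frac{3p-7}{4}\,K(u(t)) \;+\; \Bigl[(1-\theta)\,\delta \;-\; \theta\,\frac{3(p-1)}{2}\,E(u_0)\Bigr].
\]
The choice $\theta := \delta \big/ \bigl(\delta + \tfrac{3(p-1)}{2}\,E(u_0)\bigr) \in (0,1)$ cancels the bracketed constant and delivers the lemma with $\epsilon := \theta\,(3p-7)/4 > 0$. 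A single constant valid for both signs of $E(u_0)$ is then given by the minimum of the two values produced above.

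The argument poses no real obstacle. The crucial algebraic ingredient is the strict positivity of the coefficient $(3p-7)/4$ in front of $K(u(t))$, which is precisely where the mass-supercriticality hypothesis $p > 7/3$ enters; the uniformity of $\epsilon$ in $t$ is automatic because $\delta$ and $E(u_0)$ depend only on the initial datum. Notably, the method only relies on the energy-conservation identity \eqref{eq:egkn} and Lemma \ref{lem:2.2}, and avoids any recourse to Gagliardo--Nirenberg inequalities or further variational tools.
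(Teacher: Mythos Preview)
Your proof is correct and follows essentially the same route as the paper: both arguments combine the algebraic identity \eqref{eq:egkn} (after dropping the nonnegative $N_q$ term) with the uniform bound $G(u(t))\leq -\delta$ from Lemma~\ref{lem:2.2}, and then choose the small parameter so that the constant $E(u_0)$ is absorbed. Your convex-combination formulation with the explicit choice of $\theta$ is just a cleaner packaging of the paper's ``take $\epsilon$ small enough'' step; the parenthetical remark that $\delta = I(\|u_0\|_{L^2}^2)-E(u_0)$ works uniformly (since $(1-\tilde\mu)^{-1}\geq 1$) is a nice clarification.
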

\begin{proof}
Firstly, by recalling the identity \eqref{eq:egkn}, we write
\[
K=\frac{6(p-1)}{3p-7}\left( E-\frac{2}{3(p-1)}G-\frac{p-q}{(q+1)(p-1)}N_q\right).
\]
It straightforwardly follows that 
\[
\begin{aligned}
G+\epsilon K&=\left(1-\frac{4\epsilon}{3p-7}\right)G+\frac{6\epsilon(p-1)}{3p-7}E-\frac{6\epsilon(p-q)}{(3p-7)(q+1)}N_q\\ 
&\leq \left(1-\frac{4\epsilon}{3p-7}\right)G+\frac{6\epsilon(p-1)}{3p-7}E,
\end{aligned}
\]
as the last term in the right-hand side in the first line is negative. Provided $\epsilon\ll1$ we get therefore 
\[
G+\epsilon K\leq (1-\varepsilon)G+\varepsilon^\prime E
\]
for some $\varepsilon, \varepsilon^\prime\ll1.$ Hence, by using Lemma \ref{lem:2.2}, we obtain (possibly by refining the choice of $\varepsilon$)
\[
G+\epsilon K\leq (1-\varepsilon)\delta+\varepsilon^\prime E\leq0.
\]
\end{proof}
A corollary of the of the previous Lemma is a uniform lower bound for the $\dot H^1$-norm of the solution to \eqref{NLS}.
\begin{corollary}\label{coro1}
Under the hypothesis of Lemma \ref{lem:2.2}, there exists a positive constant $c>0$ such that 
\[
\inf_{t\in(-T_{min},T_{max})}\|u(t)\|_{\dot H^1}\geq c.
\]
\end{corollary}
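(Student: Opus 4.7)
The plan is to combine the pointwise-in-time upper bound on $G(u(t))$ coming from Lemma \ref{lem:2.3} with the Gagliardo-Nirenberg inequality, and to exploit the mass-supercriticality $p>\tfrac73$ in order to invert the resulting relation and bound $K(u(t))$ from below.

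Concretely, I would start by rewriting $G(u(t))+\epsilon K(u(t))\leq 0$ using the explicit expression \eqref{eq:G} and discarding the nonnegative $N_q(u(t))$ term, which produces
\[
(1+\epsilon)\, K(u(t)) \leq \frac{3(p-1)}{2(p+1)}\, N_p(u(t)),
\]
so $N_p$ controls $K$ from above, uniformly in $t$. Next I would invoke the Gagliardo-Nirenberg inequality in $\R^3$, valid because $p<5$, which goes in the opposite direction:
\[
N_p(u(t)) \leq C\, \|u(t)\|_{L^2}^{5-p}\, K(u(t))^{\sigma}, \qquad \sigma:=\frac{3(p-1)}{4}.
\]
Conservation of mass reduces the $L^2$ factor to a constant $C(M(u_0))$ depending only on $u_0$, and the condition $p>\tfrac73$ is exactly what guarantees $\sigma>1$. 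Chaining the two estimates yields
\[
c_1\, K(u(t)) \leq C(M(u_0))\, K(u(t))^\sigma,
\]
and dividing by $K(u(t))$, which is strictly positive whenever the solution is not identically zero, gives $K(u(t))^{\sigma-1}\geq c_1/C(M(u_0))$, hence the desired uniform lower bound $\|u(t)\|_{\dot H^1}\geq c$.

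The argument is essentially a concatenation of Lemma \ref{lem:2.3} with standard Gagliardo-Nirenberg and mass conservation, so I do not foresee any substantial obstacle. The only point requiring attention is the strict inequality $\sigma>1$, which is encoded exactly in the mass-supercriticality hypothesis $p>\tfrac73$ that is in force throughout the paper. As a technical aside, the same conclusion could be reached directly from Lemma \ref{lem:2.2} (which gives $-G(u(t))\geq\delta>0$), since the nonnegativity of $K(u(t))$ and $N_q(u(t))$ in $G$ then forces $N_p(u(t))\geq\frac{2(p+1)}{3(p-1)}\delta$; the subsequent Gagliardo-Nirenberg step is identical.
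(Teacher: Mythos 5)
Your proposal is correct and follows essentially the same route as the paper: the paper obtains the bound by contradiction (if $\|u(t_n)\|_{\dot H^1}\to 0$ along a sequence of times, then Gagliardo-Nirenberg together with mass conservation forces $\|u(t_n)\|_{L^{p+1}}+\|u(t_n)\|_{L^{q+1}}\to 0$ and hence $G(u(t_n))\to 0$, contradicting Lemma \ref{lem:2.2}), which is exactly the qualitative version of your quantitative chain, and in particular your closing aside based on $G(u(t))\leq-\delta$ is the closest match. The only slip is cosmetic: the mass factor in your Gagliardo-Nirenberg inequality should be $\|u(t)\|_{L^2}^{(5-p)/2}$ rather than $\|u(t)\|_{L^2}^{5-p}$, which is immaterial since the mass is conserved.
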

\begin{proof}
The existence of a sequence of times $\{t_n\}_{n}\subset \R$ such that $\lim_{n\to\infty}\|u(t_n)\|_{\dot H^1}=0,$ would imply that  $\lim_{n\to\infty}\left(\|u(t_n)\|_{L^{p+1}}+\|u(t_n)\|_{L^{q+1}}\right)=0$ as well, by employing the Gagliardo-Nirenberg's inequality. But then $G(u(t_n))\to0,$ which is in contrast with respect to what Lemma \ref{lem:2.2} claims.
\end{proof}

We can now give a proof of the blow-up results as stated in Theorem \eqref{thm:blowup}. In order to employ a convexity argument, \cite{Gla, OT}, we introduce the following virial functional.
Define 
\[
V_\phi(t)=\int \phi|u(t)|^2\,dx.
\]
 where $\phi$ is a time-independent smooth cut-off function.  Usual calculations (see \cite{TC}) imply,  by using the equation solved by $u(t),$
\[
V_\phi^\prime(t)=2\Im \left\{\int\nabla\phi \bar u\nabla u\,dx\right\}
\]
and
\begin{equation*}
\begin{aligned}
V_\phi^{\prime\prime}(t)&=4\Re \left\{\int\nabla^2\phi  \nabla u\nabla\bar u\,dx\right\}-\int\Delta^2 \phi |u|^2\,dx\\
&+2\left(1-\frac{2}{q+1}\right)\int\Delta\phi |u|^{q+1}\,dx-2\left(1-\frac{2}{p+1}\right)\int\Delta\phi |u|^{p+1}\,dx\\
&=4\Re \left\{\int\nabla^2\phi  \nabla u\nabla\bar u\,dx\right\}-\int\Delta^2 \phi |u|^2\,dx\\
&+2\left(\frac{q-1}{q+1}\right)\int\Delta\phi |u|^{q+1}\,dx-2\left(\frac{p-1}{p+1}\right)\int\Delta\phi |u|^{p+1}\,dx\\
&=4\Re \left\{\int\nabla^2\phi  \nabla u\nabla\bar u\,dx\right\}-\int\Delta^2 \phi |u|^2\,dx+\int \Delta \phi n_{p,q}(u)dx,
\end{aligned}
\end{equation*}
where we have introduced the notation 
\[
n_{p,q}(u)=\frac{2(q-1)}{q+1}|u|^{q+1}-\frac{2(p-1)}{p+1}|u|^{p+1}
\]
to denote the density (up to some constants) of the potential energy. To lighten the notation, we omitted the dependence on time of $u.$ 
\begin{proposition} \label{rem-viri-iden}
Provided the cut-off function is smooth enough, we have the following identities:
\begin{itemize}
		\item[(i)] If $\phi(x) = |x|^2$, 	
		\begin{equation}\label{eq:variance}
		V_\phi^{\prime\prime}(t) = 8G(u(t)).
		\end{equation}
		\item[(ii)] If $\phi$ is radially symmetric, by denoting $|x|=r,$ we have		
		\begin{equation*}\label{cor:ii}
		\begin{aligned}
		V_\phi^{\prime\prime}(t) &= -\int \Delta^2 \phi(x) |u|^2 dx + 4\int \frac{\phi'(r)}{r} |\nabla u|^2 dx \\
		& + 4 \int \left(\frac{\phi''(r)}{r^2} - \frac{\phi'(r)}{r^3} \right) |x\cdot \nabla u|^2 dx \\
		&+\int \Delta \phi(x)n_{p,q}(u)dx. 
		\end{aligned}
		\end{equation*}
		\item[(iii)] If $\phi$ is radial and $u$ is  radial as well, then 
		\begin{equation}\label{eq:vir-rad}
		V_\phi^{\prime\prime}(t) = -\int \Delta^2 \phi(x)|u|^2 dx  + 4 \int \phi''(r) |\nabla u|^2 dx +\int \Delta \phi(x)n_{p,q}(u) dx
\end{equation}

\item[(iv)]  Let $\psi: \R^2 \to \R$  a smooth radial function. By setting $\phi(x) = \psi(\bar x) + x_3^2,$ provided $u(t) \in \Sigma_3 $ for all $t\in (-T_-,T_+)$, then we have
		\begin{equation}\label{eq:vir-cyl}
		\begin{aligned}
		 V_\phi^{\prime\prime}(t)&= -\int \Delta^2_{\bar x} \psi(\bar x) |u|^2 dx + 4\int \psi''(\rho) |\nabla_{\bar x} u|^2  dx \\
		& + 8 \|\partial_{x_3} u\|^2_{L^2}+\int (2+\Delta_{\bar x}\psi(\bar x)) n_{p,q}(u)dx,
		\end{aligned}
		\end{equation}
		where $\rho = |\bar x|.$ 
	\end{itemize}
\end{proposition}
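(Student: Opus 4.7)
The plan is to apply the general virial identity
\[
V_\phi''(t) = 4\Re\int \nabla^2\phi\,\nabla u\cdot\nabla\bar u\,dx - \int \Delta^2\phi\,|u|^2\,dx + \int \Delta\phi\,n_{p,q}(u)\,dx,
\]
which is derived in the lines immediately preceding the proposition, and specialize the three derivative tensors $\nabla^2\phi$, $\Delta\phi$ and $\Delta^2\phi$ to each of the four choices of $\phi$. The smoothness hypothesis on $\phi$ in each case is enough to justify all the integrations by parts hidden in the derivation.

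For (i), $\phi(x)=|x|^2$ gives $\nabla^2\phi=2I$, $\Delta\phi=6$ and $\Delta^2\phi=0$. Substituting and unpacking the definition of $n_{p,q}$ produces
\[
V_\phi''(t)=8K(u)+12\left(\frac{q-1}{q+1}\right)N_q(u)-12\left(\frac{p-1}{p+1}\right)N_p(u)=8G(u)
\]
by \eqref{eq:G}. For (ii), a radial $\phi$ admits the classical Hessian decomposition
\[
(\nabla^2\phi)_{ij}=\frac{\phi'(r)}{r}\,\delta_{ij}+\left(\phi''(r)-\frac{\phi'(r)}{r}\right)\frac{x_ix_j}{r^2},
\]
whose contraction against $\nabla u\otimes\nabla\bar u$ yields precisely the two gradient terms $\frac{\phi'(r)}{r}|\nabla u|^2$ and $\bigl(\frac{\phi''(r)}{r^2}-\frac{\phi'(r)}{r^3}\bigr)|x\cdot\nabla u|^2$, while the $\Delta\phi$ and $\Delta^2\phi$ integrals are unchanged. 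For (iii), a radial $u$ satisfies $\nabla u=u'(r)x/r$, hence $|x\cdot\nabla u|^2=r^2|\nabla u|^2$, which collapses the two gradient terms from (ii) into the single term $4\phi''(r)|\nabla u|^2$.

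For (iv) with $\phi(x)=\psi(\bar x)+x_3^2$, the Hessian is block-diagonal: the $\bar x$-block equals $\nabla^2_{\bar x}\psi(\bar x)$ and the $x_3$-entry equals $2$. Since $u\in\Sigma_3$ is radial in $\bar x$, the two-dimensional analogue of (iii) applied to the $\bar x$-block produces the contribution $4\int\psi''(\rho)|\nabla_{\bar x}u|^2\,dx$, whereas the $x_3$-entry yields $4\Re\int 2|\partial_{x_3}u|^2\,dx=8\|\partial_{x_3}u\|_{L^2}^2$. Combining this with $\Delta\phi=\Delta_{\bar x}\psi+2$ and $\Delta^2\phi=\Delta_{\bar x}^2\psi$ delivers \eqref{eq:vir-cyl}. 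The only mildly delicate bookkeeping is the mixed Cartesian/cylindrical splitting in (iv), but the block-diagonal structure of $\nabla^2\phi$ reduces it immediately to the two-dimensional radial case already handled in (iii), so no step is truly obstructive: everything is tensor algebra combined with the general virial identity.
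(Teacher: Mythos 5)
Your proposal is correct and follows exactly the route the paper intends: the paper derives the general identity $V_\phi''(t)=4\Re\int\nabla^2\phi\,\nabla u\nabla\bar u\,dx-\int\Delta^2\phi\,|u|^2dx+\int\Delta\phi\,n_{p,q}(u)\,dx$ immediately before the Proposition and leaves the four specializations as standard computations, which is precisely what you carry out (Hessian decomposition for radial $\phi$, collapse via $|x\cdot\nabla u|^2=r^2|\nabla u|^2$ for radial $u$, and the block-diagonal cylindrical case). No gaps; your substitution in (i) recovering $8G(u)$ and the identities $\Delta\phi=\Delta_{\bar x}\psi+2$, $\Delta^2\phi=\Delta_{\bar x}^2\psi$ in (iv) are all as in the paper.
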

\noindent We are now able to give a proof of the blow-up results of Theorem \ref{thm:blowup}.

\begin{proof}[Proof of Theorem \ref{thm:blowup}] By using the above virial identities, we prove $(i),(ii),(iii)$ in order. \\
\noindent \emph{Proof of $(i)$.} The first point is proved by employing a usual Glassey-type argument, once we have that $G(u(t))\leq-\delta<0$ for any $t\in(-T_{min},T_{max}),$ see Lemma \ref{lem:2.2}. Indeed, by \eqref{eq:variance} we have 
\[
V_{|x|^2}^{\prime\prime}(t)\leq-\delta,
\]
and a convexity argument gives the desired conclusion.\\

\noindent \emph{Proof of $(ii).$} For the second point, concerning radial solutions, let $\theta: [0,\infty) \to [0,2]$ be a $C^\infty_c$ function satisfying
\[
\theta(r)= \left\{
\begin{array}{ccl}
2 &\text{if}& 0 \leq r \leq 1, \\
0 &\text{if}& r\geq 2.
\end{array}
\right.
\]
We define the function $\Theta:[0,\infty) \to \R^+\cup\{0\}$ by
\begin{equation*} \label{defi-vartheta}
\Theta(r):= \int_0^r \int_0^\tau \theta(s) ds d\tau.
\end{equation*}
For $R>0$, we define the radial function $\varphi_R: \R^3 \to \R$ by means of a rescaling of $\Theta:$
\begin{equation*} \label{defi-varphi-R}
\phi_R(x)=\phi_R(r):= R^2 \Theta(r/R), \quad r=|x|.
\end{equation*}
It is straightforward to check that $\forall x \in \R^3$ and $\forall r\geq 0$,
\[
2\geq \phi''_R(r) \geq 0, \qquad 2-\frac{\phi'_R(r)}{r} \geq 0, \qquad 6-\Delta \phi_R(x) \geq 0.
\]
With the above properties and \eqref{eq:vir-rad}, we get 
	\begin{equation} \label{viri-est-rad}
	V_{\phi_R}^{\prime\prime}(t) \lesssim -1
	\end{equation}	
Indeed, by \eqref{eq:vir-rad}, and using that 
\begin{equation}\label{ide8}
8\left( G(u)-K(u)-\frac{3}{4}\int n_{p,q}(u)dx \right)=0,
\end{equation}
see \eqref{eq:G}, we obtain 	
\begin{align*}
V_{\phi_R}^{\prime\prime}(t)&= 8 G(u) - 8 K(u) - 6\int n_{p,q}(u)dx \\
&-\int \Delta^2 \phi_R(x)|u|^2 dx + 4 \int \phi''_R(r) |\nabla u|^2 dx \\
& +\int \Delta \phi_R(x) n_{p,q}(u)dx \\
&= 8 G(u) -\int \Delta^2 \phi_R(x) |u|^2  dx \\
& - 4 \int (2-\phi''_R(r))|\nabla u|^2 dx  -\int (6-\Delta \phi_R(x)) n_{p,q}(u)dx.
\end{align*}
As $\|\Delta^2 \phi_R\|_{L^\infty} \lesssim R^{-2}$, the conservation of mass implies that
\[
\left| \int \Delta^2 \phi_R(x)|u|^2 dx \right| \lesssim R^{-2}.
\]
The latter, together with $\phi''_R (r) \leq 2$ for all $r\geq0$, $\|\Delta \phi_R\|_{L^\infty} \lesssim 1$, and $\phi_R(x) = |x|^2$ on $|x| \leq R$,  yield
\[
V_{\phi_R}^{\prime\prime}(t) \leq 8 G(u) + CR^{-2} + C\int_{|x|\geq R}\left(|u|^{q+1}+|u|^{p+1}\right)dx.
\]
To estimate the last term, we recall the following radial Sobolev embedding (see e.g., \cite{CO}): for a radial function $f\in H^1(\R^3)$, we have  
\begin{align} \label{rad-sobo}
\sup_{x \ne 0} |x| |f(x)| \leq C\|\nabla f\|^{\frac{1}{2}}_{L^2} \|f\|^{\frac{1}{2}}_{L^2}.
\end{align}
Thanks to \eqref{rad-sobo} and the conservation of mass, we estimate
\begin{align*}
\int_{|x|\geq R} |u(x)|^{q+1}dx &=\int_{|x|\geq R} |u|^2|u|^{q-1}dx\lesssim \left( R^{-1}\|\nabla u\|_{L^2}^{1/2}\|u\|_{L^2}^{1/2}\right)^{q-1} \|u\|_{L^2}^2\\
&\lesssim R^{-(q-1)}\|\nabla u\|_{L^2}^{(q-1)/2},
\end{align*}
and similarly 
\begin{align*}
\int_{|x|\geq R} |u(x)|^{p+1}dx &=\int |u|^2|u|^{p-1}dx\lesssim \left( R^{-1}\|\nabla u\|_{L^2}^{1/2}\|u\|_{L^2}^{1/2}\right)^{p-1} \|u\|_{L^2}^2\\
&\lesssim R^{-(p-1)}\|\nabla u\|_{L^2}^{(p-1)/2}.
\end{align*}
It follows that
\[
V_{\phi_R}^{\prime\prime}(t) \leq 8 G(u) + CR^{-2} +CR^{-(q-1)}\left(\|\nabla u(t)\|^{(p-1)/2}_{L^2}+\|\nabla u(t)\|^{(q-1)/2}_{L^2}\right),
\]
and by using the Young inequality, observing that $\frac{q-1}{2}<\frac{p-1}{2}<2,$ we can infer that 
\[
V_{\phi_R}^{\prime\prime}(t) \leq 8 G(u(t)) + o_R(1) +o_R(1)\|\nabla u(t)\|^2_{L^2}
\]
Hence, \eqref{viri-est-rad} follows by combining the last estimate above with Lemma \ref{lem:2.2}, Lemma \ref{lem:2.3}, and Corollary \ref{coro1}.
\\

\noindent \emph{Proof of $(iii)$.} For last third point, which deals with cylindrically symmetric solutions, a refined  argument of the proof for $(ii)$ is used. It is worth mentioning that the first early work for the classical focusing NLS equation in anisotropic spaces goes back to Martel \cite{Mar}. See \cite{ADF, BF20, DF} for recent results for other classes of dispersive equations.

From \eqref{eq:vir-cyl}, and  using again the identity \eqref{ide8},
 \begin{align*}
	V_{\phi_R}^{\prime\prime}(t)  &= -\int \Delta^2_{\bar x} \psi_R(\bar x) |u|^2 dx + 4\int \psi''_R(\rho) |\nabla_{\bar x} u|^2 dx \\
	&+ 8 \|\partial_{x_3} u\|^2_{L^2} +\int(2+ \Delta_{\bar x} \psi_R(\bar x)) n_{p,q}(u)dx,\\
	&=-\int \Delta^2_{\bar x} \psi_R(\bar x) |u|^2 dx - 4\int (2-\psi''_R(\rho)) |\nabla_{\bar x} u|^2 dx \\
	&+ 8 K(u)+ \int (2+\Delta_{\bar x} \psi_R(\bar x)) n_{p,q}(u)dx\pm 6\int n_{p,q}(u),\\
	&=8G(u)-\int \Delta^2_{\bar x} \psi_R(\bar x) |u|^2 dx - 4\int (2-\psi''_R(\rho)) |\nabla_{\bar x} u|^2 dx \\
	&-\int (4-\Delta_{\bar x} \psi_R(\bar x)) n_{p,q}(u)dx
	\end{align*}
	where $\rho=|\bar x|$. Similarly to the point $(ii),$ we introduce the function $\Theta:[0,\infty) \to \R^+ \cup\{0\}$ defined by
\begin{equation*} 
\Theta(\rho):= \int_0^\rho \int_0^\tau \theta(s) ds d\tau.
\end{equation*}
For $R>0$, we define the function $\psi_R: \R^2 \to \R$ by
\begin{equation*} \label{defi-varphi-R-bis}
\psi_R(\bar x)=\psi_R(\rho):= R^2 \Theta(\rho/R), \quad \rho=|\bar x|.
\end{equation*}
Since $\psi''_R(\rho) \leq 2$ and $\|\Delta_{\bar x}^2\psi_R\|_{L^\infty}\lesssim R^{-2},$ 
it follows that
\begin{equation}\label{eq:est-vir-cyl}
V_{\phi_R}^{\prime\prime}(t)\leq 8 G(u) +CR^{-2}-\int (4-\Delta_{\bar x}\psi_R(\bar x)) n_{p,q}(u)dx.
\end{equation}
Note that in the last estimate, the contribution given by the lower order term $|u|^{q+1}$ shall be discarded, as it accounts for a nonpositive contribution, hence  \eqref{eq:est-vir-cyl} can be refined as 
\begin{equation}\label{eq:est-vir-cyl2}
V_{\phi_R}^{\prime\prime}(t)\leq 8 G(u(t)) +CR^{-2}-C\int (4-\Delta_y\psi_R(y)) |u|^{p+1}dx.
\end{equation}

\noindent Observe that the  function
\[
h_R(\rho):=4-\Delta_{\bar x}\psi_R(\bar x)=4-\Theta^{\prime\prime}(\rho/R)-\frac R\rho\Theta^\prime(\rho/R)=4-\theta(\rho/R)-\frac R\rho\int_0^{\rho/R}\theta(s)ds
\] 
is a localization function outside a cylinder. Indeed, we note that by its construction $\Delta_{\bar x}\psi_R(\bar x)=4$ for any $\rho\leq R,$ hence $h_R(\rho)$  is a nonnegative, bounded, smooth function supported outside $\{\rho\geq R\}.$  Let us consider the last term in the right-hand side of \eqref{eq:est-vir-cyl2}:
\[
\int (4-\Delta_{\bar x}\psi_R(\bar x))|u|^{p+1}dx= \int h_R(\rho) |u|^{p+1}dx.
\]
We restrict to the range $p\in\left(\frac73,3\right].$  By using the H\"older inequality, we estimate 
\begin{equation}\label{eq:est:caze}
\begin{aligned}
\int h_R(\rho)|u|^{p+1}dx&=\int_{\R}\left(\int_{\R^2} h_R(\rho)|u|^{p-1}|u|^2d\bar x\right)dx_3\leq \int  \|h_R^{1/(p-1)}u\|_{L^\infty_{(|\bar x|\geq R)}}^{p-1}\|u\|_{L^2_{\bar x}}^2dx_3\\
&\lesssim \left( \int  \|h_R^{1/(p-1)}u\|_{L^\infty_{(|\bar x|\geq R)}}^{2}dx_3\right)^{(p-1)/2}\left( \int \|u\|_{L^2_{\bar x}}^{4/(3-p)}dx_3\right)^{(3-p)/2}.
\end{aligned}
\end{equation}

\noindent By the Gagliardo-Nirenberg inequality and the conservation of mass, we bound 
\begin{equation}\label{eq:est:L2}
\left( \int \|u\|_{L^2_{\bar x}}^{4/(3-p)}dx_3\right)^{(3-p)/2}\lesssim\|\nabla u\|_{L^2}^{(p-1)/2}.
\end{equation}
Indeed, by recalling that for a one-dimensional function  $f\in H^1(\R)$
\[
\|f\|_{\sigma}\lesssim\|f^\prime\|_{L^2}^{\theta}\|f\|_{L^2}^{1-\theta},
\]
for $\sigma\in(2,\infty)$ and $\theta=(\sigma-2)/2\sigma,$ we obtain
\[
\begin{aligned}
\left( \int \|u\|_{L^2_{\bar x}}^{4/(3-p)}dx_3\right)^{(3-p)/2}&=\|\|u\|_{L^2_{\bar x}}\|_{L_{x_3}^{4/(3-p)}}^2\lesssim\|\partial_{x_3}\|u\|_{L^2_{\bar x}}\|_{L^2_{x_3}}^{(p-1)/2}\|\|u\|_{L^2_{\bar x}}\|_{L^2_{x_3}}^{(5-p)/2}\\
&\lesssim\|\partial_{x_3} u\|_{L^2}^{(p-1)/2}\lesssim \|\nabla u\|_{L^2}^{(p-1)/2},
\end{aligned}
\]
where we used $\sigma=4/(3-p)$ and $\theta=(p-1)/4,$ and the conservation of mass in the uniform constant. The term
\[
\int  \|h_R^{1/(p-1)}u\|_{L^\infty_{(|\bar x|\geq R)}}^{2}dx_3
\]
is estimated as in our previous paper, see \cite[eq. (4.13)]{BF20}. Briefly, by using the boundedness of $h_R$ and a version of \eqref{rad-sobo} in $\R^2,$
\[
\int  \|h_R^{1/(p-1)}u\|_{L^\infty_{(|\bar x|\geq R)}}^{2}dx_3\lesssim R^{-1}\|\nabla u\|_{L^2},
\]
hence, 
\begin{equation}\label{eq:grad:loc}
\left(\int  \|h_R^{1/(p-1)}u\|_{L^\infty_{(|\bar x|\geq R)}}^{2}dx_3\right)^{(p-1)/2}\lesssim R^{-(p-1)/2}\|\nabla u\|^{(p-1)/2}_{L^2}.
\end{equation}
Thus, by glueing up together \eqref{eq:est:caze}, \eqref{eq:est:L2}, and \eqref{eq:grad:loc}, we obtain
\[
\int h_R(\rho)|u|^{p+1}dx\lesssim  R^{-(p-1)/2} \|\nabla u\|_{L^2}^{(p-1)}.
\]
The specific case $p=3$ is estimated in \cite{BF20}, so we omit the details. Note just that, instead of controlling $\left( \int \|u\|_{L^2_{\bar x}}^{4/(3-p)}dx_3\right)^{(3-p)/2},$ we have to deal with $\|\|u\|_{L^2_{\bar x}}\|_{L^\infty_{x_3}}^2.$ 
\\

\noindent The proof of the Theorem \ref{thm:blowup} is concluded. 
\end{proof}

\subsection*{Acknowledgement.} The  authors thank D. Ruiz for having pointed-out the references \cite{DS,DSW}. J.B. was partially supported by ``Problemi stazionari e di evoluzione nelle equazioni di campo nonlineari dispersive'' of GNAMPA 2020, and the project ``Dinamica di equazioni nonlineari dispersive'' by FONDAZIONE DI SARDEGNA 2016.
V.G. was supported in part by  Project 2017 ``Problemi stazionari e di evoluzione nelle equazioni di campo nonlineari'' of INDAM,
GNAMPA,
by the Institute of Mathematics and Informatics,
Bulgarian Academy of Sciences, by the Top Global University Project, Waseda University,  and by the University of Pisa, Project PRA 2018 49, and the project ``Dinamica di equazioni nonlineari dispersive'' by FONDAZIONE DI SARDEGNA 2016.

\end{document}